\definecolor{mycolor}{RGB}{0, 0, 0}
\definecolor{mycolor2}{RGB}{0, 0, 0}
\definecolor{mycolor3}{RGB}{0, 0, 0}
\begin{document}
\title{A Non-commutative Cryptosystem Based on Quaternion Algebras
}


\author{
Khadijeh Bagheri       \and
  Mohammad-Reza Sadeghi \and
  Daniel Panario }


\institute{K. Bagheri  \at
              Faculty of Mathematics and Computer Science, \\
              Amirkabir University of Technology, Tehran, Iran\\
               \email{kbagheri@aut.ac.ir}        
             \and
           M.-R. Sadeghi \at
              Faculty of Mathematics and Computer Science, \\
              Amirkabir University of Technology, Tehran, Iran\\
              \email{msadeghi@aut.ac.ir}           
              \and
              D. Panario \at
             School of Mathematics and Statistics,\\
              Carleton University, Ottawa, Canada\\
              \email{daniel@math.carleton.ca}
}

\maketitle
\begin{abstract}
We  propose BQTRU, a  non-commutative NTRU-like cryptosystem  over  quaternion
algebras.  This  cryptosystem  uses  bivariate  polynomials as  the  underling  ring.
The multiplication  operation  in  our  cryptosystem  can  be  performed  with  high  speed  using quaternions  algebras  over finite  rings.  As a consequence,  the
key  generation  and  encryption  process  of  our  cryptosystem  is  faster  than  NTRU  in
comparable  parameters.  Typically  using  Strassen's  method,  the  key  generation  and
encryption  process  is  approximately  $16/7$  times  faster  than  NTRU  for  an  equivalent
parameter set. Moreover,  the BQTRU  lattice has a hybrid  structure that makes  inefficient
standard  lattice attacks  on  the  private  key. This entails a higher computational complexity for attackers providing the opportunity of having \textcolor{mycolor}{smaller key sizes}. Consequently, in this sense,  BQTRU  is  more  \textcolor{mycolor}{resistant  than  NTRU  against known attacks} at  an equivalent  parameter  set. Moreover, message  protection  is  feasible  through  larger
polynomials and  this allows us to  obtain  the  same  security  level  as other NTRU-like cryptosystems but using lower
dimensions.

\keywords{Quaternion algebras; Public key cryptosystem; Lattice based cryptosystem; NTRU.}
 \subclass{11T71\and 94A60}
\end{abstract}
\section{Introduction}
\noindent
The NTRU  public key cryptosystem was proposed by Hoffstein et al. in 1996 \cite{NTRU}. The first version of this public key cryptosystem was standardized and published within IEEE P1363.1 \cite{IEEE}. Its security is based on the hardness of finding the shortest vector problem (SVP) and the closest vector problem (CVP) in a particular type of lattices, the convolutional modular lattices \cite{May}.  Furthermore,  these two lattice problems are known to be NP-hard problems \cite{CVP}, \cite{SVP}. Thus,  NTRU  is a candidate for quantum-resistant public key cryptosystems.

The NTRU cryptosystem has been shown to be faster than RSA and elliptic curve cryptosystems \cite{speed}. Indeed, the linearity of lattice operations offers speed advantages over cryptosystems based on other known hard problems. Considering computational efficiency and low cost of implementation, NTRU is one of the fastest public key cryptosystems \cite{speed}.

NTRU is based on the polynomial ring $R=\mathbb{Z}[x]/\langle x^n -1 \rangle$. Coppersmith and Shamir \cite{Coppersmith} present a lattice attack against NTRU that finds the private key from its public key by solving the SVP on the NTRU lattice. They suggest that a non-commutative algebra  can be considered for the underlying algebra to avoid these attacks. Several non-commutative proposals have been introduced and, consequently, have been broken.  Gaborit et al. \cite{CTRU} introduce CTRU as an analogue to NTRU where the coefficients of polynomials are from $\mathbb{F}_2$ instead of $\mathbb{Z}$. Kouzmenko \cite{NTRUG} shows that CTRU  can be broken using a  polynomial time attack. Generalizations of NTRU over the Dedekind domains including $\mathbb{Z}[i]$, $\mathbb{Z}[\sqrt{-2}]$, $\mathbb{Z}[\zeta_3]$ and $\mathbb{Z}[\zeta_5] $, where $\zeta_3$ and $\zeta_5$ are $3$rd and $5$th roots of unity, are presented in \cite{ETRU} and \cite{NTRUZ} . A non-commutative version of NTRU over the non-commutative ring of $k\times k$ matrices of polynomials in the ring $R$ is also available in the literature \cite{MaTRU}. Another non-commutative version of NTRU, called QTRU,  is addressed  by Malekian et al. over quaternion algebras \cite{QTRU}, \cite{QTRU1}. The computational processes of  QTRU  have high complexity and at equivalent parameters set, QTRU is slower that NTRU.
In a different direction,  Caboara et al. \cite{GB-NTRU} increase the number of variables in the polynomial ring  $R$,  instead of operating in a different coefficient ring. \textcolor{mycolor2}{Moreover, they introduce an ideal of $R$ in their scheme so to complicate the attacking scenarios of NTRU-like cryptosystem. However, this scheme is inefficient due to the high complexity of the key generation, the encryption and the decryption algorithms. An improvement of this work is presented for bilinear polynomial rings in \cite{Orsini},  which is secure against the lattice based attack and has an efficient encryption algorithm but its key generation and decryption algorithm are still not efficient enough to be practical.}
\textcolor{mycolor2}{The proposed method in this paper, inspired by \cite{Orsini} and \cite{QTRU}, leads to an efficient NTRU-like cryptosystem that is secure against lattice based attacks in smaller dimensions.
}

There are other advantages and practical potentialities when using non-commutative structures \textcolor{mycolor}{like quaternions as the underlying ring of NTRU. On the one hand,} many applications in image processing are concerned with signals and images that have three or four dimensional samples. Quaternions are useful tools for modeling and analyzing such signals that arise very naturally in the physical world from the three dimensions of physical space to trichromatic nature of human color vision. More generally, the four dimensions of quaternions can be used to represent a most general set of geometric operations in three dimensions using homogenous coordinates. The reader is pointed to the recent book \cite{signal processing} that shows quaternions usages in signal and image processing. Given this number of applications of quaternions, looking at scenarios that require secure data transmission of a source that generates vector-valued samples, leads us to design a fast cryptosystem based on quaternions. \textcolor{mycolor}{On the other hand, under} special conditions,  quaternion algebras over a field $\mathbb{F}$ are isomorphic to $M_2(\mathbb{F})$, the ring of $2\times2$ matrices over $\mathbb{F}$. In this case, the multiplication of two elements of these quaternion algebras  is equivalent to a multiplication of their corresponding  $2\times2$ matrices over $\mathbb{F}$. \textcolor{mycolor}{Strassen shows} that  two  $2\times2$  matrices  can  be  multiplied using  only  seven multiplications and some additions \cite{Strassen}. Thus, any quaternion multiplication can be done with $7$ multiplications over  $\mathbb{F}$ instead of $16$. \textcolor{mycolor}{Therefore, if we use the special case of quaternion algebra in a cryptosystem, it leads to low complexity encryption and decryption algorithms due to the high speed operations in this algebra.} This also motivated us to present an extensions of NTRU cryptosystem over quaternions.

In this paper, we introduce \textcolor{mycolor}{an NTRU-like} cryptosystem called BQTRU. We change the underlying algebraic structure of NTRU to a quaternion algebra over the \textcolor{mycolor2}{bivariate polynomial} ring $ R'=\mathbb{Z}[x, y]/\langle x^n -1,y^n -1 \rangle$.
In  BQTRU,  four messages from four different sources are encrypted  at the same time; this is useful in multiuser applications. \textcolor{mycolor}{Moreover,} the processing in the BQTRU cryptosystem is fast thanks to several quaternion properties.

\textcolor{mycolor2}{Similar to \cite{Orsini} and \cite{GB-NTRU}, we use an ideal of $ R'$ when designing the BQTRU cryptosystem to enhance the security of our proposed NTRU-like cryptosystem against lattice attacks. Therefore,  the BQTRU  lattice also has a hybrid  structure that makes  inefficient standard lattice attacks on the  private  key.} Hence, the lattice based attacks on our private key have higher complexity than other NTRU-like cryptosystems. \textcolor{mycolor}{As a consequence of some special properties, BQTRU is more resistant} than other NTRU-like cryptosystems against lattice based attacks in smaller dimensions. Typically, we can obtain  the same security level of other NTRU-like cryptosystems but here in lower dimensions. \textcolor{mycolor}{This high complexity of lattice based attacks opens the possibility of smaller key sizes and more message protection with respect to NTRU.}

The rest of the paper is organized as follows. In Section \ref{preliminaries}, the original presentation of the NTRU cryptosystem and  required mathematical background about quaternion algebras are revisited.  The design of the proposed cryptosystem is presented in Section \ref{QTWO}.  In Section \ref{Successful Decryption}, we present the choice of  parameters
for BQTRU which makes the probability of  decryption failure to be close to negligible.   In Section \ref{Lattice attacks}, the security analysis of the proposed  cryptosystem  against  known attacks on  both  the  public  key   and  the  message are discussed. Using the parameters presented in the previous section, the  size  of the key space and message space of  BQTRU is compared with  NTRU and QTRU.
We also design a lattice based attack on BQTRU and  we show that BQTRU is resistant against this attack.
Conclusions are given  in Section \ref{concolusion}.
\section{Preliminaries}\label{preliminaries}
\subsection{The NTRU cryptosystem} \label{NTRU}
In this section, we briefly describe \textcolor{mycolor}{the original} NTRU public key cryptosystem; for more information, see \cite{book}, \cite{NTRU}. We follow the presentation in \cite{NTRU}. We start by fixing an integer $n > 1$ and two coprime moduli $p$ and $q $ with $q\gg p$. Commonly $n$ is chosen to be prime and $p$ is chosen to be $3$. Let $R=\mathbb{Z}[x]/\langle x^n -1 \rangle$ be the ring of truncated polynomials with convolutional multiplication denoted by the symbol $*$. In this ring, the addition of two polynomials is defined by the pairwise addition of the coefficients.
We consider the rings $R_{q}=\mathbb{Z}_q[x]/\langle x^n -1 \rangle$ and $R_{p}=\mathbb{Z}_p[x]/\langle x^n -1 \rangle$ and the subsets $L_{f}$, $L_{g}$, $L_{r}$ and $L_{m}$ of $R$. The set $L_{f}$ (respectively, $L_{g}$ and $L_{r}$) contains ternary polynomials, \textcolor{mycolor}{that is polynomials with coefficients $ +1$ and $-1$,} in $R$ with $2d_{f}$ (respectively, $2d_{g}$ and $2d_{r}$ ) monomials with coefficients $\pm1$, and the set $L_{m}$ contains polynomials in $R$  whose coefficients are between $-(p-1)/2$ and $(p-1)/2$. In NTRU the usual choice is $d_{f}=d_{g}=d_{r}=d$ approximately of size $n/3$.
\textcolor{mycolor}{ For correctness of the decryption algorithm in NTRU, we concentrate on polynomials of small norm, compared to the modulus $q$. We use the term ``small'' polynomial to indicate polynomials in $R$ having small norm  compared to the modulus $q$. Specially in the original NTRU, small polynomials are those ternary polynomials such that many coefficients are zero and the rest ($2d\approx 2n/3$) are in the set $\{1, -1\}$.} The usage of these type of polynomials is key for NTRU security.

Using public parameters $n, p$ and $q$, NTRU key generation selects two small random polynomials $f \in L_{f}$ and $g \in L_{g}$ such that $f$ is invertible in $R_{q}$ and $R_{p}$. The inverses of $f$ in $R_{q}$ and $R_{p}$ are denoted by $f^{-1}_{q}$ and $f^{-1}_{p}$, respectively.

 The public key is $h=f^{-1}_{q} \ast g \pmod q$.

 The private key is $(f, g)$.

 To encrypt a message $m$ from the set of messages $L_{m}$, we pick a random polynomial  $r \in L_{r}$ and compute
\begin{eqnarray*}
c=ph \ast r+m \;\;(\mbox{mod } q).
\end{eqnarray*}

To decrypt a ciphertext $c$, we use the private key $f$ and compute
\begin{eqnarray*}
e= f\ast c= f\ast(ph \ast r+m)
=  pf\ast (f^{-1}_{q}\ast g)\ast r+f\ast m = pg\ast r+f\ast m \;\;(\mbox{mod } q).
\end{eqnarray*}
We can ensure that all of the coefficients of $pg\ast r+f\ast m$ fall in the interval  $[-\frac{q-1}{2}, \frac{q-1}{2}]$ by a careful choice of parameters. Thus, $e \in R_{q}$ is equal to $ pg\ast r+f\ast m  \in R$ and the message $m$ is obtained from $e$ by reducing the coefficients of $f^{-1}_{p}\ast e $ modulo $p$. The public parameters $(n,p,q)$ of NTRU are chosen such that the probability of decryption failure is minimized.
If $q >(6d+ 1)p$, then the decryption process never fails \cite{NTRU}.

There are several known attacks on NTRU which are classified in two types of attacks:  combinatorial and lattice based attacks. The combinatorial attacks are easily avoided by a careful choice of parameters \textcolor{mycolor}{\cite{MITM}}; the lattice based attacks are the most significant attacks to date and lead to the more recent developments of NTRU  \cite{Coppersmith}. Coppersmith and Shamir introduced the NTRU lattice as a submodule $\mathcal{L}$ of $R\times R$ generated by $(q,0)$ and $(1,h)$. When we represent the elements of $R$ by their coefficient vectors in $\mathbb{Z}^n$, the submodule $\mathcal{L}$
can also be viewed as a $2n$-dimensional lattice $L_{\textrm{NTRU}}$ with row basis matrix
$$\left[
                       \begin{array}{cc}
                        qI  & 0 \\
                        \mathcal{H}  & I
                       \end{array}
                     \right],$$
 where $\mathcal{H}$ is a circulant $n\times n$ matrix defined by the coefficient vector of $h$ as first row, and each additional row is obtained from the one above it by cyclic shifting one position to the right.
 An attempt at finding the private key $(f,g)$ leads to solving SVP in $L_{\textrm{NTRU}}$ \cite{Coppersmith}, while an attempt at obtaining the message $m$ leads to finding CVP in $L_{\textrm{NTRU}}$ \cite{NTRU}.
 Lattice basis reduction algorithms like LLL \cite{LLL} and BKZ  \cite{BKZ} (or BKZ 2.0  \cite{BKZ2}) can find SVP in an  $n$ dimensional lattice $\Lambda\subseteq \mathbb{R}^n$ with length up to a factor of at most $2^{O(n)}$ the size of a shortest vector in $\Lambda$. Therefore, when $n$  gets  sufficiently large  the algorithms  fail  to  determine a suitable result for SVP. Thus the dimension of the lattice must be large enough in order to have a reasonable level of security in the cryptosystem.
\subsection{Quaternion algebra}~\label{Quaternion algebras}
In this section, we briefly review quaternion algebras which contain the underlying algebra of our proposed  cryptosystem as a special case. Hamilton discovered the real quaternion algebra $\mathbb{H}$ in 1843. It is known that the only real normed division algebras are  $\mathbb{R}$, $\mathbb{C}$, $\mathbb{H}$ and the Octonions $\mathbb{O}$ \cite{Conway}, where the normed division algebras are not necessarily commutative or associative. Among the above algebras, $\mathbb{H}$ is the only one that is associative and non-commutative. The real quaternions  can be considered as a vector space of dimension 4 over $\mathbb{R}$ with an ordered basis $\{1, i, j, k\}$ as follows
$$\mathbb{H}=\{ s_0 +s_1 i+s_2j+s_3 k ~|~ s_0, s_1, s_2, s_3 \in \mathbb{R}\}.$$
The addition of two quaternions and the scalar multiplication are defined by component-wise vector addition and scalar multiplication.
The multiplication of two quaternions denoted by $\circ$ is defined by the following rules:
\begin{eqnarray*}
i^2 =j^2=k^2=-1,\quad i\circ j=-j\circ i=k, \quad j\circ k=-k\circ j=i, \quad k\circ i=-i\circ k =j.
\end{eqnarray*}
We  define quaternion algebras over general fields whose characteristic are not $2$. Some of the basic properties of these algebras are derived using straightforward computational arguments.
\begin{definition}
Let $a$ and $b$ be non-zero elements of a field $\mathbb{F}$. The quaternion algebra $\mathbb{A}$ over $\mathbb{F}$ is a four
dimensional $\mathbb{F}$-space with basis $1$, $i$, $j$ and $k$, where the bilinear multiplication is defined by the conditions that $1$ is a unity element, and
\begin{eqnarray*}
i^2=a, \quad j^2=b,\quad i\circ j=-j\circ i=k.
\end{eqnarray*}
\end{definition}
Therefore
$$ k^2=-ab, \quad j\circ k=-k\circ j=-ib, \quad k\circ i=-i\circ k =-ja$$
and $\mathbb{A}$ is an associative algebra over $\mathbb{F}$.

For each quaternion $s= s_0 +s_1 i+s_2j+s_3 k$ in $\mathbb{A}$, its conjugate is given by $\bar{s}= s_0 -s_1 i-s_2j-s_3 k$, and its norm is defined by $N(s)=s\circ \bar{s}=\bar{s} \circ s=s_0^{2}-as_1^{2}-bs_2^{2}+abs_3^{2}$. A quaternion is a \emph{unit quaternion} if its norm is 1. The inverse of the quaternion $s$ is given by $s^{-1}=\bar{s}/N(s)=\bar{s} \circ (N(s))^{-1}$ provided that its norm is nonzero.

The  constructed algebra is denoted by $\Big(\frac{a, b}{\mathbb{F}}\Big)$ as a (generalized) quaternion algebra over $\mathbb{F}$. Hamilton's quaternions occur as the special case $\mathbb{H}=\Big(\frac{-1, -1}{\mathbb{R}}\Big)$.  It is well known that  $\Big(\frac{a, b}{\mathbb{F}}\Big)$ is a four-dimensional associative algebra with central field  $\mathbb{F}$.

A quaternion algebra over $\mathbb{F}$ is either a division algebra or else is isomorphic to $M_2(\mathbb{F})$. In \cite{quaternion isomorphism}, it is shown that for any field  $\mathbb{F}$,
$$M_2(\mathbb{F})\cong\Big(\frac{1, 1}{\mathbb{F}}\Big),$$
where the isomorphism is induced by
$$i= \left[ \begin{array}{cc}
                        1  & 0 \\
                       0  & -1
                       \end{array}\right],\qquad
 j= \left[ \begin{array}{cc}
                        0  & 1 \\
                       1  & 0
                       \end{array}\right].$$
The  \emph{Lipschitz} quaternions or \emph{integral quaternions}  $\mathbb{L}=\Big(\frac{a, b}{\mathbb{Z}}\Big)$
form a subring of the real quaternions $\mathbb{H}$. The algebra $\mathbb{L}_{q}=\Big(\frac{a, b}{\mathbb{F}_q}\Big)$ is defined over the finite field $\mathbb{F}_q$, where $q$ is  a prime number.

In general, the quaternion algebra $\mathbb{A}$ can be defined over any commutative ring $R$ with identity.
 In this paper, we define the quaternion algebra over  the polynomial ring  $R'=\mathbb{Z}[x, y]/\langle x^{n}-1, y^{n}-1 \rangle $, where we can perform arithmetic operations efficiently, low complexity and low memory requirements.

When we work with fields like the complex numbers, we have efficient algorithms for solving the equations with coefficients belonging to this field. In the quaternions, some problems like root finding algorithms, even solving a quadratic equation with coefficients belonging to $\mathbb{H}$, become nontrivial. Kalantari in  \cite{Kalantari}  suggests a new and simple approach for computing the solutions of a quadratic polynomial in $\mathbb{H}[x]$. We remark that even quadratic equations where the variable and the coefficients may not commute are fairly complicated to solve.

Factoring  a polynomial in $R[x]$ where $R$ is noncommutative does not preserve evaluation, that is, given $P, E, H \in R[x]$, the evaluation of the polynomials at $r \in R$ is not a ring homomorphism. That is,
\begin{center}
$P(x)=E(x)H(x)$ does not imply $P(r)=E(r)H(r)$.
\end{center}
\begin{example}
Consider $P(x)=(x-i)(x-j) \in \mathbb{H}[x]$. We might expect $i$ and $j$ to be roots of $P$. However, $P(x)= x^2-(i+j)x+k$. and hence
\begin{eqnarray*}
P(i)&=&i^2 -(i+j)\circ i+k= -1-i^2-i\circ j+k=2k\neq 0,\\
P(j)&=&j^2 -(i+j)\circ j+k= -1-i\circ j -j^2+k=0.
\end{eqnarray*}
Therefore, $j$ is a (right) root of $P$, but $i$ is not.
\end{example}
\begin{definition}[\cite{Kalantari}]
Two quaternions $s$ and $s'$ are congruent or equivalent, denoted by $s\sim s'$, if for some
quaternion $w \neq 0$ we have, $s' = w\circ s\circ w^{-1}$. The congruence class of $s = s_0 +s_1 i+s_2j+s_3 k$, denoted by $[s]$, is the set
\begin{eqnarray*}
[s]=\left\{s'\in \mathbb{H} ~|~ s'\sim s\right\}.
\end{eqnarray*}
\end{definition}
It is proved in \cite{Kalantari} that $[s]=\left\{ s_0+x_1 i+x_2 j+x_3 k ~|~  x_1^2+x_2^2+x_3^2=s_1^2+s_2^2+s_3^2 \right \}$.
 \begin{definition}[\cite{Kalantari}]
 Let $R$ be a noncommutative ring. An element $r\in R$ is a right root of a nonzero polynomial $P \in R[x]$ if and only if
$P(x) = E(x)(x-r)$ for some polynomial $E \in R[x].$
 \end{definition}
 \begin{theorem}[\cite{Kalantari}]
 Let $D$ be a division ring, and let $P(x)=E(x)H(x)$ with $P, E, H \in D[x]$. An element $d \in D$ is a root of $P$ if and only if  either $d$ is a root of $H$, or a conjugate of $d$ is a root of $E$.
 \end{theorem}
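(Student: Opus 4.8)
The plan is to reduce everything to one noncommutative division-with-remainder identity in $D[x]$ and then to a conjugation-twisted product rule for evaluation. First I would record the (right) division algorithm: for every $P(x)=\sum_{\nu}a_\nu x^\nu\in D[x]$ and every $d\in D$ there is a $Q\in D[x]$ with $P(x)=Q(x)(x-d)+P(d)$, where $P(d):=\sum_\nu a_\nu d^\nu$ is the (right) evaluation. This comes from the telescoping identity $x^\nu-d^\nu=\big(\sum_{i=0}^{\nu-1}x^{\,i}d^{\,\nu-1-i}\big)(x-d)$, valid precisely because $x$ is central in $D[x]$; summing $a_\nu(x^\nu-d^\nu)$ over $\nu$ gives $P(x)-P(d)=Q(x)(x-d)$. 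Two consequences are worth isolating: (i) $d$ is a right root of $P$ in the sense of the definition above (i.e. $(x-d)$ is a right factor of $P$) if and only if $P(d)=0$, which is what makes the statement meaningful; and (ii) every right multiple $A(x)(x-d)$ evaluates to $0$ at $d$.

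The main step is the evaluation rule for a product. Writing $H(x)=Q(x)(x-d)+H(d)$ and left-multiplying by $E(x)$ gives $P(x)=E(x)H(x)=\big(E(x)Q(x)\big)(x-d)+E(x)\,H(d)$; by (ii) the first summand vanishes at $d$, so $P(d)$ is the value at $x=d$ of $E(x)\,H(d)$. If $H(d)=0$ this is $0$. If $c:=H(d)\neq 0$, then, using again that $x$ is central so that the constant $c$ may be moved past powers of $x$ but not past the coefficients $a_\nu$, one gets $E(x)\,c=\sum_\nu(a_\nu c)x^\nu$, whose value at $d$ is $\sum_\nu a_\nu c\,d^\nu=\big(\sum_\nu a_\nu(c\,d\,c^{-1})^\nu\big)c=E\big(c\,d\,c^{-1}\big)\,c$, where $(c\,d\,c^{-1})^\nu=c\,d^\nu c^{-1}$ telescopes. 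In summary,
\begin{equation*}
P(d)=\begin{cases}0,& H(d)=0,\\[2pt] E\big(H(d)\,d\,H(d)^{-1}\big)\,H(d),& H(d)\neq 0.\end{cases}
\end{equation*}

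From this the theorem follows. If $d$ is a root of $P$, then either $H(d)=0$, i.e. $d$ is a root of $H$, or $H(d)\neq 0$ and, since $H(d)$ is invertible, $E\big(H(d)\,d\,H(d)^{-1}\big)=0$, i.e. the conjugate $H(d)\,d\,H(d)^{-1}$ of $d$ is a root of $E$. Conversely, if $d$ is a root of $H$ the first line of the formula gives $P(d)=0$, and if $H(d)\neq 0$ with $H(d)\,d\,H(d)^{-1}$ a root of $E$ the second line gives $P(d)=0\cdot H(d)=0$. I would flag that the conjugate produced here is forced to be the explicit element $H(d)\,d\,H(d)^{-1}$; this is not cosmetic, since — as the example preceding the theorem shows — evaluation is not a ring homomorphism and a polynomial need not vanish on an entire conjugacy class, so it is really this distinguished conjugate that enters the equivalence.

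The one place that genuinely needs care is the bookkeeping in the product rule: one must resist writing $P(d)=E(d)H(d)$, keep track that $H(d)$ commutes with powers of $x$ but not with the coefficients of $E$, and recognise that it is exactly this asymmetry that generates the conjugation twist $c(\cdot)c^{-1}$. Everything else — the telescoping division identity and a change of summation index — I would leave as routine.
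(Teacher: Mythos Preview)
Your argument is correct and is in fact the standard proof of this result (the Gordon--Motzkin lemma; see also Lam's \emph{A First Course in Noncommutative Rings}). The paper itself does not prove the theorem at all: it is quoted verbatim from \cite{Kalantari} as background, so there is no ``paper's own proof'' to compare against. Your derivation of the product rule
\[
P(d)=\begin{cases}0,& H(d)=0,\\ E\big(H(d)\,d\,H(d)^{-1}\big)\,H(d),& H(d)\neq 0,\end{cases}
\]
via right division $H(x)=Q(x)(x-d)+H(d)$ and the conjugation identity $c\,d^{\nu}=(c\,d\,c^{-1})^{\nu}c$ is exactly how this is done in the literature.

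Your closing remark is not merely a stylistic caveat but an actual correction of the theorem as stated: the converse direction is \emph{false} for an arbitrary conjugate. For instance, over $\mathbb{H}$ take $E(x)=x-j$, $H(x)=1$, $P(x)=x-j$; then $j$ is a conjugate of $i$ and a root of $E$, yet $P(i)=i-j\neq 0$. What is true, and what your formula proves, is the equivalence with the \emph{specific} conjugate $H(d)\,d\,H(d)^{-1}$. So the theorem should really read ``\ldots or the conjugate $H(d)\,d\,H(d)^{-1}$ of $d$ is a root of $E$'', and your proof establishes precisely that sharpened statement.
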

The fact that the evaluation map over the quaternions does not define a ring homomorphism from $\mathbb{H}[x]$ to $\mathbb{H}$, is a source of difficulty in polynomial root-finding over the quaternions. On the other  hand, in contrast with real or complex polynomials, a quaternion polynomial can have infinitely many roots or it could have fewer roots than its degree without multiplicity. This arises from the following theorem.
 \begin{theorem}[\cite{Gordon}]\label{infinitely root}
 Let $D$ be a division ring. A polynomial in $D[x]$ of degree $m$ \textcolor{mycolor}{can have infinitely many roots}, however these roots come from at most $m$ distinct conjugacy classes of $D$.
 \end{theorem}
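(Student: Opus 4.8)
The plan is to treat the two assertions separately: first that the degree bound cannot prevent infinitely many roots (so an example is needed), and then the structural statement that all roots of a degree-$m$ polynomial fall into at most $m$ conjugacy classes. The second part is the one with actual content, and it will follow by induction on $m=\deg P$ from the factorization theorem for division rings already quoted above (the one asserting that $d$ is a root of $P=E(x)H(x)$ iff $d$ is a root of $H$ or some conjugate of $d$ is a root of $E$).

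For the ``infinitely many roots'' claim I would simply exhibit $P(x)=x^{2}+1\in\mathbb{H}[x]$. For a pure quaternion $u=s_{1}i+s_{2}j+s_{3}k$ one has $\bar u=-u$, hence $N(u)=u\circ\bar u=-u^{2}$, so $u^{2}=-N(u)$; every $u$ with $s_{1}^{2}+s_{2}^{2}+s_{3}^{2}=1$ therefore satisfies $P(u)=0$. This is an entire $2$-sphere of roots, and by the description of congruence classes recalled above all of them lie in the single class $[i]$. Thus a polynomial of degree $m=2$ can indeed have infinitely many roots, and they occupy only one conjugacy class, consistently with the bound $\le m$.

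For the bound I would induct on $m$. If $m=0$ then $P$ is a nonzero constant with no roots, and zero classes is $\le 0$. For $m\ge 1$: if $P$ has no root the claim is vacuous, so fix a root $r$. Right division by the monic polynomial $x-r$ is legitimate in $D[x]$, giving $P(x)=E(x)(x-r)+\rho$ with $\rho\in D$; the identity $x^{k}-r^{k}=(x^{k-1}+x^{k-2}r+\cdots+r^{k-1})(x-r)$ (coefficients kept on the left) shows $\rho=P(r)=0$, so $P(x)=E(x)(x-r)$ with $\deg E=m-1$. Now apply the quoted division-ring factorization theorem with $H(x)=x-r$: a root $d$ of $P$ either is a root of $H$, i.e. $d=r$ so $[d]=[r]$, or some conjugate $d'$ of $d$ is a root of $E$, in which case $[d]=[d']$ and by the induction hypothesis $[d']$ is one of at most $m-1$ conjugacy classes (those containing the roots of $E$). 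Hence every root of $P$ lies in $[r]$ or in one of those $\le m-1$ classes, so in at most $m$ classes.

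The only place requiring care is the noncommutative bookkeeping: writing the remainder theorem with coefficients on the left so that ``right root'' coincides with ``$x-r$ is a right factor'' (matching the definition used earlier in the paper), and observing that ``a conjugate of $d$ is a root of $E$'' already suffices because congruence is an equivalence relation, so passing to a conjugate leaves the conjugacy class unchanged. Granting the already-stated factorization theorem, everything else is the clean induction above.
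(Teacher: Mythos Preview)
The paper does not supply its own proof of this theorem: it is stated with a citation to Gordon--Motzkin and immediately followed by unrelated material, so there is nothing in the paper to compare against. Your argument is therefore strictly more than what the paper does, and it is correct. The example $x^{2}+1\in\mathbb{H}[x]$ is the standard one for the first assertion, and your induction for the second assertion is precisely the classical Gordon--Motzkin argument: factor out a right root using the (noncommutative) remainder theorem, then invoke the division-ring factorization theorem already quoted in the paper to see that every other root lies in a conjugacy class meeting the roots of the degree-$(m-1)$ quotient. The only mild comment is that the phrase ``coefficients kept on the left'' could be sharpened by saying that $x$ is assumed central in $D[x]$, which is what makes the telescoping identity $x^{k}-r^{k}=(x^{k-1}+x^{k-2}r+\cdots+r^{k-1})(x-r)$ work; this is the usual convention and is implicit in the paper's setup, so nothing is actually missing.
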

A method has been presented for approximating the zeros of a real quaternion polynomial in \cite{Kalantari}. Moreover,  a simple method for constructing a quaternion polynomial with prescribed zeros and  a decomposition theorem for a quaternion polynomial in $\mathbb{H}[x]$  is also described in \cite{Kalantari}.

The proposed cryptosystem in this paper requires the study of quaternion algebras over commutative ring $R'=\mathbb{Z}[x, y]/\langle x^{n}-1, y^{n}-1 \rangle $. Factorization, decomposition  and classification of zeros of a quaternion polynomial in this ring have not been addressed in the literature  as far we know.

One of the important  structures in our cryptosystem is the ideal $J$ of  $\mathbb{A}$, quaternion algebra over $R'$, which we introduce in the next section.
The key generation process involves finding the inverse of a polynomial $F=f_0+ f_1 i+f_2 j+f_3 k \in \mathbb{A}$ modulo the ideal $J$ which requires an efficient division algorithm in general. Due to the lack of such algorithm in the literature, we consider a special form for ideal $J$ that makes the existence of a quaternion division algorithm inessential for us. \textcolor{mycolor}{We consider $J=Q+Q i+Q j+Q k$, where the ideal  $Q$ is a central ideal of  $\mathbb{A}$, that is $Q\subseteq Z(\mathbb{A})$,} where $Z(\mathbb{A})$ is the center of  $\mathbb{A}$, that is, the set of all elements in $\mathbb{A}$ that commute with every element. The construction method of $Q$ and $J$ is also presented in the next section. It is related to the choice of a set containing $n$-th roots of unity, roots of the polynomial $x^{n}-1$ in $\mathbb{A}$. Our construction method for ideal $J$ considers only central roots of $x^{n}-1$. \textcolor{mycolor}{If this restriction could be removed and non-central roots in the construction of $J$ could be used, then Theorem \ref{infinitely root} would provide enhanced security to our system by improving  the resistance to a brute force attack on the private ideal $J$.}
\section{BQTRU: a cryptosystem based on  bivariate polynomials and quaternions}~\label{QTWO}
In this section, we introduce our cryptosystem by using the quaternion algebras as its underlying algebraic structure.
Operations in this cryptosystem are performed in the ring of bivariate polynomials $ R'=\mathbb{Z}[x, y]/\langle x^{n_1}-1, y^{n_2}-1 \rangle$. \textcolor{mycolor}{It is possible to take two different exponents $n_1$ and $n_2$, however for simplicity we consider $n_1=n_2=n$ in the rest of the paper.}
We consider an order on monomials, like lexicographical order,  embedding every polynomial $f \in R'$ in $\mathbb{Z}^{n^2}$. This embedding maps a polynomial $f \in R'$  into a vector $\overrightarrow{f}=(f_{0}, f_{1},\ldots ,f_{n^2-1}) \in \mathbb{Z}^{n^2}$.

We choose the public parameters $(n , p, q)$ similar to \textcolor{mycolor}{ the original NTRU: $n$ is a prime number,} $p$ and $q$ are two different prime numbers such that $\gcd(p, q)=\gcd(n, q)=1$ where $q \gg p$ and  $L_{f}, L_{g},  L_{\phi}$ and $L_{m}$ are subsets of $R'$ \textcolor{mycolor}{that are defined similar to their corresponding subsets in NTRU. In the proposed cryptosystem, the integers $d_f$, $d_g$, $d_\phi$ and $d_m$  associated to these subsets are considered such that $d_f=d_g=d_\phi=d_m=d\approx n^2 /7$.} An additional condition is the choice of $n$ and $q$ such that $n |(q-1)$.

We choose an ideal $Q_{q}$ of $ R'_{q}=\mathbb{Z}_q[x, y]/\langle x^n-1, y^n-1\rangle$ and use this ideal for key generation and decryption in our cryptosystem. In the next section, we show that the use of this ideal leads to applying a different lattice for decryption.
\textcolor{mycolor}{Since $\langle x^n-1, y^n-1\rangle \subset Q_{q}$,} therefore $Q_{q}$ corresponds to the intersection of ideals of the form $\langle x-a, y-b\rangle$ where $(a, b)$ belongs to the set $E=\{(a, b)\in \mathbb{Z}_q\times \mathbb{Z}_q ~|~a^n=1,  b^n=1 \}$. \textcolor{mycolor}{ Let $T \subset E$ be the set of points for which there is at least one polynomial in $Q_{q}$ with no roots from $T$.}
Then, it is clear that the ideal $Q_{q}$ is equal to the set of polynomials that vanish on $E\backslash T$.

\textcolor{mycolor}{The condition $n |(q-1)$ yields the factorization of  $x^n -1$ into linear factors in $\mathbb{Z}_q$. Then, $E$ is a set of elements of $\mathbb{Z}_q \times \mathbb{Z}_q$ with cardinality  $n^2$.}

\textcolor{mycolor}{Obviously, $ R'_{q}$ is a vector space of dimension $n^{2}$ over $\mathbb{Z}_{q}$ where we can consider the Lagrange basis with Lagrange interpolants defined for each point $(a, b)$ of $E$ by
 \begin{eqnarray}\label{landa}
\lambda_{a, b}(x,y)=\frac{ab(x^{n}-1)(y^{n}-1)}{n^{2}(x-a)(y-b)}.
\end{eqnarray}
Any polynomial $f(x, y)$ of $R'_{q}$ can be identified by its values over $E$ as \cite{interpolation}
\begin{equation}\label{linear combination}
f(x, y)=\sum_{(a,b) \in E} {f(a, b) \ast \lambda_{a, b}(x, y)}.
\end{equation}
For each $(a, b) \in E$, the Lagrange interpolant $\lambda_{a, b}(x,y)$  is a polynomial that vanishes on $E$ except at $(a, b)$ where its value is equal to 1. Moreover, every $\lambda_{a, b}(x,y)$ have the property that for each $\alpha \in R'_{q} $
\begin{equation}\label{lambda}
\alpha(x, y)\ast \lambda_{a, b}(x,y)=\alpha(a, b) \ast \lambda_{a, b}(x,y).
\end{equation}
In this way,  when we see  $ R'_{q}$ as  a vector space over $\mathbb{Z}_{q}$, then $Q_{q}$ can be consider as a vector subspace of $ R'_{q}$. Since any polynomial $\rho \in Q_{q}$ vanishes outside of $T$, by considering (\ref{linear combination}), we have $\rho(x, y)=\sum_{(a,b) \in T}\rho(a, b) \ast \lambda_{a, b}(x,y)$.   Without loss of generality, we assume that the first $|T|$ elements of the set $E$ are exactly the points of $T$.
Hence,  each  $\rho \in Q_{q}$ is written as a linear combination of  $\{\lambda_{a_i, b_i}(x, y)\}_{i=1}^{|T|}$ and this set can be a basis of $Q_{q}$ as a vector subspace of $ R'_{q}$}.

On the other hand, $Q_{q}$,  as an ideal, can be generated by
\begin{equation} \label{sigma}
\sigma(x,y)=\sum\nolimits_{i=1}^{|T|} q_i\lambda_{a_i, b_i}(x,y),
\end{equation}
where
\textcolor{mycolor}{ for $i=1, \ldots, |T| $, $q_i $ is a nonzero element of ${\mathbb{Z}_q}$  chosen at random},
or given by any other polynomial which does not vanish on $T$.
We observe that $\sigma(x,y)\ast \lambda_{a_i, b_i}(x, y)(q_i)^{-1}=\lambda_{a_i, b_i}(x, y)$ for $i=0,\ldots, |T|$, therefore the ideal generated by $\sigma(x,y)$ contains our basis for the vector space  $Q_{q}$. Consequently, we can consider $Q_{q}=\langle\sigma(x, y)\rangle$. In this way, the ideal $Q=\langle q, \sigma(x, y)\rangle$ of $R'$ has \textcolor{mycolor}{ $\{q\}\cup \{\lambda_{a_i, b_i}(x, y)\}_{i=1}^{|T|}$} as its generators, where $T\subseteq E$.

In our cryptosystem, we consider the quaternion algebras $\mathbb{A}$, $\mathbb{A}_p$ and $\mathbb{A}_q$ over the rings $R'=\mathbb{Z}[x, y]/\langle x^n-1, y^n-1\rangle$, $R'_p=\mathbb{Z}_p[x, y]/\langle x^n-1, y^n-1\rangle$ and $R'_q=\mathbb{Z}_q [x, y]/\langle x^n-1, y^n-1\rangle$, respectively:
\begin{align*}
\mathbb{A} &= \Big(\frac{1, 1}{R'}\Big)=\big\{ f_0 +f_1 i+f_2j+f_3 k ~\big|~ f_0, f_1, f_2, f_3 \in R';\;\; i^2 =j^2=1, \;\; i\circ j=-j\circ i=k\big\},\\
\mathbb{A}_p &= \Big(\frac{1, 1}{R'_p}\Big)=\big\{ f_0+ f_1 i+f_2 j+f_3 k  ~\big|~  f_0,f_1,f_2,f_3 \in R'_p;\;\; i^2 =j^2=1, \;\; i\circ j=-j\circ i=k\big\},\\
\mathbb{A}_q &= \Big(\frac{1, 1}{R'_q}\Big)=\big\{ f_0+ f_1 i+f_2 j+f_3 k  ~\big|~  f_0,f_1,f_2,f_3 \in R'_q;\;\; i^2 =j^2=1, \;\; i\circ j=-j\circ i=k \big\},
\end{align*}
which are isomorphic to $M_2(R')$, $M_2(R'_p)$ and $M_2(R'_q)$, respectively. These quaternion algebras possess the properties seen in Section \ref{preliminaries}, except that some nonzero elements might have zero norm and then such elements do not have a multiplicative inverse \cite{Conway}.

The multiplication of two quaternions $F$ and $G$ of these quaternion algebras is equivalent to a multiplication of two  $2\times2$-matrices over their underlying ring. For example,  a quaternion $F=f_0+ f_1 i+f_2 j+f_3 k\in \mathbb{A}$ is isomorphic to a $2\times2$-matrix in $M_2(R')$ with generators
$$i= \left[ \begin{array}{cc}
                        1  & 0 \\
                       0  & -1
                       \end{array}\right],\qquad
 j= \left[ \begin{array}{cc}
                        0  & 1 \\
                       1  & 0
                       \end{array}\right].$$
Therefore, since $ij=k$, we obtain
\begin{eqnarray*}
F=f_0+ f_1 i+f_2 j+f_3 k&\cong& f_0\left[ \begin{array}{cc}
                        1  & 0 \\
                       0  & 1
                       \end{array}\right] +f_1 \left[ \begin{array}{cc}
                        1  & 0 \\
                       0  & -1
                       \end{array}\right]+f_2\left[ \begin{array}{cc}
                        0  & 1 \\
                       1  & 0
                       \end{array}\right]+f_3 \left[ \begin{array}{cc}
                        0  & 1 \\
                       -1  & 0
                       \end{array}\right]\\
                       &\cong& \left[ \begin{array}{cc}
                        f_0+f_1  & \quad f_2+f_3 \\
                        f_2-f_3  & \quad f_0-f_1
                       \end{array}\right].
\end{eqnarray*}
Strassen  \cite{Strassen} shows  that  two  $2\times2$  matrices  can  be  multiplied using  only  seven multiplications.
An improvement on Strassen's algorithm for $2\times2$  matrices is given by Winograd \cite{Winograd}. More information about Strassen's algorithm and its variants is available in \cite{shokrollahi} (page 11-13).
Hence, the multiplication of two quaternions $F$ and $G$ of $\mathbb{A}$ is equivalent to a multiplication of their equivalent  $2\times2$-matrices over the the ring $R'$. Thus, this multiplication is possible with 7 convolutional multiplications in $R'$ and some additional sums.

\textcolor{mycolor}{In the following we present some lemmas and a theorem that are needed for the key generation of our cryptosystem. The proof of some lemmas is trivial and they are left to the reader.   Without loss of generality, we assume that the points are ordered such that the first $|T|$ points are exactly the points of the subset $T$.}
\begin{lemma}\label{lemma3}
Let $\rho:\mathbb{A}_q\rightarrow (\mathbb{L}_q)^{n^2}$ be a map defined by
 \begin{eqnarray*}
\rho(F)&=& (F(a_1, b_1), F(a_2, b_2), \ldots, F(a_{n^2}, b_{n^2})),
\end{eqnarray*}
where $(a_i, b_i)\in E$ and \textcolor{mycolor}{$\mathbb{L}_{q}=\Big(\frac{1, 1}{\mathbb{Z}_q}\Big)$} is the algebra of quaternions over \textcolor{mycolor}{$\mathbb{Z}_q$.}
Then the following properties hold
\begin{eqnarray*}
 \rho(F+G)&=&\rho(F)+\rho(G),\\
\rho(F\circ G)&=&\rho(F)\bullet \rho(G),
\end{eqnarray*}
where $$\rho(F)\bullet \rho(G)=(F(a_1, b_1)\circ G(a_1, b_1), F(a_2, b_2)\circ G(a_2, b_2), \ldots, F(a_{n^2}, b_{n^2})\circ G(a_{n^2}, b_{n^2})).$$
\end{lemma}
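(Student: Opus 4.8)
The plan is to verify both identities by evaluating quaternions coordinate-wise at the points of $E$ and reducing to the corresponding facts in the base ring $R'_q$. Recall that every quaternion $F \in \mathbb{A}_q$ has the form $F = f_0 + f_1 i + f_2 j + f_3 k$ with each $f_\ell \in R'_q$, and that evaluation at a point $(a,b)$ means replacing each coefficient polynomial $f_\ell(x,y)$ by the scalar $f_\ell(a,b) \in \mathbb{Z}_q$; this produces a quaternion $F(a,b) = f_0(a,b) + f_1(a,b) i + f_2(a,b) j + f_3(a,b) k \in \mathbb{L}_q$. The map $\rho$ is then just the tuple of these $n^2$ evaluations, one for each $(a_i,b_i) \in E$.

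First I would handle additivity. Since addition in $\mathbb{A}_q$ is componentwise in the four coefficients and addition in $R'_q$ is the pairwise addition of polynomial coefficients, the scalar $(f_\ell + g_\ell)(a_i, b_i)$ equals $f_\ell(a_i,b_i) + g_\ell(a_i,b_i)$ for each $\ell$ and each $i$ — this is simply the statement that evaluation at a point is additive on $R'_q$, which is immediate because $E$ consists of common roots of $x^n-1$ and $y^n-1$, so the reduction modulo $\langle x^n-1, y^n-1\rangle$ is compatible with the evaluation. Assembling the four coordinates back into a quaternion gives $(F+G)(a_i,b_i) = F(a_i,b_i) + G(a_i,b_i)$ in $\mathbb{L}_q$, and taking the tuple over all $i$ yields $\rho(F+G) = \rho(F) + \rho(G)$.

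Next I would handle multiplicativity, which is the only step with any real content. Write $F \circ G$ using the quaternion multiplication rules $i^2 = j^2 = 1$, $i\circ j = -j \circ i = k$; its four coefficient polynomials are fixed $\mathbb{Z}$-bilinear expressions in the $f_\ell$ and $g_\ell$ built from sums of products $f_\ell \ast g_m$ in $R'_q$ (for instance the scalar part is $f_0 \ast g_0 + f_1 \ast g_1 + f_2 \ast g_2 - f_3 \ast g_3$, and similarly for the $i$, $j$, $k$ parts). The key point is that evaluation at $(a_i,b_i) \in E$ is a $\mathbb{Z}_q$-algebra homomorphism $R'_q \to \mathbb{Z}_q$: it respects both the convolution product $\ast$ and addition, precisely because $(a_i,b_i)$ is a root of both $x^n - 1$ and $y^n - 1$ so the relations defining $R'_q$ are killed. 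Therefore $(f_\ell \ast g_m)(a_i,b_i) = f_\ell(a_i,b_i)\, g_m(a_i,b_i)$, and substituting into the four bilinear formulas for the coefficients of $F\circ G$ shows that the quaternion $(F\circ G)(a_i,b_i)$ is obtained from $F(a_i,b_i)$ and $G(a_i,b_i)$ by exactly the same bilinear combinations — that is, $(F\circ G)(a_i,b_i) = F(a_i,b_i) \circ G(a_i,b_i)$ in $\mathbb{L}_q$, using that $\mathbb{L}_q = \left(\frac{1,1}{\mathbb{Z}_q}\right)$ has the identical multiplication table. Taking the tuple over $i = 1, \ldots, n^2$ gives $\rho(F \circ G) = \rho(F) \bullet \rho(G)$ by the definition of $\bullet$.

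The main obstacle, such as it is, is purely bookkeeping: one must be careful that the componentwise product $\bullet$ on $(\mathbb{L}_q)^{n^2}$ uses the quaternion product $\circ$ in each slot (not some other product), and that the sign conventions in the expansion of $F \circ G$ match those in $\mathbb{L}_q$; both hold because $\mathbb{A}_q$ and $\mathbb{L}_q$ are defined with the same parameters $a = b = 1$. No division or invertibility is needed here, so the subtlety about zero-norm elements noted earlier in the paper does not intervene. I would present the proof compactly: state that evaluation at each $(a_i,b_i)$ is a ring homomorphism $R'_q \to \mathbb{Z}_q$ (valid since $a_i^n = b_i^n = 1$), extend it coordinate-wise to an $\mathbb{F}_q$-algebra homomorphism $\mathbb{A}_q \to \mathbb{L}_q$, and observe that $\rho$ is the product of these $n^2$ homomorphisms, from which both claimed identities follow at once.
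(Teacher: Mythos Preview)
Your proof is correct, but it follows a different route from the paper's. You argue directly that evaluation at each $(a_i,b_i)\in E$ is a $\mathbb{Z}_q$-algebra homomorphism $R'_q\to\mathbb{Z}_q$ (since $a_i^n=b_i^n=1$), then extend this componentwise to a ring homomorphism $\mathbb{A}_q\to\mathbb{L}_q$, and read off both identities from the fact that $\rho$ is the product of these maps. The paper instead works inside $\mathbb{A}_q$ using the Lagrange interpolation expansion $F=\sum_i F(a_i,b_i)\circ\lambda_{a_i,b_i}$ together with the projector property $G\circ\lambda_{a_i,b_i}=G(a_i,b_i)\circ\lambda_{a_i,b_i}$ (Equation~(\ref{lambda})) and the centrality of the $\lambda_{a_i,b_i}$ to compute $F\circ G=\sum_i\big(F(a_i,b_i)\circ G(a_i,b_i)\big)\circ\lambda_{a_i,b_i}$, from which $\rho(F\circ G)=\rho(F)\bullet\rho(G)$ follows. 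Your argument is the more standard and self-contained one, needing no interpolation machinery; the paper's version has the virtue of exercising the Lagrange-basis identities (\ref{linear combination}) and (\ref{lambda}) that are used repeatedly in the rest of the construction, so it reinforces that framework even if it is slightly less direct for this lemma alone.
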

\begin{proof}
Considering Equation (\ref{linear combination}) and (\ref{lambda}), the following relationship holds:
\begin{eqnarray*}
F(x, y)&=&\sum_{i=1}^{n^2}F(a_i,b_i)\circ \lambda_{a_i,b_i}(x,y),\\
F\circ G&=&\Big(\sum_{i=1}^{n^2}F(a_i,b_i)\circ \lambda_{a_i,b_i}(x,y)\Big)\circ G
=\sum_{i=1}^{n^2}F(a_i,b_i)\circ G \circ\lambda_{a_i,b_i}(x,y)\\
&=&\sum_{i=1}^{n^2}\big( F(a_i,b_i)\circ G(a_i,b_i)\big)\circ \lambda_{a_i,b_i}(x,y).
\end{eqnarray*}
\textcolor{mycolor}{We observed that since $\lambda_{a_i,b_i}(x,y)\in \mathbb{Z}_q[x,y]$ commutes with $G$, the second equation holds.}
Therefore,
\begin{eqnarray*}
\rho(F\circ G)&=&(F(a_1, b_1)\circ G(a_1, b_1), F(a_2, b_2)\circ G(a_2, b_2), \ldots, F(a_{n^2}, b_{n^2})\circ G(a_{n^2}, b_{n^2}))\\
&=&\rho(F)\bullet \rho(G).
\end{eqnarray*}
$\hfill \square $
\end{proof}
 \begin{lemma}\label{lemma2}
Let $\mathbb{A}_q=R'_q+R'_q i+R'_q j+R'_q k$ and let $\mathbb{B}_q=\mathbb{Z}_q[x,y]+\mathbb{Z}_q[x,y] i + \mathbb{Z}_q[x,y] j+ \mathbb{Z}_q[x,y] k$. Then
$$\mathbb{A}_q\cong \dfrac{\mathbb{B}_q}{\langle x^n -1,y^n -1\rangle} =\dfrac{\mathbb{Z}_q[x,y]+\mathbb{Z}_q[x,y] i + \mathbb{Z}_q[x,y] j+ \mathbb{Z}_q[x,y] k}{\langle x^n -1,y^n -1\rangle}.$$
\end{lemma}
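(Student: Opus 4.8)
\textit{Proof proposal.} The plan is to produce an explicit surjective ring homomorphism from $\mathbb{B}_q$ onto $\mathbb{A}_q$, compute its kernel, and finish with the first isomorphism theorem. First I would let $\pi\colon\mathbb{Z}_q[x,y]\to R'_q=\mathbb{Z}_q[x,y]/\langle x^n-1,y^n-1\rangle$ be the canonical quotient map and define
\[
\Phi\colon\mathbb{B}_q\to\mathbb{A}_q,\qquad \Phi(f_0+f_1 i+f_2 j+f_3 k)=\pi(f_0)+\pi(f_1)i+\pi(f_2)j+\pi(f_3)k,
\]
so that $\Phi$ simply applies $\pi$ coefficient-wise. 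Additivity of $\Phi$ is immediate. For multiplicativity, the point is that a product in $\mathbb{B}_q$ and a product in $\mathbb{A}_q$ are computed from the four coefficients by the same universal bilinear formulas, because the structure constants ($i^2=j^2=1$, $i\circ j=-j\circ i=k$ and the derived relations) are the fixed elements $\pm1$ in both algebras; since $\pi$ is a ring homomorphism on coefficients it commutes with these formulas, giving $\Phi(F\circ G)=\Phi(F)\circ\Phi(G)$, and $\Phi(1)=1$. Surjectivity of $\Phi$ follows at once from surjectivity of $\pi$.

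Next I would identify the kernel. By construction,
\[
\ker\Phi=\{\,f_0+f_1 i+f_2 j+f_3 k\in\mathbb{B}_q : f_s\in\langle x^n-1,y^n-1\rangle_{\mathbb{Z}_q[x,y]}\ \text{for all }s\,\},
\]
the set of quaternions all of whose components lie in the ideal $\langle x^n-1,y^n-1\rangle$ of $\mathbb{Z}_q[x,y]$. It then remains to check that this set coincides with the two-sided ideal of $\mathbb{B}_q$ generated by $x^n-1$ and $y^n-1$ (which is what $\langle x^n-1,y^n-1\rangle$ denotes in the denominator of the statement). This is the one genuine step: since $x^n-1$ and $y^n-1$ are scalars in the coefficient ring, they lie in the center of $\mathbb{B}_q$, so the two-sided ideal they generate is just $(x^n-1)\mathbb{B}_q+(y^n-1)\mathbb{B}_q$. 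Expanding a general element of this ideal shows each of its four quaternion components lies in $\langle x^n-1,y^n-1\rangle_{\mathbb{Z}_q[x,y]}$, and conversely any quaternion all of whose components lie in that ideal is visibly of this form; hence the two descriptions of $\ker\Phi$ agree.

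Finally, the first isomorphism theorem for (not necessarily commutative) rings yields $\mathbb{B}_q/\ker\Phi\cong\mathbb{A}_q$, which is precisely the claimed isomorphism. I expect the only step needing care to be the kernel identification, specifically the verification that centrality of $x^n-1,y^n-1$ makes the two-sided ideal equal to the ``component-wise'' ideal above; everything else is routine bookkeeping. Equivalently, one may observe that $\mathbb{B}_q\cong\mathbb{Z}_q[x,y]\otimes_{\mathbb{Z}_q}\mathbb{L}_q$ as rings, with $\mathbb{L}_q=\Big(\frac{1, 1}{\mathbb{Z}_q}\Big)$, under which $\Phi$ becomes $\pi\otimes\mathrm{id}$ and $\langle x^n-1,y^n-1\rangle$ becomes $\langle x^n-1,y^n-1\rangle_{\mathbb{Z}_q[x,y]}\otimes_{\mathbb{Z}_q}\mathbb{L}_q$, so the quotient is $R'_q\otimes_{\mathbb{Z}_q}\mathbb{L}_q=\mathbb{A}_q$.
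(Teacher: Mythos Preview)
Your proof is correct; the paper itself omits any argument for this lemma, stating beforehand that ``the proof of some lemmas is trivial and they are left to the reader.'' Your approach via the first isomorphism theorem, with the coefficient-wise quotient map $\Phi$ and the centrality of $x^n-1,y^n-1$ to identify the kernel, is exactly the natural way to fill this gap and requires no changes.
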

\begin{lemma}
Let  $J_q=Q_q+Q_q i+Q_q j+Q_q k$, where $Q_q$ is the ideal of $R'_q$ generated by $\sigma=\sum\nolimits_{i=1}^{|T|} q_i \lambda_{a_i, b_i}(x,y)$.
Then $J_q$ is an ideal of $\mathbb{A}_q$ and is generated by $\sigma$. Furthermore, $J_q$ contains the ideal $\langle x^n -1,y^n -1 \rangle$ of $\mathbb{B}_q$.
\end{lemma}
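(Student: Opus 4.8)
The plan is to verify the three assertions in turn; each reduces to the single fact that $Q_q$ is an ideal of the \emph{commutative} ring $R'_q$, together with the observation that $\sigma$, being an element of $R'_q$, lies in the center $Z(\mathbb{A}_q)$ and hence commutes with $i$, $j$, $k$.

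First I would check that $J_q$ is a two-sided ideal of $\mathbb{A}_q$. Closure under addition is immediate because $Q_q$ is closed under addition. For absorption, take an arbitrary $F=f_0+f_1 i+f_2 j+f_3 k\in\mathbb{A}_q$ and $\rho=\rho_0+\rho_1 i+\rho_2 j+\rho_3 k$ with $\rho_0,\ldots,\rho_3\in Q_q$. Expanding $F\circ\rho$ by bilinearity and the rules $i^2=j^2=1$, $i\circ j=-j\circ i=k$, each of the four components of $F\circ\rho$ (its coefficients of $1$, $i$, $j$, $k$) is a signed sum of products $f_s\ast\rho_t$; since $\rho_t\in Q_q$ and $Q_q$ is an ideal of $R'_q$, every such product, hence every component, lies in $Q_q$. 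Therefore $F\circ\rho\in J_q$, and the identical computation with the factors interchanged gives $\rho\circ F\in J_q$, so $J_q$ is a two-sided ideal.

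Next I would show $J_q=\langle\sigma\rangle$. Putting $\rho_0=\sigma$, $\rho_1=\rho_2=\rho_3=0$ shows $\sigma\in J_q$, so the two-sided ideal generated by $\sigma$ in $\mathbb{A}_q$ is contained in $J_q$. Conversely, let $\rho=\rho_0+\rho_1 i+\rho_2 j+\rho_3 k\in J_q$. Since $Q_q=\langle\sigma\rangle$ in the commutative ring $R'_q$, we may write $\rho_t=\alpha_t\ast\sigma$ with $\alpha_t\in R'_q$ for each $t$. As $\sigma\in R'_q$ commutes with $i$, $j$, $k$, associativity gives $\rho=(\alpha_0+\alpha_1 i+\alpha_2 j+\alpha_3 k)\circ\sigma$, an element of the ideal generated by $\sigma$. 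Hence $J_q=\langle\sigma\rangle$.

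Finally, for the containment of $\langle x^n-1,y^n-1\rangle$, read inside $\mathbb{B}_q$: by Lemma~\ref{lemma2} the canonical map $\pi\colon\mathbb{B}_q\to\mathbb{A}_q$ is a surjective ring homomorphism with kernel $\langle x^n-1,y^n-1\rangle$, so $\pi^{-1}(J_q)$ is an ideal of $\mathbb{B}_q$ that automatically contains $\ker\pi$. Concretely, since $x^n-1$ and $y^n-1$ are zero in $R'_q$ they belong to $Q_q$, so the preimage of $Q_q$ in $\mathbb{Z}_q[x,y]$ equals $\langle\tilde\sigma,x^n-1,y^n-1\rangle$ for any polynomial lift $\tilde\sigma$ of $\sigma$, and carrying this through the four quaternion coordinates yields the claim. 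I do not foresee a genuine obstacle; the only points needing care are the sign bookkeeping in the quaternion multiplication table when proving absorption, and making explicit — via the isomorphism of Lemma~\ref{lemma2} — the sense in which the ideal $J_q\subseteq\mathbb{A}_q$ is regarded as containing the ideal $\langle x^n-1,y^n-1\rangle$ of $\mathbb{B}_q$.
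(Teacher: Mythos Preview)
Your proof is correct. The paper does not actually prove this lemma --- it is one of those whose ``proof \ldots\ is trivial and \ldots\ left to the reader'' --- so there is no authorial argument to compare against; your verification of absorption via the ideal property of $Q_q$ in the commutative ring $R'_q$, the generation claim via centrality of $\sigma$, and the containment via the quotient description of Lemma~\ref{lemma2} is exactly the routine check the authors had in mind.
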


In a similar way, we obtain the following lemma.
\begin{lemma}
Let $J=Q+Q i+Q j+Q k$, where $Q=\langle\sigma, q\rangle$ is an ideal of $R'$.  Then,  $ J$ is an ideal of $\mathbb{A}=R'+R' i+R' j+R' k$ and is generated by $q$ and $\sigma$, that is $J=\langle q, \sigma\rangle_{\mathbb{A}}$.
\end{lemma}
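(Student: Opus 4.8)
The plan is to mirror the argument already used for $J_q$ in the preceding lemma, transporting it from $\mathbb{A}_q$ and $Q_q$ to $\mathbb{A}$ and $Q=\langle q,\sigma\rangle$. The one structural fact I need is that the coefficient ring $R'$ lies in the center of $\mathbb{A}$: since the quaternion multiplication on $\mathbb{A}=\Big(\frac{1,1}{R'}\Big)$ is $R'$-bilinear and $1,i,j,k$ form an $R'$-basis, every element of $R'$ commutes with $i,j,k$ and hence with all of $\mathbb{A}$. In particular $q,\sigma\in R'\subseteq Z(\mathbb{A})$.

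First I would note that $J=Q+Qi+Qj+Qk$ is an additive subgroup of $\mathbb{A}$, which is immediate because $Q$ is an additive subgroup of $R'$ and addition in $\mathbb{A}$ is componentwise. Next I would verify the two-sided absorption property. Writing a generic element of $J$ as $\rho=q_0+q_1 i+q_2 j+q_3 k$ with $q_0,q_1,q_2,q_3\in Q$ and a generic element of $\mathbb{A}$ as $F=f_0+f_1 i+f_2 j+f_3 k$, the products $F\circ\rho$ and $\rho\circ F$ expand, via the table $i^2=j^2=1$, $i\circ j=-j\circ i=k$ (and the derived relations), into quaternions each of whose four coordinates is an $R'$-linear combination of $q_0,\dots,q_3$. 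Since $Q$ is an ideal of $R'$, every such coordinate lies in $Q$, so $F\circ\rho,\ \rho\circ F\in J$, and $J$ is a two-sided ideal of $\mathbb{A}$.

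It then remains to identify $J$ with $\langle q,\sigma\rangle_{\mathbb{A}}$. For $\langle q,\sigma\rangle_{\mathbb{A}}\subseteq J$, taking $q_1=q_2=q_3=0$ shows $Q\subseteq J$, hence $q,\sigma\in J$, and since $J$ is an ideal the two-sided ideal they generate is contained in $J$. For the reverse inclusion, take $\rho=q_0+q_1 i+q_2 j+q_3 k\in J$; since $Q=\langle q,\sigma\rangle$ in $R'$, write $q_\ell=a_\ell q+b_\ell\sigma$ with $a_\ell,b_\ell\in R'$ for $\ell=0,1,2,3$. Because $q$ and $\sigma$ are central, collecting terms gives $\rho=\big(a_0+a_1 i+a_2 j+a_3 k\big)\,q+\big(b_0+b_1 i+b_2 j+b_3 k\big)\,\sigma\in\langle q,\sigma\rangle_{\mathbb{A}}$, as desired.

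The whole argument is essentially bookkeeping; the only place that genuinely needs care — and the step I would flag as the main (minor) obstacle — is the absorption step, where one must be certain that no cross term produced by the non-commutative multiplication table escapes $Q$. Centrality of $R'$ settles this cleanly: after multiplying out, each coordinate is a finite sum of terms $(\pm f_s)\,q_t$ with $f_s\in R'$ and $q_t\in Q$, which lies in $Q$ because $Q$ is an ideal of $R'$. With that in hand, both the ideal property and the identification $J=\langle q,\sigma\rangle_{\mathbb{A}}$ follow at once.
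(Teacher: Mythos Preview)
Your proof is correct and matches the paper's intended approach: the paper gives no explicit proof, stating only ``In a similar way, we obtain the following lemma'' (referring to the preceding lemma on $J_q$, itself left to the reader), so you have simply filled in the routine verification the authors omitted. The key observations you use---that $R'$ is central in $\mathbb{A}$, that absorption reduces to $Q$ being an ideal of $R'$, and that any $q_\ell\in Q=\langle q,\sigma\rangle$ decomposes as $a_\ell q+b_\ell\sigma$ so that centrality lets you factor out $q$ and $\sigma$---are exactly the ingredients the paper's ``similar way'' is gesturing at.
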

\begin{lemma}\label{lemma4}
Let $J=Q+Q i+Q j+Q k$ and $\mathbb{A}=R'+R' i+R' j+R' k$. Then
$$ \dfrac{\mathbb{A}}{J}\cong  \dfrac{R'}{Q}+ \dfrac{R'}{Q} i+\dfrac{R'}{Q} j+\dfrac{R'}{Q} k.$$
\end{lemma}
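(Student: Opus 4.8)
The plan is to produce an explicit surjective ring homomorphism from $\mathbb{A}$ onto $\frac{R'}{Q}+\frac{R'}{Q}i+\frac{R'}{Q}j+\frac{R'}{Q}k$ (which is just the quaternion algebra $\big(\frac{1,1}{R'/Q}\big)$, equipped with the rules $i^2=j^2=1$, $i\circ j=-j\circ i=k$) whose kernel is exactly $J$, and then to invoke the first isomorphism theorem. By the preceding lemma, $J$ is a two-sided ideal of $\mathbb{A}$, so the quotient ring $\mathbb{A}/J$ is well defined and it suffices to exhibit such a homomorphism.

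First I would let $\pi\colon R'\to R'/Q$ be the canonical projection and define $\psi\colon \mathbb{A}\to \frac{R'}{Q}+\frac{R'}{Q}i+\frac{R'}{Q}j+\frac{R'}{Q}k$ by
$$\psi(f_0+f_1 i+f_2 j+f_3 k)=\pi(f_0)+\pi(f_1)i+\pi(f_2)j+\pi(f_3)k.$$
Additivity of $\psi$ is immediate, since addition in both algebras is componentwise and $\pi$ is additive. For multiplicativity I would use that, once $a=b=1$ has been fixed, the product $F\circ G$ of two quaternions of $\mathbb{A}$ has, in the basis $\{1,i,j,k\}$, coefficients that are fixed $\mathbb{Z}$-bilinear expressions in the coefficients of $F$ and $G$ (for instance the scalar part is $f_0g_0+f_1g_1+f_2g_2-f_3g_3$, and the $i$-, $j$- and $k$-parts are similar bilinear combinations), all of whose entries lie in the commutative ring $R'$. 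Applying the ring homomorphism $\pi$ to each of these expressions, and noting that the very same formulas compute the product in $\big(\frac{1,1}{R'/Q}\big)$, gives $\psi(F\circ G)=\psi(F)\circ\psi(G)$. Since $\psi(1)=1$ and $\pi$ is surjective, $\psi$ is a surjective ring homomorphism.

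It then remains to identify $\ker\psi$. Because a quaternion algebra over any commutative ring is by construction a free module of rank four on the basis $\{1,i,j,k\}$, the element $\pi(f_0)+\pi(f_1)i+\pi(f_2)j+\pi(f_3)k$ of the target vanishes if and only if $\pi(f_m)=0$ for every $m\in\{0,1,2,3\}$, i.e. if and only if $f_0,f_1,f_2,f_3\in Q$, i.e. if and only if $F\in Q+Qi+Qj+Qk=J$. Hence $\ker\psi=J$, and the first isomorphism theorem yields $\mathbb{A}/J\cong \operatorname{im}\psi=\frac{R'}{Q}+\frac{R'}{Q}i+\frac{R'}{Q}j+\frac{R'}{Q}k$, which is the claim.

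I do not expect a genuine obstacle here: the only point needing a little care is the verification that $\psi$ respects multiplication, and this reduces entirely to the fact that the quaternion product is given by universal bilinear formulas with integer coefficients (because $a=b=1$) together with the fact that $Q$ is a two-sided — indeed central — ideal, so that reducing coefficients modulo $Q$ is compatible with the algebra structure; this is exactly what makes $J$ an ideal in the first place. One could alternatively argue module-theoretically, identifying $\mathbb{A}\cong(R')^4$ and $J\cong Q^4$ and then checking that the induced ring structure on $(R'/Q)^4$ is the asserted one, but the homomorphism route above is the most economical.
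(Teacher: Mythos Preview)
The paper does not actually supply a proof of this lemma: it announces beforehand that ``the proof of some lemmas is trivial and they are left to the reader,'' and Lemma~\ref{lemma4} is one of those stated without proof. Your argument is correct and is exactly the standard one: apply the canonical projection $\pi\colon R'\to R'/Q$ componentwise to obtain a surjective ring homomorphism $\psi$ (multiplicativity following because the quaternion product over any commutative base ring is given by the same fixed bilinear formulas, and $\pi$ is a ring homomorphism), identify $\ker\psi=J$ using that $\{1,i,j,k\}$ is a free basis, and invoke the first isomorphism theorem. There is nothing in the paper to contrast your approach with, and your proof has no gaps.
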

\begin{corollary}
The computation of  $F= f_0+ f_1 i+f_2 j+f_3 k \in \mathbb{A}$ modulo $ J$ is equivalent to the computation of $ f_i$ modulo $Q$, for   $i=0, \ldots, 3$.
\end{corollary}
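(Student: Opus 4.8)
The plan is to read the corollary straight off Lemma~\ref{lemma4}. That lemma supplies a ring isomorphism
$$
\Phi:\ \frac{\mathbb{A}}{J}\ \xrightarrow{\ \sim\ }\ \frac{R'}{Q}+\frac{R'}{Q}\,i+\frac{R'}{Q}\,j+\frac{R'}{Q}\,k ,
$$
and the only thing really at stake is that $\Phi$ is the \emph{coordinatewise} map, i.e.\ that it sends the coset $F+J$ of $F=f_0+f_1 i+f_2 j+f_3 k$ to $(f_0+Q)+(f_1+Q)\,i+(f_2+Q)\,j+(f_3+Q)\,k$. I would therefore begin by exhibiting the natural reduction map $\pi:\mathbb{A}=R'+R'i+R'j+R'k\to (R'/Q)+(R'/Q)i+(R'/Q)j+(R'/Q)k$ that reduces each of the four $R'$-components modulo $Q$, and check it is a ring homomorphism: additivity is componentwise and immediate, while for multiplication one expands $F\circ G$ using $i^2=j^2=1$ and $i\circ j=-j\circ i=k$, so each component of $F\circ G$ is a fixed $R'$-polynomial expression in the components of $F$ and $G$; since $Q$ is an ideal of $R'$, reducing the components of $F,G$ modulo $Q$ reduces each component of $F\circ G$ modulo $Q$ as well.

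Next I would identify $\ker\pi$: an element of $\mathbb{A}$ lies in $\ker\pi$ iff all four of its $R'$-components lie in $Q$, which is exactly the defining description $J=Q+Qi+Qj+Qk$. Hence $\ker\pi=J$, and the first isomorphism theorem yields an isomorphism $\mathbb{A}/J\to (R'/Q)+(R'/Q)i+(R'/Q)j+(R'/Q)k$ which by construction is componentwise reduction; this is (up to the identification made in Lemma~\ref{lemma4}) the map $\Phi$.

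With $\Phi$ in hand the corollary is a restatement. Two quaternions satisfy $F\equiv F'\pmod J$ in $\mathbb{A}$ if and only if $\Phi(F+J)=\Phi(F'+J)$, i.e.\ if and only if $f_\ell\equiv f_\ell'\pmod Q$ for every $\ell\in\{0,1,2,3\}$. Equivalently, a complete set of representatives for $\mathbb{A}/J$ is obtained by choosing, independently, one representative for each of the four classes $f_\ell+Q\in R'/Q$. So the datum ``$F$ modulo $J$'' is precisely the same as the four independent data ``$f_\ell$ modulo $Q$'', which may be computed in parallel.

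There is essentially no obstacle here: the statement is a bookkeeping consequence of Lemma~\ref{lemma4}, and the single point that warrants care is confirming that the isomorphism of that lemma carries no ``twist'' — that it is genuinely componentwise reduction and not some other $R'/Q$-algebra automorphism of the right-hand side — which is settled by inspecting how it is built, as above. (The practical payoff, relevant to the key generation, is that it reduces the inversion of $F$ modulo $J$ to inversion in the quaternion algebra over $R'/Q$, and thence, via the Lagrange/CRT description of $R'/Q$, to a few $2\times2$ matrix inversions over small fields; but that goes beyond what the corollary asserts.)
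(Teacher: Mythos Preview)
Your proposal is correct and follows exactly the route the paper intends: the corollary is stated immediately after Lemma~\ref{lemma4} with no separate proof, since the paper regards it as a direct restatement of that isomorphism. You have simply supplied the verification the paper leaves to the reader---that the isomorphism is the componentwise reduction map (surjective ring homomorphism with kernel $J=Q+Qi+Qj+Qk$)---so nothing more is needed.
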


In the following, we recall the combinatorial nullstellensatz theorem.
\begin{theorem}[Combinatorial Nullstellensatz]
Let $F$ be an arbitrary field, and let $f=f(x_1,\\ \ldots, x_n)$ be a polynomial in $F[x_1, \ldots, x_n]$. Suppose the degree of $f$ is $\sum\nolimits_{i= 1}^n t_i$, where each $t_i$ is a nonnegative integer and suppose the coefficient of  $\prod\nolimits_{i= 1}^n x_i^{t_i}$  in $f$ is nonzero. Then, if $U_1, \ldots, U_n$ are subsets of $F$ with $\mid U_i\mid > t_i$, there are $a_1 \in U_1, a_2\in U_2, \ldots, a_n\in U_n$ so that $f(a_1, \ldots, a_n)\neq 0$.
\end{theorem}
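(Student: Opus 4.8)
The plan is to argue by contradiction, using two ingredients: an algebraic ``weak Nullstellensatz'' about polynomials that vanish on a box $S_1\times\cdots\times S_n$, and a polynomial reduction that lowers partial degrees without changing values on the box. First I would establish the lemma: if $g\in F[x_1,\ldots,x_n]$ has $\deg_{x_i}g\le |S_i|-1$ for subsets $S_i\subseteq F$ and $g$ vanishes on $S_1\times\cdots\times S_n$, then $g=0$. This follows by induction on $n$; the case $n=1$ is just that a nonzero univariate polynomial of degree at most $|S_1|-1$ cannot have the $|S_1|$ distinct roots lying in $S_1$, and for the inductive step one writes $g=\sum_{j=0}^{|S_n|-1}g_j(x_1,\ldots,x_{n-1})x_n^j$, fixes a point of $S_1\times\cdots\times S_{n-1}$, applies the base case in the variable $x_n$ to see that each $g_j$ vanishes on $S_1\times\cdots\times S_{n-1}$, and then invokes the induction hypothesis on each $g_j$.

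Next I would reduce to the case $|U_i|=t_i+1$ by replacing each $U_i$ with a subset of that size (which only strengthens the conclusion), and set $g_i(x_i)=\prod_{a\in U_i}(x_i-a)=x_i^{t_i+1}-\sum_{j=0}^{t_i}g_{ij}x_i^j$, a monic polynomial of degree $t_i+1$ vanishing on $U_i$. Assuming for contradiction that $f$ vanishes on $U_1\times\cdots\times U_n$, I would carry out the reduction of $f$ modulo the $g_i$ --- repeatedly substituting $x_i^{t_i+1}=\sum_j g_{ij}x_i^j$ inside the monomials of $f$ --- to obtain a representation
\[
f=\sum_{i=1}^n h_i\,g_i+\bar f,\qquad \deg_{x_i}\bar f\le t_i\ \text{for all }i,\qquad \deg(h_i g_i)\le\deg f .
\]
Since each $g_i$ vanishes on the box $U_1\times\cdots\times U_n$, the polynomial $\bar f$ agrees with $f$ there and hence also vanishes on it; as $\deg_{x_i}\bar f\le t_i<|U_i|$ for every $i$, the lemma forces $\bar f\equiv 0$.

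Finally I would derive the contradiction by examining the coefficient of the distinguished monomial $\prod_{i=1}^n x_i^{t_i}$, and this is exactly where the hypothesis $\deg f=\sum_i t_i$ is used. That monomial has maximal total degree in $f$, so it can only receive contributions from the top-degree part of each $h_i g_i$; but since the leading term of $g_i$ is $x_i^{t_i+1}$, every top-degree monomial of $h_i g_i$ has $x_i$-exponent at least $t_i+1$ and is therefore different from $\prod_j x_j^{t_j}$. Hence the passage from $f$ to $\bar f$ leaves the coefficient of $\prod_i x_i^{t_i}$ unchanged, so it is still nonzero in $\bar f$, contradicting $\bar f\equiv 0$. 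I expect this last bookkeeping --- verifying that the reduction does not disturb the coefficient of $\prod_i x_i^{t_i}$, via careful degree counting together with the total-degree hypothesis --- to be the only delicate point of the argument.
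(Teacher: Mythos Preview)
Your argument is correct and is essentially Alon's original proof of the Combinatorial Nullstellensatz. Note, however, that the paper does not prove this theorem at all: it merely \emph{recalls} the statement with a citation to \cite{Combinatorial-Nullstellensatz} and then applies it in the subsequent Remark. So there is no ``paper's own proof'' to compare against; your write-up supplies a complete proof where the paper gives none.
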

\begin{remark}
In accordance with the Combinatorial Nullstellensatz theorem \cite{Combinatorial-Nullstellensatz}, for any set of $T \subseteq E$, there is a unique polynomial $\sigma(x,y)$ that generates $Q$.
\begin{proof}
Let $T'$ and $T$ be two subsets of $E$ that generate the same polynomial $\sigma(x,y)$. Without loss of generality assume that the points are ordered such that the first $|T|$ (or $|T'|$) points are exactly the points of the subset $T$ (or $T'$). Equation (4) implies that 
 $\sigma(x,y)=\sum\nolimits_{i=1}^{|T|} q_i\lambda_{a_i, b_i}(x,y)=\sum\nolimits_{i=1}^{|T'|} q_i\lambda_{a_i, b_i}(x,y)$ and $k(x, y)=\sum\nolimits_{i=1}^{|T|} q_i\lambda_{a_i, b_i}(x,y)- \sum\nolimits_{i=1}^{|T'|} q_i\lambda_{a_i, b_i}(x,y)=0$. According to the Combinatorial Nullstellensatz theorem, since the degree of $k$ is $2(n-1)$ and $U_1=U_2=\{b\in \mathbb{Z}_q ~|~b^n=1\}$, where the condition $n |(q-1)$ yields $\mid U_1\mid=\mid U_2\mid=n $, there are $a \in U_1, b\in U_2$ so that $k(a, b)\neq0$. However $k(x,y)=0$, and this contradiction completes the proof. $\hfill \square $
\end{proof}
\end{remark}

In this cryptosystem, we  choose a set $T \subseteq E$ such that the cardinality of this set is small. Then, we construct the ideal $Q=\langle q, \sigma(x, y)\rangle$ of $R'$.
We consider  $J=Q+Q i+Q j+Q k$ as the \emph{private ideal} of $\mathbb{A}$ and use it for key generation and decryption. In the next subsection, we explain how to choose the subset $T$.

By considering a monomial basis for $R'$,  we get that $R'$ is isomorphic to $\mathbb{Z}^{n^2}$ as an additive \textcolor{mycolor}{modulo over $\mathbb{Z}$.} Since $Q$ is an ideal of $R'$, then $Q \subset R' \cong \mathbb{Z}^{n^2}$ is an additive subgroup of $\mathbb{Z}^{n^2}$ and so  $Q$ can be viewed as an  $n^2 $ dimensional lattice, denoted by $\Lambda_Q$, in $\mathbb{Z}^{n^2}$. Similarly, when we specify a basis for $\mathbb{A}$, we get that $\mathbb{A}$ is isomorphic to $\mathbb{Z}^{n^2}+\mathbb{Z}^{n^2} i+\mathbb{Z}^{n^2} j+\mathbb{Z}^{n^2} k$ that is also isomorphic to $\mathbb{Z}^{4n^2}$ as an additive group. The  \emph{private ideal} $J$ is an ideal of $\mathbb{A}=R'+R' i+R' j+R' k$ and $J \subset \mathbb{A} \cong \mathbb{Z}^{4n^2}$ can be viewed also as a lattice in $\mathbb{Z}^{4n^2}$. We use this private lattice, denoted by $\Lambda_{private}$, in our cryptosystem.

To construct a generator matrix for the private lattice $\Lambda_{\textrm{private}}$, we first give a generator matrix for the lattice $\Lambda_Q$ and develop it for $\Lambda_{\textrm{private}}$. In the following, we give the generator matrix of the lattice $\Lambda_Q$.

Let $D'$ be the generator matrix of $\Lambda_Q$.
Then, $D'$ is an $n^2 \times n^2$ matrix  whose rows are coefficient vectors of the Lagrange interpolants $\{\lambda_{a_{i}, b_{i}}\}_{i=1}^{|T|}$ and some $q e_{j_i}$'s to be explained next, where $j_i \in S \subseteq \{1, \ldots, n^2\}$, for $i=1, \ldots, n$ and  $e_k$ is the vector with one in $k$-th position and zero in other positions. Consider the matrix $L$ whose rows are the coefficient vectors of the Lagrange interpolants  $\{\lambda_{a_1, b_1},\ldots, \lambda_{a_{|T|}, b_{|T|}}\}$. Using that the row rank of an integer matrix is equal to its column rank  \cite{rank} (see Chapter 4), the row rank and column rank of $L$ is $|T|$.  We can assume that the columns $i_1, \dots, i_{|T|}$ are independent columns in this matrix  $L$.  Let
\begin{eqnarray*}
 S:=\{ j_1, \dots,j_{n^2-|T|}\}=\{1, \ldots, n^2\}\setminus\{i_1, \dots, i_{|T|}\},
 \end{eqnarray*}
where $ j_1 < j_2 < \cdots < j_{n^2-|T|}$. Then, $q e_{j_1} \dots, q e_{j_{n^2-|T|}}$ and the coefficient vectors of interpolants  $\{\lambda_{a_{i}, b_{i}}\}_{i=1}^{|T|}$  form a basis for $\Lambda_Q$ with high probability.
Now, since $J\cong Q^{4}$ as  additive groups, the private lattice $\Lambda_{\textrm{private}}$ is generated by the rows of the following matrix

\begin{eqnarray}\label{private lattice}
M_{\textrm{private}}=\left[
                   \begin{array}{cccc}
                      D' & 0 & 0 & 0\\
                      0 & D' & 0 & 0\\
                      0 & 0 & D' & 0\\
                      0 & 0 & 0 & D'
                       \end{array}
                     \right].
\end{eqnarray}
\subsection{ Key generation}
For creating public and private keys, we randomly choose two quaternion polynomials $F$ and $G$ in $\mathbb{A}$ as
\begin{eqnarray*}
F&=&f_0+ f_1 i+f_2 j+f_3 k,  \quad f_0,f_1, f_3, f_4 \in L_f,\\
G&=&g_0+g_1 i+g_2 j+g_3 k, \quad g_0,g_1, g_3, g_4 \in L_g.
\end{eqnarray*}
The polynomials  $f_i\in L_f$ and  $g_i\in L_g$, for $i=0, \ldots, 3$, are small ternary polynomials in $R'$, that is,
they are random polynomials that most coefficients are zero and the rest are in the set $\{1, -1\}$. Therefore, the two quaternion polynomials  $F$ and $G$ are small ternary quaternion polynomials.  If we consider the coefficient vector of each element $F=f_0+ f_1 i+f_2 j+f_3 k \in \mathbb{A}$ as
 \begin{eqnarray*}
\overrightarrow{F}&=&(\overrightarrow{f_0}, \overrightarrow{f_1}, \overrightarrow{f_2}, \overrightarrow{f_3})
  \end{eqnarray*}
then, the coefficient vectors of $\overrightarrow{F}$ and $\overrightarrow{G}$ have many zeros and few $1$ and $-1$. Therefore, the Euclidean norm of $\overrightarrow{F}$ and $\overrightarrow{G}$ is small.

Both  $F$ and $G$  should be invertible in $\mathbb{A}/J$, that is, there exist $F^{-1}$ and $G^{-1}$ in $\mathbb{A}$ such that
\begin{eqnarray*}
  F\circ F^{-1}&=&F^{-1}\circ F= 1 \; \;(\mbox{mod } J),\\
   G\circ G^{-1}&=&G^{-1}\circ G= 1 \;\;(\mbox{mod } J).
\end{eqnarray*}
The  quaternion  $F$  must  satisfy  the  additional  requirement that  it  is invertible in $\mathbb{A}_p$ for decryption purpose. In order for the quaternion $F$ to be invertible over $\mathbb{A}_p$, the polynomial \textcolor{mycolor}{$N(F)=f_0^{2}-f_1^{2}-f_2^{2}-f_3^{2}$} must be nonzero and invertible over its underlying ring $R'_p$. By Lemma \ref{lemma4},  $\mathbb{A}/J$ is a quaternion algebra over the ring  $R'/Q$. Hence, the necessary and sufficient condition for invertibility of  $F$ and $G$ in $\mathbb{A}/J$ is the existence of $(N(F))^{-1}$ and  $(N(G))^{-1}$ in the ring $R'/Q$.

In the sequel, we present a method for choosing $F$ and $G$ to fulfill the above conditions. Our method uses the fact that the polynomial $N(F)$ is invertible modulo $Q$ if and only if the roots of $N(F)$ which belong to $E$ are also contained in $T$. Thus our method selects  $T$ after choosing $F$ and $G$ to make sure that $N(F)$ and $N(G)$ are invertible modulo $Q$:  choose $G$ at random and define the set $T=\bigcap\nolimits_{i=0}^3\{(a, b) \in E ~|~ g_i(a, b)=0 \}$. If $T=\emptyset$, then pick another $G$. Select $F$ randomly such that $\{(a, b) \in E ~|~  N(F)(a, b)=0 \}\subset T $.

The inverses of the quaternion $F$ in $\mathbb{A}/J$ and $\mathbb{A}_p$ are denoted by $F^{-1}$ and $F^{-1}_{p}$, respectively; they are computed as follows
\begin{eqnarray*}
F^{-1}&=&(f_0^{2}+f_1^{2}+f_2^{2}+f_3^{2})^{-1} \circ \bar{F}=l_0+l_1 i+l_2 j+l_3 k;\quad \quad l_0,l_1,l_2,l_3 \in R'/Q \\
F^{-1}_{p}&=&(f_0^{2}+f_1^{2}+f_2^{2}+f_3^{2})^{-1} \circ \bar{F}=s_0+s_1 i+s_2 j+s_3 k; \quad \quad s_0,s_1,s_2,s_3 \in R'_p.
\end{eqnarray*}
A necessary issue in public key generation is to choose a quaternion $W=w_0 +w_1\cdot i+w_2\cdot j+w_3\cdot k$ \textcolor{mycolor}{in $\mathbb{L}_{q}=\Big(\frac{1, 1}{\mathbb{Z}_q}\Big)$, such that $W$ be invertible over $\mathbb{L}_{q}$ and $w_0, w_1, w_2, w_3\neq 0$.} This quaternion prevents the leakage of public key information.
Then we compute $\tilde{H}=F^{-1} \circ G + \vartheta \pmod q$ where $F\circ F^{-1}=F^{-1}\circ F= 1\pmod J$ and $\vartheta=W\circ \sigma \in J_{q}$ is unknown to the attacker. The quaternion
\begin{eqnarray}\label{public key}
\tilde{H}&=&F^{-1} \circ G + \vartheta\;\;(\mbox{mod } q),
\end{eqnarray}
is the public key in our cryptosystem. \textcolor{mycolor}{The modulo $q$ operation is componentwise, it means that the corresponding quaternion of $F^{-1} \circ G + \vartheta$ in $\mathbb{A}_q$ that is $(h_0 (\bmod q))+(h_1 (\bmod q))i+(h_2(\bmod q))j+(h_3 (\bmod q))k$ is considered as the public key $\tilde{H}$.} \textcolor{mycolor2}{The coefficient vectors $\overrightarrow{h_i(\bmod q)}$'s, for $i=0, \ldots, 3$, denote the public key $\tilde{H}$.  Therefore, the memory needed to store the public key $\tilde{H}$ is equal to $4n^2\lceil\log_2 q\rceil$ bits.}

We have that $F\circ F^{-1}=F^{-1}\circ F= 1 +\alpha\circ \sigma\pmod q$, where $\alpha \in \mathbb{A}$  is unknown to the attacker. Therefore, multiplying Equation (\ref{public key}) by $F$ implies that
\begin{eqnarray*}
F\circ \tilde{H} &=&F\circ F^{-1} \circ G + F\circ\vartheta\;\;(\mbox{mod } q) \\
&=& (1+\alpha\circ \sigma)\circ G +F\circ\vartheta\;\;(\mbox{mod } q)\\
&=& G+\sigma\circ \alpha\circ G +F\circ\vartheta\;\;(\mbox{mod } q)\\
&=& G+\gamma\;\;(\mbox{mod } q).
\end{eqnarray*}
Thus,
\begin{eqnarray}
F\circ \tilde{H} &=& G+\gamma\;\;(\mbox{mod } q), \label{public equation}\\
\gamma&=&\sigma \circ\alpha\circ G+F\circ\vartheta, \label{gamma}
\end{eqnarray}
where $\gamma \in J_{q}$.
Subsequently, the private key consists of $(\overrightarrow{G}, \overrightarrow{F}, -\overrightarrow{\rho(\gamma)}) $, where $\overrightarrow{\rho(\gamma)}=\big(\rho(\gamma_0), \ldots, \rho(\gamma_3)\big)$.

Using Lemma \ref{lemma3}, we have
\begin{eqnarray}
\rho(\sigma)&=&\left(q_1, \ldots, q_{|T|}, 0,\ldots, 0\right),  \\
\rho(G)&=&\left(0, \ldots, 0, G(a_{|T|+1}, b_{|T|+1}),\ldots, G(a_{n^2}, b_{n^2})\right),\label{G}\\
\rho(\vartheta)&=&\left(q_1W, \ldots, q_{|T|}W, 0, \ldots, 0\right). \label{vartheta}
\end{eqnarray}
Therefore,
\begin{eqnarray}\label{eq3}
\rho(\gamma)&=&\rho(\sigma\circ \alpha\circ G)+\rho(F\circ \vartheta)\nonumber \\
&=&(0,\ldots, 0)+\rho(F\circ \vartheta)\nonumber\\
&=&\rho(F\circ \vartheta).
\end{eqnarray}
\subsection{ Encryption}
To encrypt the incoming data $m$, we  convert it into a quaternion having four small polynomials.
We map $m$ to a quaternion $M=m_0+m_1 i+m_2 j+m_3 k ~\in \mathbb{A}$, where $m_{i} \in L_m$, for $i=0, \ldots, 3$. The incoming data can be generated from the same or four different sources. Then, the data is transformed into the  quaternion $M$.  After that, we pick a random quaternion  $\Phi=\phi_0+\phi_1 i+\phi_2 j+\phi_3 k $ such that $\phi_i \in L_\phi$, for $i=0, \ldots, 3$.
The encrypted message is
\begin{eqnarray*}
C=p\tilde{H}\circ \Phi+M\;\;(\mbox{mod } q).
\end{eqnarray*}
In this cryptosystem we can set up to four messages at once  and encrypt them simultaneously as one quaternion and transmit it as the ciphertext. Encryption and key generation need one quaternion multiplication including 16 convolution multiplications but we show that the multiplication of two quaternions of $\mathbb{A}_q$ is equivalent to a multiplication of their equivalent  $2\times2$-matrices over the the ring $R'_q$.  Moreover,  by Strassen method \cite{Strassen} the multiplication of two quaternions is possible with only 7 convolutional multiplications.

Since each quaternion in our cryptosystem is a $4$-tuple vector of integers and since we use bivariate polynomials, an instance of BQTRU with the parameters $(n,p,q)$ is comparable with an instance of NTRU with the public parameters $(4n^2,p,q)$. We believe this is a fair comparison between NTRU and our system.

We note that the number of multiplication operations in a convolutional product of two polynomials using standard arithmetic with degree $n-1$ is equal to $n^2$. Therefore, if we consider NTRU with the public parameters $(4n^2,p,q)$, the number of multiplication operations in encryption and key generation is  $(4n^2)^2=16n^4$. While,  the multiplication of two quaternions in our encryption and key generation process needs only 7 convolutional multiplications in $R'_q$. Therefore, the number of multiplication operations in key generation and encryption of our cryptosystem with dimension $n$ is $7n^4$. Hence, if we apply Strassen method in our cryptosystem, the key generation and encryption process is  $16/7$ times faster than NTRU with dimension comparable parameters.

\subsection{ Decryption}
 Let $C \in \mathbb{A}_q$ be the received ciphertext. In  order  to  decrypt  it,  a receiver first  computes
\begin{eqnarray*}
F\circ C\;\;(\mbox{mod } q).
\end{eqnarray*}
Then she  finds the closest vector $B$ to $F\circ C$ in the private lattice $\Lambda_{\textrm{private}}$. If we choose the public parameters carefully, then $V=F\circ C-B$  is a  quaternion with  integer  coefficients, and the receiver  recovers  the  message by  computing $$F^{-1}_{p} \circ V \;\;(\mbox{mod } q).$$
The  decryption  works since
\begin{eqnarray*}
F\circ C&=&F\circ(p\tilde{H}\circ \Phi+M)\;\;(\mbox{mod } q)\\
&=&p F\circ\tilde{H}\circ \Phi+F\circ M \;\;(\mbox{mod } q)\\
&=&p (G+\gamma)\circ \Phi+F\circ M\;\;(\mbox{mod } q)\\
&=&p G\circ \Phi+p \gamma\circ \Phi+F\circ M\;\;(\mbox{mod } q)\\
&=&p G\circ \Phi+F\circ M+(p \gamma\circ \Phi+\varepsilon q)\\
&=&p G\circ \Phi+F\circ M+B,
\end{eqnarray*}
\textcolor{mycolor}{where $\varepsilon \in \mathbb{A} $ and $B=(p\alpha\circ G\circ \Phi+p F\circ W\circ \Phi)\circ\sigma+\varepsilon q \in J $ is unknown to the receiver.} The quaternion $p G\circ \Phi+F\circ M \in \mathbb{A}_q$ should be found in order to make the process feasible.  To this end, we find $B\in J$ and after that we subtract it from $F\circ C=p G\circ \Phi+F\circ M+B$. Then, the process continues in the same fashion as in the NTRU decryption method.

Considering the vector representation, we have $\overrightarrow{F\circ C}=\overrightarrow{p G\circ \Phi+F\circ M}+\overrightarrow{B}$.
We show that $\overrightarrow{B}\in \Lambda_{\textrm{private}}$, is the closest vector of $\Lambda_{\textrm{private}}$ to $\overrightarrow{F\circ C}$: the vector  $\overrightarrow{B}$ is a integer linear combination of the rows of the generator matrix $M_{\textrm{private}}$ which consist of some of $\overrightarrow{\lambda_{a, b}}$'s.  We observe that, the polynomials $\lambda_{a, b}$ have many monomials with large coefficients. So,  the Euclidean norm of the vectors  $\overrightarrow{\lambda_{a, b}}$ and $\overrightarrow{B}$ (as integer linear combination of  $\overrightarrow{\lambda_{a, b}}$'s) is very large.
Moreover, since $F$, $G$, $\Phi$ and $M$ are small ternary quaternion polynomials, that is, they have few coefficients that are equal to $1$ and $-1$, the Euclidean norm of   $\overrightarrow{F}, \overrightarrow{G}, \overrightarrow{\Phi}$ and $\overrightarrow{M}$ is small. Therefore, the Euclidean norm $\overrightarrow{p G\circ \Phi+F\circ M}$ is very small relative to the Euclidean norm of $\overrightarrow{B}$. Thus, treating $\overrightarrow{p G\circ \Phi+F\circ M}$ as a noise, $\overrightarrow{F\circ C}$ can be viewed as a perturbed lattice point $\overrightarrow{B}$ with small noise vector and $\overrightarrow{B}$ is,  with high probability, the closest vector of $\Lambda_{\textrm{private}}$ to $\overrightarrow{F\circ C}$.

Due to the  structure of the private lattice $\Lambda_{\textrm{private}}$  and the probabilistic property of the noise vector, it is easy to find the CVP in $\Lambda_{\textrm{private}}$. More information about this can be found in Section \ref{Successful Decryption}.

If we choose suitable public parameters for our cryptosystem, the coefficients of the four polynomials in $p G\circ \Phi+F\circ M$ lie in the interval $[-\frac{q-1}{2}, \frac{q-1}{2}]$ and decryption does not fail. Then, we can reduce $p G\circ \Phi+F\circ M$ modulo $p$ and the term $p G\circ \Phi$ vanishes. Hence, the message $M$ is recovered by multiplying $F\circ M \pmod p$ by $F^{-1}_{p}$ from the left side.


The decryption phase of our cryptosystem, with dimension $n$, needs to calculate 14 convolution products instead of 32 in NTRU cryptosystem  with dimension $4n^2$. The  legitimate receiver should solve a CVP in the private lattice $\Lambda_{\textrm{private}}$ and this makes the decryption process of our cryptosystem to be slower than  NTRU. However, the presence of the private ideal makes our cryptosystem to be secure against lattice based attacks; this is explained in Section \ref{Lattice attacks}.
 \section{Successful decryption}\label{Successful Decryption}
It is important that the public parameters  in the cryptosystem be chosen  in such a way that the probability of decryption failure is very small (e.g., $2^{-80}$) \cite{book}.
The decryption process is successful if the closest vector to $F\circ C=p G\circ \Phi+F\circ M+B$ is found correctly and all quaternion coefficients of $V=p G\circ \Phi+F\circ M$ lie in the interval $[-\frac{q-1}{2}, \frac{q-1}{2}]$.

Kouzmenko  \cite{NTRUG} demonstrates the validity of a simple probabilistic model for the coefficients of the corresponding polynomial in the NTRU cryptosystem. \textcolor{mycolor}{Using  this model, we estimate the probability distribution of the coefficients of the polynomials in the quaternion $V=v_0+v_1i+v_2j+v_3k$, under certain reasonable hypotheses.}
Therefore, by using similar computations as in \cite{QTRU}, we have
\begin{eqnarray}
\textrm{E}(v_{i, j})&=&0,\\
\textrm{Var}(v_{i, j})&=&\frac{16 p^2 d_\phi d_g}{n^2}+\frac{4 d_f (p^2-1)}{6}, \label{variance}
\end{eqnarray}
for $i=0, \ldots, 3$ and $j=0, \ldots, n^2-1$.
This together with the assumption that the $v_{i, j}$s have Gaussian distribution with zero mean and standard deviation $\theta=\sqrt{\frac{16 p^2 d_\phi d_g}{n^2}+\frac{4 d_f (p^2-1)}{6}}$, gives
\begin{eqnarray*}
Pr\Big(|v_{i,j}| \leq \frac{q-1}{2}\Big)=2\Phi\Big(\frac{q-1}{2\theta}\Big)-1,
\end{eqnarray*}
for $i=0, \ldots, 3$ and $j=0, \ldots, n^2-1$,  where $\Phi$ is the normal cumulative distribution function.
Thus, the probability for all coefficients of $V$ lying in the interval $[-\frac{q-1}{2}, \frac{q-1}{2}]$, is determined as
$$\Big(2\Phi\Big(\frac{q-1}{2\theta}\Big)-1\Big)^{4n^2}.$$

Another reason  for unsuccessful decryption is failing to estimate the vector $B$ from $V':=F\circ C=V+B$. Finding $B$ involves solving a CVP in the private lattice $\Lambda_{\textrm{private}}$. In general, CVP is an NP-hard problem and  the computational complexity of the algorithms for finding the closest vector of a lattice to an arbitrary vector is intractable, however it is tractable in our special case which is described in the sequel.

We note that the  closest vector problem is equivalent to the integer least-squares (ILS) problem which is the problem of finding the least-squares solution to a system of linear equations where the unknown vector is comprised of integers, but the matrix coefficient and given vector are comprised of real numbers.  Lifting $V'=v'_0+v'_1i+v'_2j+v'_3k$ from $\mathbb{A}/J$ into $\mathbb{A}$ involves solving  the CVP  problem or the ILS problem in $\Lambda_{\textrm{private}}$ with the generator matrix $M_{\textrm{private}}$ presented in Equation (\ref{private lattice}):
\begin{eqnarray}\label{ILS}
\underset{z\in \mathbb{Z}^{4n^2}}{\textrm{Argmin}} \|\overrightarrow{V'}-zM_{\textrm{private}} \|^2 = \sum\limits_{i=0}^3\underset{z_i\in \mathbb{Z}^{n^2}}{\textrm{Argmin}} \|\overrightarrow{v'_i}-z_iD' \|^2.
\end{eqnarray}
As argued in the previous section, the ideal $Q$ can be viewed as  an $n^2$ dimensional lattice $\Lambda_Q$ generated by the matrix $D'$. Therefore,
this step of our decryption involves solving four CVP's in dimension $n^2$. Indeed, our problem is equivalent to finding the closest lattice point in $\Lambda_Q$ to the given point $\overrightarrow{v'_i}$ in Equation (\ref{ILS}), for $i=0, \ldots, 3$, that has been perturbed by an additive noise vector $\overrightarrow{v_i}$, where $(\overrightarrow{v_0}, \overrightarrow{v_1}, \overrightarrow{v_2}, \overrightarrow{v_3})=\overrightarrow{V}=\overrightarrow{p G\circ \Phi+F\circ M}$.

Our highest level of security occurs in $n\geq 11$ ($n^2\geq 121$). There exist less complex methods  for computing CVP in such small dimensions. One of the prominent methods is sphere decoding that searches for the closest lattice point within a given hyper sphere \cite{Viterbo}, \cite{Hassibi}.
The sphere decoding is feasible  in small dimensions, typically in dimensions less than 130 \cite{Sole}. By applying some techniques (like LLL lattice reduction algorithm), the complexity of this algorithm can be decreased \cite{Vardy}.

We observe that $\overrightarrow{V}$ has Gaussian distribution with zero mean and variance given in Equation (\ref{variance}). We show in a numerical example that the variance of this additive noise is small for an optimal decoder like sphere decoder. In the rest of this subsection, it is discussed  that in our case, the expected complexity of the sphere decoding algorithm  is polynomial in the dimension of the lattice $\Lambda_Q$. Furthermore, since the dimension of  $\Lambda_Q$ in the highest security level is $121$, which is less that $130$, solving the CVP problems in our decryption process are feasible.

We use the Poltyrev limit \cite{Poltyrev} to know the amount of the maximum variance of the additive Gaussian noise under which a  maximum-likelihood decoder like sphere decoder  results in a correct estimation of a desired lattice vector. Indeed,  Poltyrev proved that there exists a lattice $\Lambda$, with basis matrix $L$ and large enough dimension $N$, such that decoding is possible with arbitrarily small error probability if and only if the variance of the additive noise vector is smaller than
$$\sigma^2_{\textrm{max}}:=\frac{\big(\textrm{vol}(\Lambda)\big)^{\frac{2}{N}}}{2\pi e},$$
where $\textrm{vol}(\Lambda)=|\det L|$.

If a lattice point is perturbed by a Gaussian noise with zero mean and variance $\sigma^2$, which is smaller than  $\sigma^2_{\textrm{max}}$, the CVP problem can be solved in polynomial (or even cubic) time using sphere decoding algorithm  \cite{Hassibi}.
Indeed, the error probability of the sphere decoding algorithm and its complexity are related to the variance of the additive noise.  When the variance of the additive noise is much smaller than $\sigma^2_{\textrm{max}}$, the error probability  of the sphere decoding algorithm  approaches zero and the algorithm returns the correct estimation for a desired lattice vector. In this case, the complexity of the sphere decoding algorithm is polynomial in the dimension of the lattice, which is reasonable and affordable in small dimensions (typically less than 130).

According to the basis of $\Lambda_Q$, $\textrm{vol}(\Lambda_Q)=|\det D'|> q^{n^2-|T|}$ and a lower bound for  $\sigma^2_{\textrm{max}}$ is estimated as
\begin{eqnarray*}
\sigma^2_{\textrm{max}}&>&\dfrac{q^{\frac{2(n^2-|T|)}{n^2}}}{2\pi e}=\dfrac{q^{2\big(1-\frac{|T|}{n^2}\big)}}{2\pi e}.
\end{eqnarray*}
We have assumed that $|T|\leq n$, therefore $\sigma^2_{\textrm{max}}> \frac{q^{2\big(1-\frac{1}{n}\big)}}{2\pi e}$. For the dimension $n=11$, the highest level of security in our cryptosystem,  $\sigma^2_{\textrm{max}}>885$, while the variance of the additive noise, $\textrm{Var}(v_i)\approx 263$, for  $i=0, \ldots, 3$.

Since  $|T|$ is chosen very small compared to $n^2$, the variance of the vectors  $v_i$'s is much smaller than  $\sigma^2_{\textrm{max}}$. Also, the dimension of the lattice $\Lambda_Q$ in our highest level of security is $n^2=121< 130$.
Hence, if a legitimate receiver which has generator matrix $D'$, uses the sphere decoding algorithm, then
solves the CVP in the decryption process with negligible  error probability and affordable complexity  and continues the decryption.
\subsection{Parameter selection and comparisons}\label{Parameter Selection}
In BQTRU cryptosystem, we consider $p\neq 2$ since the probability of invertibility $F$ modulo $p=2$ is too low. The initial condition $n |(q-1)$ is required. Since a primitive $k$-th root of unity does not always exist modulo $q$, it is necessary to work in an extension ring. The condition $n |(q-1)$ yields the factorization of  $x^n -1$ into linear factors in $\mathbb{Z}_q$. Then, $E$ is a set of elements of $\mathbb{Z}_q \times \mathbb{Z}_q$ which has maximum cardinality $n^2$. The subsets $T\subseteq E$ are chosen such that $|T|$ is small compared to $n^2$ (e.g. $|T|< n$). Therefore, the number of small subsets $T\subseteq E$ that can be selected is approximately equal to $\sum\nolimits_{i= 1}^n {n^2\choose i}$.
Then, the number of ideals $Q_q$ generated by $\sigma(x,y)=\sum\nolimits_{i=1}^{|T|} q_i\lambda_{a_i, b_i}(x,y)$ that can be used for decryption is
$$\sum\limits_{i= 1}^n (q-1)^{i}{n^2\choose i}.$$
For this reason, an exhaustive search for the ideal $Q_q$ containing $\langle x^n -1, y^n -1\rangle$ in $\mathbb{Z}_q[x, y]$ is infeasible when $n$ is large enough. The large number of these ideals  enhances the security of the cryptosystem against brute force attack.

\textcolor{mycolor}{The security of the NTRU cryptosystem depends on the size of all polynomials $f,g,r,m$ and $h$. Since finding the private key $(f,g)$ leads to solving SVP in the NTRU lattice  \cite{Coppersmith},  when the sizes of $f$ and $g$  increase ($d_f$ and $d_g$ increase), the security of the private key enhances. On the other hand, obtaining the message $m$ leads to finding CVP in the NTRU lattice   \cite{Message Attack} and then if the sizes of $m$ and $r$  increase ($d_m$ and $d_r$ increase),  the security of the message enhances. Therefore, increasing the size of $f,g,r$ and $m$  is reasonable for enhancing the security of NTRU but by increasing the size of $f,g,r$ and $m$,  the expected size of $p.g\ast r+f\ast m$ is increased and then the probability of decryption failure is relatively increased. Hence,  there is a tradeoff between the message security and the key security; we can not increase the size of the private key  $f, g$ (for the key security) and the polynomials $m$ and $r$ (for the message security), simultaneously.  }

In the proposed scheme, the presence of $Q$ allows us to reduce the size of $F$ and $G$ (that is, the size of $f_i$'s and $g_i$'s, for $i=0, \ldots, 3$) without detracting from the private key's security. This enables us to increase the sizes of $\Phi$ and $M$ (increase $d_\phi$ and $d_m$) without increasing the probability of decryption  failure. In this sense, the proposed scheme is more robust against message attacks than NTRU.

The decryption of NTRU is impractical when the sizes of the private keys are large while in BQTRU, the decryption with small sizes of $F$ and $G$ is impossible when $|T|$ is large.

The parameter sets in Table \ref{table1}, are chosen sufficiently large to eliminate the possibility of decryption failure. Namely, if we pick  $d_{f}=d_{g}=d_{\phi}=d_{m}=d$, since all polynomials in $V=p G\circ \Phi+F\circ M$ are ternary, their coefficients have maximum value equal to $2(6d+ 1)p$. It follows that no coefficient of  $V$ can exceed $2(6d+ 1)p$ in absolute value and the decryption is guaranteed to be successful if $2(6d+ 1)p<q/2$ or $q>4(6d+ 1)p$. Therefore, decryption is successful if $q>24dp$  approximately  and since $d\approx n^2/7$, then $q>24pn^2/7$ is sufficient condition for the successful decryption in our cryptosystem.
 However, an examination of the computation of the  maximum coefficient of  $V$  shows that decryption is likely to succeed even for considerably smaller values of $q$.  Therefore, similar to the NTRU cryptosystem, for additional efficiency and to reduce the size of the public key, it may be advantageous to choose a smaller value of $q$.

In the NTRU cryptosystem with parameters $(n, q', p)$, which was introduced  in Section \ref{NTRU}, we state that if $q'>(6d+ 1)p$ or $q'>2np$ (when we select $d\approx n/3$),  then decryption never fails. Hence for the NTRU cryptosystem with parameter sets $(4n^2, q', p)$, if  $q'> 8n^2p$, it has no decryption failure. It implies  that $q/q'\approx 3/7$. Therefore, choosing the prime number $q$
such that $n |(q-1)$ and $q\approx\frac{3}{7}q'$,  provides $(n, q, p)$-BQTRU  with an equivalent level of protection against decryption failure as $(4n^2, q', p)$-NTRU.
We summarize our results about the parameter's selection in Table \ref{table1} given at the end of Section \ref{Lattice attacks}.
\section{Security analysis}\label{Lattice attacks}
In this section, we discuss the key security and the message security of our cryptosystem.
We prove that the security of the proposed cryptosystem relies on the hardness of SVP in a certain type of  lattice.
We show that our cryptosystem is secure against lattice based attacks due to the specific structure of its lattice.

\textcolor{mycolor}{We recall that the convolution multiplication of two polynomials $f$ and $h$ in $R=\mathbb{Z}[x]/\langle x^{n}-1\rangle$ is equivalent to the matrix multiplication of the coefficient vector $\overrightarrow{f} $ and  the circulant matrix  $\mathcal{H}$  of the coefficient vector $\overrightarrow{h}$. This matrix is not circulant when $f$ and $h$ belong to $R'=\mathbb{Z}[x,y]/\langle x^{n}-1, y^{n} -1\rangle$. It can be shown that each row of $\mathcal{H}$ is a permutation of the coefficient vector $\overrightarrow{h}$ and these permutations are related to the monomial order that we have used for $R'$. Consequently,  the convolution product of two polynomials $f$ and $h$ in the proposed cryptosystem is also equivalent to the matrix multiplication $\overrightarrow{f}\mathcal{H}$, where  $\mathcal{H}$ is a matrix such that its rows are  a permutation of the coefficient vector $\overrightarrow{h}$.}

In the sequel,  we explain how the key recovery of our cryptosystem is formulated as a shortest vector problem in a certain special form of a lattice.

 The parameters $n$, $p$, $q$, $d_f$, $d_g$, $d_\phi$  and the public key
$\tilde{H}=F^{-1} \circ G + \vartheta$, where $F\circ F^{-1}=F^{-1}\circ F= 1\pmod J$,
are known to the attacker. For key recovery, the attacker should find a vector $(G, F, -\rho(\gamma)) $ which satisfies the equation  $F\circ\tilde{H}= G+\gamma\pmod q$.
Let $F=f_0+f_1 i+f_2 j+f_3 k$ and $ \tilde{H}=h_0+h_1 i+h_2 j+h_3k$, where $f_i$ and $h_i$ are polynomials in the ring $R'$, for $i=0, \ldots,3$. Then
\begin{eqnarray*}
F\circ \tilde{H}&=&(f_0+f_1 i+f_2j+f_3 k) \circ (h_0+h_1 i+h_2 j+h_3 k))\\
        &=&(f_0\ast h_0+f_1\ast h_1+f_2\ast h_2-f_3\ast h_3)+(f_0\ast h_1+f_1\ast h_0+f_2\ast h_3-f_3\ast h_2) i\\
       &+&(f_0\ast h_2-f_1\ast h_3+f_2\ast h_0+f_3\ast h_1) j+(f_0\ast h_3+f_1\ast h_2-f_2\ast h_1+f_3\ast h_0) k.
 \end{eqnarray*}
If we expand the quaternion equation $F\circ\tilde{H}= G+\gamma \pmod q$ for a given quaternion $\tilde{H}$, then we have a system of linear equations
\begin{eqnarray}\label{system}
\left\{ \begin{array}{l}
f_0\ast h_0+f_1\ast h_1+f_2\ast h_2-f_3\ast h_3-\gamma_{0}= g_0+q u_0,\\
f_0\ast h_1+f_1\ast h_0+f_2\ast h_3-f_3\ast h_2-\gamma_{1}= g_1+q u_1,\\
f_0\ast h_2-f_1\ast h_3+f_2\ast h_0+f_3\ast h_1-\gamma_{2}= g_2+q u_2,\\
f_0\ast h_3+f_1\ast h_2-f_2\ast h_1+f_3\ast h_0-\gamma_{3}= g_3+q u_3,
\end{array} \right.
\end{eqnarray}
 where $u_0, u_1, u_2, u_3 \in R'$.
We consider
$$
\mathcal{\tilde{H}}=\left[
\begin{array}{cccc}
\mathcal{H}_0  & \mathcal{H}_1 &  \mathcal{H}_2  & \mathcal{H}_3\\
\mathcal{H}_1 & \mathcal{H}_0 & -\mathcal{H}_3  & \mathcal{H}_2\\
\mathcal{H}_2  & \mathcal{H}_3  & \mathcal{H}_0 & -\mathcal{H}_1\\
-\mathcal{H}_3  & -\mathcal{H}_2 & \mathcal{H}_1 & \mathcal{H}_0
  \end{array}
  \right],$$
where $\mathcal{H}_i$ is the corresponding matrix of
$\overrightarrow{h_i}= [h_{i0}, h_{i1},\cdots ,h_{in^2-1}]$, for $i=0, \ldots,3$.
According to the above mentioned issues
\begin{eqnarray}\label{product equvalence}
\overrightarrow{F\circ \tilde{H}}&=&\overrightarrow{F}\mathcal{\tilde{H}}=
\left[ \begin{array}{cccc}
      \overrightarrow{f_0} & \overrightarrow{f_1} & \overrightarrow{f_2 } &  \overrightarrow{f_3}
\end{array}
 \right]
\left[
\begin{array}{cccc}
\mathcal{H}_0  & \mathcal{H}_1 &  \mathcal{H}_2  & \mathcal{H}_3\\
\mathcal{H}_1 & \mathcal{H}_0 & -\mathcal{H}_3  & \mathcal{H}_2\\
\mathcal{H}_2  & \mathcal{H}_3  & \mathcal{H}_0 & -\mathcal{H}_1\\
-\mathcal{H}_3  & -\mathcal{H}_2 & \mathcal{H}_1 & \mathcal{H}_0
  \end{array}
  \right]. \nonumber
\end{eqnarray}
Therefore, when we represent the elements of $\mathbb{A}$ by their coefficient vectors in $\mathbb{Z}^{4n^2}$, the set $\Lambda_{\textrm{BQTRU}}$ formed by the vectors $(\overrightarrow{G}, \overrightarrow{F}, -\overrightarrow{\rho(\gamma)}) $ that are also equivalent to
the set of all solutions $(G, F, \gamma) $ to Equations (\ref{system}), forms an integer lattice.
From the above equations one obtains that the BQTRU lattice, denoted by $\Lambda_{\textrm{BQTRU}}$, is generated by the following matrix
\begin{eqnarray}\label{matrix}
M_{\textrm{BQTRU}}=\left[\begin{array}{ccc}
                    qI_{4n^2} &  ~~0 &  ~~ 0 \\
                     \mathcal{\tilde{H}} &  I_{4n^2} & ~~  0\\
                     \mathfrak{D} & ~~ 0& ~  ~~ I_{4n^2}
                     \end{array}
                  \right],
\end{eqnarray}
where
$$ \mathfrak{D} =\left[\begin{array}{cccc}
                      D \quad & 0  \quad  & 0  \quad & 0\\
                      0  \quad & D \quad  & 0  \quad & 0\\
                      0  \quad & 0  \quad & D  \quad & 0\\
                      0  \quad & 0  \quad & 0  \quad & D
                       \end{array}\right].$$
 and $D $ is an ${n^2\times n^2} $ matrix which consists of $\overrightarrow{\lambda_{a, b}}$, coefficient vectors of all Lagrange interpolators $\lambda_{a, b}(x, y)$, such that $(a,b)\in E$.

We now consider that the public key $\tilde{H}$ is created using the private quaternions $F$,  $G$ and $\gamma$ and compute what happens when we multiply the $M_{\textrm{BQTRU}}$ by a carefully chosen vector.
\begin{proposition}
Assume that $F\circ\tilde{H}= G+\gamma\pmod q$, let $U\in \mathbb{A}$ be the quaternion that satisfies
\begin{equation}~\label{1}
F\circ\tilde{H}= G+\gamma +q U.
\end{equation}
Then
\begin{equation*}~\label{2}
(\overrightarrow{-U} , \overrightarrow{F}, -\overrightarrow{\rho(\gamma)})M_{\textrm{BQTRU}} = (\overrightarrow{G} , \overrightarrow{F}, -\overrightarrow{\rho(\gamma)}),
\end{equation*}
and therefore the vector $(\overrightarrow{G} , \overrightarrow{F}, -\overrightarrow{\rho(\gamma)})$ belongs to the BQTRU lattice.
\end{proposition}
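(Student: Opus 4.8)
The plan is a direct block-matrix computation, so the proof should be short; essentially all the work is lining up notation. Write the candidate as the row vector $v=(\overrightarrow{-U},\,\overrightarrow{F},\,-\overrightarrow{\rho(\gamma)})$, split into three length-$4n^2$ blocks matching the $3\times 3$ block structure of $M_{\textrm{BQTRU}}$ in Equation (\ref{matrix}). First I would multiply $v\,M_{\textrm{BQTRU}}$ out block column by block column. The middle block column of $M_{\textrm{BQTRU}}$ is $(0,\,I_{4n^2},\,0)$ and the last is $(0,\,0,\,I_{4n^2})$, so those two output blocks are immediately $\overrightarrow{F}$ and $-\overrightarrow{\rho(\gamma)}$, which already agree with the claimed right-hand side. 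Hence the whole content of the proposition sits in the first output block.

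That first block equals $-q\,\overrightarrow{U}+\overrightarrow{F}\,\mathcal{\tilde H}-\overrightarrow{\rho(\gamma)}\,\mathfrak D$. The next step is to replace $\overrightarrow{F}\,\mathcal{\tilde H}$ by $\overrightarrow{F\circ\tilde H}$, using the product-equivalence identity $\overrightarrow{F\circ\tilde H}=\overrightarrow{F}\,\mathcal{\tilde H}$ recorded just above the statement (multiplication in $R'$ realised as multiplication by a matrix whose rows permute the coefficient vector, assembled into the $4\times4$ block matrix $\mathcal{\tilde H}$). Then I substitute the defining relation $F\circ\tilde H=G+\gamma+qU$ of Equation (\ref{1}) to get $\overrightarrow{F}\,\mathcal{\tilde H}=\overrightarrow{G}+\overrightarrow{\gamma}+q\,\overrightarrow{U}$; the $q\,\overrightarrow{U}$ cancels the $-q\,\overrightarrow{U}$, and the first block becomes $\overrightarrow{G}+\overrightarrow{\gamma}-\overrightarrow{\rho(\gamma)}\,\mathfrak D$.

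So everything reduces to the identity $\overrightarrow{\rho(\gamma)}\,\mathfrak D=\overrightarrow{\gamma}$, and this is the step I expect to need the most care. The argument is: $\gamma\in J_q$ forces each of its four polynomial coordinates $\gamma_0,\dots,\gamma_3$ to lie in $Q_q$, so by Equation (\ref{linear combination}) (equivalently Lemma \ref{lemma3}) each $\gamma_i$ equals its Lagrange expansion $\sum_{(a,b)\in E}\gamma_i(a,b)\ast\lambda_{a,b}$, with the terms at points outside $T$ contributing $0$. Reading this on coefficient vectors, $\overrightarrow{\gamma_i}=\rho(\gamma_i)\,D$, where $D$ is the $n^2\times n^2$ matrix of all Lagrange-interpolant coefficient vectors; stacking the four coordinates and using $\overrightarrow{\rho(\gamma)}=(\rho(\gamma_0),\dots,\rho(\gamma_3))$ together with $\mathfrak D=\mathrm{diag}(D,D,D,D)$ yields $\overrightarrow{\gamma}=\overrightarrow{\rho(\gamma)}\,\mathfrak D$. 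The delicate bookkeeping is to pin down that the representative of $\gamma$ used in Equation (\ref{1}) is exactly this interpolant lift, so the identity holds over $\mathbb{Z}$ and not merely modulo $q$; if one works instead with an arbitrary lift, the discrepancy is a $\mathbb{Z}$-multiple of $q$ and is absorbed by the $qI_{4n^2}$ rows of $M_{\textrm{BQTRU}}$, so the conclusion still holds, but it is cleanest to fix the representative up front.

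Combining the three blocks gives $v\,M_{\textrm{BQTRU}}=(\overrightarrow{G},\,\overrightarrow{F},\,-\overrightarrow{\rho(\gamma)})$. Since $v$ has integer entries, this exhibits $(\overrightarrow{G},\,\overrightarrow{F},\,-\overrightarrow{\rho(\gamma)})$ as an integer linear combination of the rows of $M_{\textrm{BQTRU}}$, hence as an element of $\Lambda_{\textrm{BQTRU}}$, which is the remaining claim. The main obstacle, to repeat, is not any hard estimate but the careful matching of $\rho$, the ordering of $E$ (with the points of $T$ placed first), and the block shape of $\mathfrak D$ so that $\overrightarrow{\rho(\gamma)}\,\mathfrak D=\overrightarrow{\gamma}$ is literally true; once that is in place the computation is purely mechanical.
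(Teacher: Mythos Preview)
Your proposal is correct and follows essentially the same route as the paper: a block-column computation of $v\,M_{\textrm{BQTRU}}$, the identity $\overrightarrow{F}\,\mathcal{\tilde H}=\overrightarrow{F\circ\tilde H}$, the Lagrange-expansion argument giving $\overrightarrow{\rho(\gamma)}\,\mathfrak D=\overrightarrow{\gamma}$, and the substitution of Equation~(\ref{1}). Your extra care about which lift of $\gamma$ is used (and that any discrepancy is absorbed by the $qI_{4n^2}$ rows) is a point the paper leaves implicit, so if anything your write-up is slightly more thorough there.
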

\begin{proof}
When we multiply the vector $(\overrightarrow{-U} , \overrightarrow{F}, -\overrightarrow{\rho(\gamma)})$ by the columns of $M_{BQTRU}$, we have
\begin{eqnarray*}
(\overrightarrow{-U} , \overrightarrow{F}, -\overrightarrow{\rho(\gamma)})M_{BQTRU}&=&(\overrightarrow{-U} , \overrightarrow{F}, -\overrightarrow{\rho(\gamma)}) \left[\begin{array}{ccc}
                    qI_{4n^2} &  ~~0 &  ~~ 0 \\
                     \mathcal{\tilde{H}} &  I_{4n^2} & ~~  0\\
                     \mathfrak{D} & ~~ 0& ~  ~~ I_{4n^2}
                     \end{array}
                  \right] \\
                     &=&(-q\overrightarrow{U}+\overrightarrow{F}\mathcal{\tilde{H}}-\overrightarrow{\rho(\gamma)} \mathfrak{D}, \overrightarrow{F}, -\overrightarrow{\rho(\gamma)}).
\end{eqnarray*}
Based on Equation (\ref{product equvalence}), we have $\overrightarrow{F}\mathcal{\tilde{H}}=\overrightarrow{F\circ \tilde{H}}$ and also
\begin{eqnarray*}
\overrightarrow{\rho(\gamma)} \mathfrak{D}&=&\Big(\rho(\gamma_0) D, \dots, \rho(\gamma_3) D\Big)\\
&=&\Big(\sum_{i=1}^{n^2}\gamma_0(a_i,b_i)\overrightarrow{ \lambda_{a_i,b_i}}, \ldots , \sum_{i=1}^{n^2}\gamma_3(a_i,b_i)\overrightarrow{ \lambda_{a_i, b_i}}\Big)\\
&=&\Big(\overrightarrow{\gamma_0}, \ldots, \overrightarrow{\gamma_3}\Big)=\overrightarrow{\gamma}.
\end{eqnarray*}
We obtain  Equation (\ref{1}) resulting in $ -q U+F\circ \tilde{H} -\gamma=G$. Thus,
\begin{eqnarray*}
(\overrightarrow{-U} , \overrightarrow{F}, -\overrightarrow{\rho(\gamma)})M_{\textrm{BQTRU}}&=&(\overrightarrow{-q U+F\circ \tilde{H} -\gamma}, \overrightarrow{F}, -\overrightarrow{\rho(\gamma)})\\
  &=&(\overrightarrow{G} , \overrightarrow{F}, -\overrightarrow{\rho(\gamma)}),
\end{eqnarray*}
 and the private key is an integer linear combination of the rows of $M_{\textrm{BQTRU}}$.
 Hence, $$(\overrightarrow{G} , \overrightarrow{F}, -\overrightarrow{\rho(\gamma)})\in \Lambda_{BQTRU}.$$ $\hfill \square $
\end{proof}
The quaternion  $\gamma\in J_q$ is a linear combination of $\lambda_{a,b}$'s and therefore it may have many monomials with large coefficients. As a consequence, the Euclidean norm of  $\overrightarrow{\rho(\gamma)}$ may be very large and the private key $(\overrightarrow{G} , \overrightarrow{F}, -\overrightarrow{\rho(\gamma)}) \in \mathbb{Z}^{4n^2}\times\mathbb{Z}^{4n^2}\times\mathbb{Z}_q^{4n^2}$ may not be a short nonzero vector in the BQTRU lattice using the Euclidean norm.  Thus, we consider a hybrid metric on $\mathbb{Z}^{4n^2}\times\mathbb{Z}^{4n^2}\times\mathbb{Z}_q^{4n^2}$: Euclidean on the first two components and Hamming on the third one.
\begin{definition}
For every $(\overrightarrow{G}, \overrightarrow{F}, \overrightarrow{\rho(\gamma)})\in \mathbb{Z}^{4n^2}\times\mathbb{Z}^{4n^2}\times\mathbb{Z}_q^{4n^2}$,  the hybrid norm is defined as
  $$\|(\overrightarrow{G}, \overrightarrow{F}, \overrightarrow{\rho(\gamma)})\|_{hyb}:=\|(\overrightarrow{G}, \overrightarrow{F})\|_2+\|\overrightarrow{\rho(\gamma)}\|_H, $$
where $\|\cdot\|_2 $ and $\|\cdot\|_H $ are the Euclidean and Hamming norms, respectively.
\end{definition}
It can be easily shown that $\|\cdot\|_{hyb} $  has the properties of a norm.

\noindent In this case, since $\rho(\gamma)$ is determined by the element of the subset $T$, where $|T|$ is small, the Hamming norm of $\overrightarrow{\rho(\gamma)}$ is small. Moreover, the Euclidean norm of $\overrightarrow{F}$ and $\overrightarrow{G}$ is small. Therefore, the private key is a short vector in the lattice measured by the hybrid metric.

\textcolor{mycolor}{In the sequel, we present several results and show that  recovering the private key can be reduced to solving  an SVP in the BQTRU lattice with hybrid norm.}
\textcolor{mycolor}{\begin{lemma}[\cite{Cox}] \label{vandermonde}
Let $F$ be an arbitrary field. Given a set $U\subset F$ of size $n$, then any polynomial $f(x,y)\in F[x,y]$ of degree less than $n$ in each variable, is determined by its evaluation in $U\times U$.
\end{lemma}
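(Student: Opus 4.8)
This is the classical bivariate interpolation fact, so the plan is short. I would work with the $F$-vector space $V$ of polynomials in $F[x,y]$ of degree at most $n-1$ in each of $x$ and $y$; it has dimension $n^2$, and so does the space $F^{U\times U}$ of functions $U\times U\to F$. Since $|U\times U|=n^2$, the evaluation map $\mathrm{ev}_U\colon V\to F^{U\times U}$, $f\mapsto (f(a,b))_{(a,b)\in U\times U}$, is $F$-linear between finite-dimensional spaces of equal dimension, so it suffices to prove that it is injective; injectivity is exactly the assertion that $f$ is recoverable from its values on $U\times U$.

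First I would reduce to the one-variable case. Suppose $f\in V$ vanishes on $U\times U$ and write $f(x,y)=\sum_{j=0}^{n-1}c_j(x)\,y^j$ with each $c_j\in F[x]$ of degree at most $n-1$. For a fixed $a\in U$, the univariate polynomial $f(a,y)=\sum_{j}c_j(a)\,y^j$ has degree at most $n-1$ and vanishes at the $n$ distinct points of $U$; since a nonzero polynomial of degree at most $n-1$ over a field has at most $n-1$ roots, $f(a,y)$ is the zero polynomial and $c_j(a)=0$ for all $j$. Letting $a$ range over $U$, each $c_j$ is then a polynomial of degree at most $n-1$ with $n$ distinct roots, hence $c_j\equiv 0$, and therefore $f\equiv 0$. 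This proves injectivity, and the lemma follows.

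An equivalent argument, closer to the tools already introduced, uses the Combinatorial Nullstellensatz: if $f\neq 0$ choose a monomial $x^{t_1}y^{t_2}$ of $f$ of maximal total degree, so that $\deg f=t_1+t_2$, the coefficient of $x^{t_1}y^{t_2}$ is nonzero, and $t_1,t_2\le n-1<|U|$; taking $U_1=U_2=U$ the theorem produces $(a,b)\in U\times U$ with $f(a,b)\neq 0$, contradicting that $f$ vanishes on $U\times U$. One could also phrase the whole thing as a determinant computation: in the monomial basis $\{x^sy^t\}_{0\le s,t\le n-1}$ the matrix of $\mathrm{ev}_U$ is the Kronecker product of two $n\times n$ Vandermonde matrices built from $U$, whose determinant is (up to sign) a positive power of $\prod_{a\neq a'\in U}(a-a')\neq 0$. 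I do not expect any genuine obstacle here; the only point needing care is tracking the per-variable degree bounds so that in each invocation the relevant hypothesis — $n$ distinct points exceeding the pertinent degree — actually holds.
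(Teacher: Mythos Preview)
Your argument is correct. The paper does not actually prove this lemma; it is stated with a citation to \cite{Cox} and used as a black box, so there is no ``paper's own proof'' to compare against. Your first approach (reduce to the univariate case by freezing one variable, then apply the root bound twice) is the standard textbook proof and is complete as written. Your alternative via the Combinatorial Nullstellensatz is also fine and in fact matches the toolkit the paper has already introduced: the authors state that theorem just before this lemma and use exactly the same $U_1=U_2=\{b\in\mathbb{Z}_q: b^n=1\}$ trick in the remark preceding it, so that variant would blend in most naturally with the surrounding text. The Vandermonde/Kronecker formulation is equivalent and explains the lemma's label. No gaps.
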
}
\textcolor{mycolor}{Let $U=\{b\in \mathbb{Z}_q ~|~b^n=1\}$, where the condition $n |(q-1)$ yields $\mid U\mid=n $. Then,
 Lemma \ref{vandermonde} implies that any  polynomial $f$ in $R'$ is determined by its evaluation on $U\times U=E$ or equivalently by knowing the vector $\rho(f)$.}
\textcolor{mycolor}{\begin{corollary}
Let $f$ and $g$ be two polynomials in $R'$ such that $\rho(f)=\rho(g) $. Then,  $f=g$.
\end{corollary}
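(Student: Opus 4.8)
The plan is to derive this immediately from the preceding Lemma~\ref{vandermonde} (the bivariate Vandermonde/interpolation statement) together with the observation that the map $\rho$ records exactly the evaluation of a polynomial on $E = U \times U$. First I would note that every element of $R' = \mathbb{Z}_q[x,y]/\langle x^n-1, y^n-1\rangle$ has a canonical representative of degree at most $n-1$ in each of $x$ and $y$; this is what makes $R'$ a free module of rank $n^2$ and is already implicit in the Lagrange-basis discussion earlier in the paper. So both $f$ and $g$ can be taken to have degree less than $n$ in each variable.

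Next I would unwind the hypothesis $\rho(f) = \rho(g)$. By the definition of $\rho$ (restricted from quaternions to scalars, i.e. the componentwise evaluation map), this says precisely that $f(a_i, b_i) = g(a_i, b_i)$ for every point $(a_i, b_i) \in E$. Setting $h = f - g \in R'$, which also has degree less than $n$ in each variable, we get $h(a,b) = 0$ for all $(a,b) \in U \times U$. Now Lemma~\ref{vandermonde} applies directly: a polynomial of degree less than $n$ in each variable is uniquely determined by its values on $U \times U$ (where $|U| = n$, using $n \mid (q-1)$), and the zero polynomial is one such polynomial vanishing on all of $E$, so $h = 0$, i.e. $f = g$ in $R'$.

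I do not expect a genuine obstacle here — the corollary is essentially a restatement of Lemma~\ref{vandermonde} in the language of the evaluation map $\rho$. The only point requiring a word of care is the reduction to degree-bounded representatives: one must make sure that passing to the quotient by $\langle x^n-1, y^n-1\rangle$ does not lose information, which is exactly why we insist on the canonical representatives of degree $< n$ in each variable before invoking the lemma. Once that bookkeeping is in place, the argument is a two-line consequence, and it is worth remarking that it also re-proves uniqueness of the generator $\sigma$ of $Q$ from an earlier remark, since $\rho$ is injective on $R'$.
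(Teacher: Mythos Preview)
Your proposal is correct and is exactly the argument the paper intends: the corollary is stated without a separate proof, immediately after the sentence observing that Lemma~\ref{vandermonde} (with $U$ the set of $n$-th roots of unity in $\mathbb{Z}_q$, so $|U|=n$) implies every polynomial in $R'$ is determined by its values on $E=U\times U$, i.e.\ by $\rho$. Your write-up simply spells this out via $h=f-g$; the only cosmetic slip is that you wrote $R'=\mathbb{Z}_q[x,y]/\langle x^n-1,y^n-1\rangle$ whereas the paper reserves $R'$ for integer coefficients and $R'_q$ for the mod-$q$ ring---but since $\rho$ is defined with values in $\mathbb{Z}_q$, the statement is implicitly modulo $q$ anyway, and the argument is unaffected.
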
}
\textcolor{mycolor}{Obviously, this result is satisfied in the case that $F$ and $G$ are two quaternions in $\mathbb{A}$ where $\rho(F)=\rho(G)$.}

We use the next lemma to show that for  two fixed quaternions  $F$ and $G$ satisfying the public key equation (\ref{public key}), the vector $\overrightarrow{\rho(\gamma)}$ has the smallest Hamming norm among all quaternions that can be chosen as the third component in the private key $(\overrightarrow{G} , \overrightarrow{F}, -\overrightarrow{\rho(\gamma)})$.
\begin{lemma}\label{lemma1}
Let $F$ and $G$  be two fixed quaternions satisfying Equation (\ref{public key}). Then for any element $\tau'\neq\vartheta$ belonging to the set
$$\chi=\{\tau'\in \mathbb{A}~|~F\circ F'=F'\circ F=1 +\alpha'\circ\tau'\;(\bmod q),\;\;\tilde{H}= F'\ast G+ \tau'\;(\bmod q)\},$$
 we have $\|\overrightarrow{\rho(\tau')}\|_H > 4|T|$.
\end{lemma}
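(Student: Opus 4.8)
The plan is to combine the very rigid ``shape'' of the vectors $\rho(\vartheta)$ and $\rho(G)$ recorded in Equations~(\ref{G}) and~(\ref{vartheta}) with the injectivity of the evaluation map $\rho$ on $\mathbb{A}_q$ (the corollary following Lemma~\ref{vandermonde}, applied componentwise to the four coordinates of a quaternion). The underlying point is that any admissible third component $\tau'$ is forced to coincide with $\vartheta$ on all of $T$, so it already contributes $4|T|$ to the Hamming norm, and if it contributed nothing more it would be forced to equal $\vartheta$ outright.

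First I would fix $\tau'\in\chi$ with its associated $F'$ and write $\tau' = \tilde H - F'\circ G \pmod q$, so that $\rho(\tau') = \rho(\tilde H) - \rho(F')\bullet\rho(G)$ by Lemma~\ref{lemma3}. Because $G$ was selected in key generation to vanish on $T$, Equation~(\ref{G}) says the first $|T|$ quaternion entries of $\rho(G)$ are $0$; hence the same holds for $\rho(F')\bullet\rho(G)$, and therefore $\rho(\tau')$ and $\rho(\tilde H)$ agree in their first $|T|$ entries. Applying $\rho$ to the public-key identity~(\ref{public key}) and using $G(a_i,b_i)=0$ for $i\le|T|$ shows that these common entries equal those of $\rho(\vartheta)$, namely $q_iW$ by Equation~(\ref{vartheta}). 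Since $q_i\neq0$ in the field $\mathbb{Z}_q$ and $W$ was chosen with $w_0,w_1,w_2,w_3\neq0$, every component of $q_iW$ is nonzero, so each of the first $|T|$ points contributes exactly $4$ nonzero coordinates to $\overrightarrow{\rho(\tau')}$; this already gives $\|\overrightarrow{\rho(\tau')}\|_H\ge 4|T|$.

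To turn this into a strict inequality I would argue by contradiction. If $\|\overrightarrow{\rho(\tau')}\|_H=4|T|$, then $\tau'$ vanishes at every point of $E\setminus T$, so $\rho(\tau')$ is supported on its first $|T|$ entries, where it coincides with $\rho(\vartheta)$; hence $\rho(\tau')=\rho(\vartheta)$, and injectivity of $\rho$ on $\mathbb{A}_q$ forces $\tau'=\vartheta$, contradicting $\tau'\neq\vartheta$. Consequently $\overrightarrow{\rho(\tau')}$ has at least one further nonzero coordinate, coming from some point of $E\setminus T$, so $\|\overrightarrow{\rho(\tau')}\|_H\ge 4|T|+1>4|T|$.

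All the arithmetic here is routine; the part that needs care is the exact count ``$4$ per point of $T$'', i.e.\ that all four components of $q_iW$ are genuinely nonzero modulo $q$ --- which is exactly why key generation insists on $q_i\neq0$ and $w_0,w_1,w_2,w_3\neq0$ and why $q$ is prime. A second small subtlety is checking that every $\tau'\in\chi$ really reproduces the values $q_iW$ on $T$ (not merely agrees with $\tilde H$ there); this works because the public key $\tilde H$ and the quaternion $G$ are common to all members of $\chi$, so the vanishing of $G$ on $T$ propagates the constraint $\tilde H|_T=\vartheta|_T$. Once these are settled, no genuinely hard step remains.
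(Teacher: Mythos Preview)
Your proposal is correct and follows essentially the same route as the paper: both apply $\rho$ to the public-key equation, use the vanishing of $G$ on $T$ to force $\tau'(a_i,b_i)=q_iW$ for $i\le|T|$ (yielding the $4|T|$ contribution), and then argue that $\tau'\ne\vartheta$ forces an additional nonzero entry outside $T$. Your strict-inequality step via injectivity of $\rho$ is slightly more explicit than the paper's phrasing through $F'\circ G\ne F^{-1}\circ G$, but the underlying argument is the same.
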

\begin{proof}
We know $\tilde{H}= F^{-1}\circ G+ \vartheta\pmod q$, where $F\circ F^{-1}=F^{-1}\circ F= 1+\alpha\circ \sigma\pmod q$. Therefore, $F\circ F^{-1}=F^{-1}\circ F= 1+\alpha''\circ W\circ\sigma\pmod q$ too, where $\alpha''\in \mathbb{A}$. Then, $\vartheta=W\circ \sigma \in \chi$.
Using Equation  (\ref{G}) and (\ref{vartheta}), we have
\begin{eqnarray*}
\rho(\tilde{H})&=&\rho(F^{-1})\bullet \rho(G)+\rho(\vartheta)\\
&=&\Big(0, \ldots, 0,  F^{-1}(a_{|T|+1}, b_{|T|+1}) \circ G(a_{|T|+1}, b_{|T|+1}), \ldots, F^{-1}(a_{n^2}, b_{n^2})\circ G(a_{n^2}, b_{n^2})\Big)\\
&& +\> \Big(q_1W, \ldots, q_{|T|}W,  0,\ldots, 0\Big),
\end{eqnarray*}
that is,
\begin{eqnarray}
\rho(\tilde{H})&=& \Big(q_1W, \ldots, q_{|T|}W,  F^{-1}(a_{|T|+1}, b_{|T|+1})\circ G(a_{|T|+1}, b_{|T|+1}),\ldots \nonumber\\
&& \> \ldots, F^{-1}(a_{n^2}, b_{n^2})\circ G(a_{n^2}, b_{n^2})\Big). \label{rho(H)}
\end{eqnarray}
If we consider an arbitrary $\tau'\in \chi$, where $\tau'\neq\vartheta$, then
$$F\circ F'=F'\circ F= 1+\alpha'\circ\tau'\;(\bmod q),\quad\tilde{H}= F'\circ G+ \tau'\;(\bmod q).$$
Similarly,
\begin{eqnarray*}
\rho(\tilde{H})&=&\rho(F')\bullet \rho(G)+\rho(\tau')\\
&=&\Big(0, \ldots, 0,  F'(a_{|T|+1}, b_{|T|+1}) \circ G(a_{|T|+1}, b_{|T|+1}), \ldots, F'(a_{n^2}, b_{n^2})\circ G(a_{n^2}, b_{n^2})\Big)\\
&& +\rho(\tau').
\end{eqnarray*}
As a result  $\tau'(a_i, b_i)=q_i W\neq 0$, for $i=1, \ldots, |T|$. On the other hand, $\tau'\neq \vartheta$ and $F'(a_i, b_i)\circ G(a_i, b_i) \neq F^{-1}(a_i, b_i)\circ G(a_i, b_i)$ for some $i=|T|+1, \ldots, n^2$. Therefore, $\rho(\tau')$ should be nonzero on at least one of the $ n^2-|T|$ last positions. Hence $\|\overrightarrow{\rho(\tau')}\|_H > 4|T|$.$\hfill \square $
\end{proof}
\begin{proposition}
Let $F$ and $G$  be two fixed quaternions satisfying Equation (\ref{public key}). Then, $\vartheta=W\circ\sigma$ has the smallest Hamming norm in $\chi$. Also, for any $\tau'\in \chi$ such that $\tau'\neq\vartheta$
 $$ \|\overrightarrow{\rho(F\circ \tau')}\|_H \geq \|\overrightarrow{\rho(F\circ \vartheta)}\|_H,$$
where, in accordance with Equation (\ref{eq3}), we have $\overrightarrow{\rho(\gamma)}=\overrightarrow{\rho(F\circ \vartheta)}$.
\end{proposition}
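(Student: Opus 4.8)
The plan is to treat the two assertions in turn, relying on Lemma~\ref{lemma1}, on the multiplicativity of the evaluation map $\rho$ (Lemma~\ref{lemma3}), and on the explicit shapes $\rho(\sigma)=(q_1,\ldots,q_{|T|},0,\ldots,0)$, $\rho(G)$ and $\rho(\vartheta)$ recorded in Equations~(\ref{G}) and~(\ref{vartheta}).

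For the first assertion I would begin by computing $\|\overrightarrow{\rho(\vartheta)}\|_H$ exactly. Since $\sigma$ is a scalar (central) polynomial, the quaternion $\vartheta=W\circ\sigma$ has components $\vartheta_\ell=w_\ell\,\sigma$ for $\ell=0,\ldots,3$, so $\rho(\vartheta_\ell)=w_\ell\,\rho(\sigma)=w_\ell(q_1,\ldots,q_{|T|},0,\ldots,0)$. Because every $w_\ell\neq 0$ and every $q_i\neq 0$ in $\mathbb{Z}_q$, each block $\rho(\vartheta_\ell)$ has precisely $|T|$ nonzero entries, hence $\|\overrightarrow{\rho(\vartheta)}\|_H=4|T|$. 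Since $\vartheta\in\chi$ (as verified inside the proof of Lemma~\ref{lemma1}) and Lemma~\ref{lemma1} gives $\|\overrightarrow{\rho(\tau')}\|_H>4|T|$ for every $\tau'\in\chi$ with $\tau'\neq\vartheta$, it follows at once that $\vartheta$ is the element of $\chi$ of minimal Hamming norm.

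For the inequality $\|\overrightarrow{\rho(F\circ\tau')}\|_H\ge\|\overrightarrow{\rho(F\circ\vartheta)}\|_H$ I would compare the two evaluation vectors point by point over $E$, keeping the convention that the first $|T|$ points of $E$ are exactly the points of $T$. By Lemma~\ref{lemma3} one has $(F\circ\tau')(a_i,b_i)=F(a_i,b_i)\circ\tau'(a_i,b_i)$ and likewise for $\vartheta$, so it suffices to control $\tau'(a_i,b_i)$ and $\vartheta(a_i,b_i)$. The proof of Lemma~\ref{lemma1} already establishes that every $\tau'\in\chi$ satisfies $\tau'(a_i,b_i)=q_iW$ for $i=1,\ldots,|T|$, and by Equation~(\ref{vartheta}) the same holds for $\vartheta$; hence $(F\circ\tau')(a_i,b_i)=(F\circ\vartheta)(a_i,b_i)$ for all $i\le|T|$. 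On the remaining positions $i=|T|+1,\ldots,n^2$, Equation~(\ref{vartheta}) gives $\vartheta(a_i,b_i)=0$, so $(F\circ\vartheta)(a_i,b_i)=0$ there. Thus $\overrightarrow{\rho(F\circ\vartheta)}$ is supported inside the first $4|T|$ coordinates and agrees coordinatewise with $\overrightarrow{\rho(F\circ\tau')}$ on that block, so counting nonzero coordinates yields the claimed inequality; and $\overrightarrow{\rho(F\circ\vartheta)}=\overrightarrow{\rho(\gamma)}$ by Equation~(\ref{eq3}), which gives the stated identification of the right-hand side.

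The whole argument is essentially coordinate bookkeeping, so the one place I would slow down is the input borrowed from Lemma~\ref{lemma1}: that $\vartheta\in\chi$ and that \emph{every} $\tau'\in\chi$ is pinned down to $q_iW$ on the $T$-coordinates. I would cite that part of the earlier proof rather than redo it. A minor subtlety is that $F$ need not be invertible at each point of $E$, but the argument never inverts $F$ pointwise---it uses only $\rho$ and the product rule of Lemma~\ref{lemma3}---so this raises no difficulty.
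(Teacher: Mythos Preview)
Your proof is correct. The first assertion is handled exactly as in the paper: compute $\|\overrightarrow{\rho(\vartheta)}\|_H=4|T|$ from Equation~(\ref{vartheta}) and invoke Lemma~\ref{lemma1}.

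For the second assertion you take a slightly different, more direct route than the paper. The paper starts from the defining relations of $\chi$, derives the identity $\alpha'\circ\tau'\circ G+F\circ\tau'=\sigma\circ\alpha\circ G+F\circ\vartheta$, applies $\rho$, uses $\rho(\sigma\circ\alpha\circ G)=0$ to obtain
\[
\rho(F\circ\tau')=\rho(F\circ\vartheta)-\rho(\alpha'\circ\tau'\circ G),
\]
and then (implicitly) uses that the two right-hand terms have disjoint supports to get the Hamming inequality. You instead go point by point: quoting from the proof of Lemma~\ref{lemma1} that $\tau'(a_i,b_i)=q_iW=\vartheta(a_i,b_i)$ for $i\le|T|$, you get $(F\circ\tau')(a_i,b_i)=(F\circ\vartheta)(a_i,b_i)$ on $T$, while $\vartheta$ vanishes outside $T$; hence the support of $\overrightarrow{\rho(F\circ\vartheta)}$ is contained in that of $\overrightarrow{\rho(F\circ\tau')}$. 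This avoids manipulating $\alpha'$ and $G$ altogether and makes the support-containment reasoning explicit, at the cost of leaning on an intermediate step proved inside Lemma~\ref{lemma1} rather than re-deriving it. Both arguments are ultimately the same support comparison; yours is just the shorter path to it.
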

\begin{proof}
Since  $\vartheta\in \chi$ and $\|\overrightarrow{\rho(\vartheta)}\|_H = 4|T|$ (according to Equation (\ref{vartheta})), then  $\vartheta$ has the smallest Hamming norm in $\chi$. Let $\tau'\in \chi$,  where $\tau'\neq\vartheta$. Then, we have
$$F\circ F'=F'\circ F= 1+\alpha'\circ\tau'\;(\bmod q),\quad \tilde{H}= F'\circ G+ \tau'\;(\bmod q).$$
Therefore,
$$F\circ \tilde{H}= G+(\alpha'\circ\tau'\circ G+ F\circ\tau')\;(\bmod q).$$
Moreover, $\tilde{H}= F^{-1}\circ G+ \vartheta\pmod q$ and by considering Equation (\ref{public equation}) and (\ref{gamma}), we have
$$F\circ \tilde{H}= G+\gamma \;\;(\bmod q), $$
 where $\gamma=(\sigma\circ \alpha\circ G+ F\circ\vartheta)$. Thus,
\begin{eqnarray*}
\alpha'\circ\tau'\circ G+ F\circ\tau'&=&\sigma\circ\alpha\circ G+ F\circ\vartheta,\\
\rho(\alpha'\circ\tau'\circ G+ F\circ\tau')&=&\rho(\sigma\circ\alpha\circ G+ F\circ\vartheta),\\
\rho(\alpha'\circ\tau'\circ G)+\rho( F\circ\tau')&=&\rho(F\circ\vartheta),\\
\rho( F\circ\tau')&=&\rho(F\circ\vartheta)-\rho(\alpha'\circ\tau'\circ G).
\end{eqnarray*}
Hence, we can conclude that
$$ \|\overrightarrow{\rho(F\circ \tau')}\|_H \geq \|\overrightarrow{\rho(F\circ \vartheta)}\|_H. $$
 $\hfill \square $
 \end{proof}
An important consequence of this proposition is that for fixed quaternions $F$ and $G$  that satisfy Equation (\ref{public key}),
$\vartheta\in \chi$  is the best choice in the public key equation (\ref{public key}) and $\overrightarrow{\rho(\gamma)}=\overrightarrow{\rho(F\circ \vartheta)}$ has the smallest Hamming norm among all quaternions that can be chosen as the third component in the solution of the public key equation. Consequently,
$ \|\overrightarrow{G}, \overrightarrow{F}, \overrightarrow{\rho(F\circ \vartheta)}\|_{hyb} < \|\overrightarrow{G}, \overrightarrow{F}, \overrightarrow{\rho( F\circ \tau')}\|_{hyb}$, for any $\tau'\in \chi$.

In the following theorem which is the most important result of this section, we prove that any reduction in the Hamming norm of the third component of a solution of the public key equation (\ref{public key}) like $(\overrightarrow{G''}, \overrightarrow{F''}, \overrightarrow{\rho(F''\circ \tau'')})$, where  $\tilde{H}=F''^{-1}\circ G''+\tau''\,\,(mod\,\,q)$ and $F''\circ F''^{-1}=F''^{-1}\circ F''=1+\alpha''\circ\tau''\pmod q$, results in a considerable increase in the hybrid norm of the solution $(\overrightarrow{G''}, \overrightarrow{F''}, \overrightarrow{\rho(F''\circ \tau'')})$.
\begin{theorem}\label{original teorem}
Assume that there exist $F''$, $G'' \in \mathbb{A}$ and $\tau'' \in J_q$ such that  $\tilde{H}=F''^{-1}\circ G''+\tau''\,\,(mod\,\,q)$,  $F''\circ F''^{-1}=F''^{-1}\circ F''=1+\alpha''\circ\tau''\pmod q$, for some $\alpha''\in \mathbb{A}$ (that is, they are a solution of the public key equation) and  $\|\overrightarrow{\rho(\tau'')}\|_H< \|\overrightarrow{\rho(\vartheta)}\|_H$. Then
$$ \Big\|\overrightarrow{G''}, \overrightarrow{F''}, \overrightarrow{\rho(F''\circ \tau'')}\Big\|_{hyb}> \Big\|\overrightarrow{G}, \overrightarrow{F}, \overrightarrow{\rho( F\circ \vartheta)}\Big\|_{hyb}.$$
\end{theorem}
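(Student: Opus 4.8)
The plan is to compare the two sides by pinning the right-hand side down as an explicitly small quantity, polynomial in $n$, and then showing that the hypothesis $\|\overrightarrow{\rho(\tau'')}\|_H<\|\overrightarrow{\rho(\vartheta)}\|_H$ forces the Euclidean part of the left-hand side to be far larger; the Hamming term in the third component can only help. First I would bound the right-hand side. Since $f_0,f_1,f_2,f_3\in L_f$ and $g_0,g_1,g_2,g_3\in L_g$ each carry exactly $2d$ nonzero coefficients, all in $\{1,-1\}$, we have $\|(\overrightarrow{G},\overrightarrow{F})\|_2=\sqrt{16d}$; and since $F\circ\vartheta\in J_q$ vanishes on $E\setminus T$, the vector $\overrightarrow{\rho(F\circ\vartheta)}=\overrightarrow{\rho(\gamma)}$ is supported on the first $|T|$ blocks of four coordinates, so $\|\overrightarrow{\rho(F\circ\vartheta)}\|_H\le 4|T|$. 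Hence the right-hand side is at most $\sqrt{16d}+4|T|$, which with $d\approx n^2/7$ and $|T|<n$ is $O(n)$, in particular far below $q$.

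Next I would use the hypothesis to locate a point of $T$ at which $\tau''$ degenerates. Because $\tau''\in J_q$, the vector $\overrightarrow{\rho(\tau'')}$ is supported on the first $|T|$ four-coordinate blocks and so has at most $4|T|$ nonzero entries; the assumption $\|\overrightarrow{\rho(\tau'')}\|_H<\|\overrightarrow{\rho(\vartheta)}\|_H=4|T|$ then produces a point $(a_{i_0},b_{i_0})\in T$ and an index $l_0\in\{0,1,2,3\}$ for which the $l_0$-th coordinate of the quaternion $\tau''(a_{i_0},b_{i_0})$ is zero. Evaluating the public-key equation $\tilde H=F''^{-1}\circ G''+\tau''\pmod q$ at $(a_{i_0},b_{i_0})$, and using $\rho(\tilde H)_{i_0}=q_{i_0}W$ from Equation (\ref{rho(H)}) together with Lemma \ref{lemma3}, gives $F''^{-1}(a_{i_0},b_{i_0})\circ G''(a_{i_0},b_{i_0})=q_{i_0}W-\tau''(a_{i_0},b_{i_0})$, whose $l_0$-th coordinate equals $q_{i_0}w_{l_0}\ne 0$ because $W$ was chosen with all coordinates nonzero. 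Hence this quaternion is nonzero, so $G''(a_{i_0},b_{i_0})\ne 0$, whereas the honest $G$ satisfies $G|_T\equiv 0$ by Equation (\ref{G}); thus $G''$ cannot agree with $G$ on $T$. At every point $(a_i,b_i)$ with $i>|T|$ we have $\tau''(a_i,b_i)=\sigma(a_i,b_i)=0$, so $F(a_i,b_i)$ and $F''(a_i,b_i)$ are invertible there (for $F$ by its choice, for $F''$ because $F''^{-1}$ inverts it modulo $q$ away from $T$), and equating the two expressions for $\tilde H$ yields $G''(a_i,b_i)=F''(a_i,b_i)\circ F^{-1}(a_i,b_i)\circ G(a_i,b_i)$.

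Now I would split on whether $F''\equiv F\pmod{J_q}$. If it is, the identity above gives $G''(a_i,b_i)=G(a_i,b_i)$ for all $i>|T|$, so $\overrightarrow{G''}-\overrightarrow{G}$ reduces modulo $q$ to a nonzero element of $Q_q$ (nonzero by the previous paragraph). Invoking the basic design feature of the scheme — the one that makes standard lattice reduction ineffective against $J$ and that underpins the decryption-security argument, namely that every vector of $\Lambda_Q$ outside $q\mathbb{Z}^{n^2}$ has large Euclidean norm (the $\overrightarrow{\lambda_{a,b}}$ and their nontrivial integer combinations are dense with entries of size $\sim q$, hence of norm of order $qn$) — we get $\|\overrightarrow{G''}\|_2\ge\|\overrightarrow{G''}-\overrightarrow{G}\|_2-\|\overrightarrow{G}\|_2$, which far exceeds $\sqrt{16d}+4|T|$. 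If instead $F''\not\equiv F\pmod{J_q}$, then off $T$ the vector $\rho(G'')$ is the coordinatewise product of $\rho(F'')$ with the fixed public vector $\rho(\tilde H)$; unless $\overrightarrow{F''}$ is already long, $\rho(F'')$ is the evaluation vector of a short polynomial, and since $\rho(\tilde H)$ has entries spread over $\mathbb{Z}_q$, the resulting polynomial $G''$ is long with overwhelming probability. In either case $\|(\overrightarrow{G''},\overrightarrow{F''})\|_2>\sqrt{16d}+4|T|$; adding the nonnegative Hamming term of the third coordinate, the left-hand side exceeds $\sqrt{16d}+4|T|$, which by the first step is at least the right-hand side.

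The main obstacle is the second case, $F''\not\equiv F\pmod{J_q}$: turning the heuristic ``a short $F''$ that is not the genuine secret produces, via multiplication by the random-looking $\tilde H$, a long $G''$'' into a clean deterministic bound is the delicate point, and it is precisely where one has to make precise — or, as the scheme does throughout, simply assume — that $Q_q$ and the associated decryption lattice contain no unexpectedly short nonzero vectors modulo $q$. Everything else is routine bookkeeping with the evaluation map $\rho$, the Lagrange interpolants $\lambda_{a,b}$, the splitting of $x^n-1$ over $\mathbb{Z}_q$ guaranteed by $n\mid(q-1)$, and Lemmas \ref{lemma3}, \ref{lemma1} and \ref{vandermonde}.
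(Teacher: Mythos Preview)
Your argument reaches the same heuristic conclusion as the paper but by a genuinely different route. The paper does not perform your case split on whether $F''\equiv F\pmod{J_q}$. Instead, after deriving the constraint $F''^{-1}(a_i,b_i)\circ G''(a_i,b_i)=F^{-1}(a_i,b_i)\circ G(a_i,b_i)$ for $i>|T|$, it simply \emph{takes} $F''^{-1}(a_i,b_i)=F^{-1}(a_i,b_i)$ and $G''(a_i,b_i)=G(a_i,b_i)$ for those~$i$, declaring this to be ``without loss of generality''; in effect, it enforces your first case by fiat. From there the paper writes $F''=F+\sum_{i\le|T|}(\,\cdot\,)\lambda_{a_i,b_i}$ and $G''=G+\sum_{i\le|T|}(\,\cdot\,)\lambda_{a_i,b_i}$, expands $\|\overrightarrow{F''}\|_2^2$ as $\|\overrightarrow{F}\|_2^2+\|\text{Lagrange part}\|_2^2+2(\text{cross term})$, and argues informally that the Lagrange part (dense, entries of size~$q$) dominates the cross term (sparse because $F$ is ternary), concluding $\|\overrightarrow{F''}\|_2>\|\overrightarrow{F}\|_2$ and likewise for~$G''$. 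So the paper's proof is really only your first case, handled by a direct norm expansion rather than your triangle-inequality-plus-lattice-minimum argument; your second case, $F''\not\equiv F\pmod{J_q}$, is exactly what the paper's ``without loss of generality'' suppresses. You are more candid than the paper about this gap: the WLOG is not justified (the theorem concerns a \emph{given} alternative solution, not one the prover may choose), and your observation that this is where one must either assume there are no short nonzero vectors in $\Lambda_Q\setminus q\mathbb{Z}^{n^2}$ or invoke the pseudorandomness of multiplication by $\tilde H$ is the correct diagnosis. Neither argument is fully rigorous; yours isolates the obstacle more cleanly, while the paper's Lagrange-expansion computation makes the norm blow-up more explicit in the one case it does treat.
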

\begin{proof}
Let $supp(F)=\{(a, b)\in E ~|~ F(a,b)\neq 0 \}$, for any $F \in \mathbb{A}$. Using  Equation (\ref{vartheta}), we have  $|supp(\vartheta)|=|T|$.
Without loss of generality we consider that $|supp(\tau'')|=|T|-1 $ and $\rho(\tau'')$ is nonzero over the first $|T|-1 $ positions. Similar to the  initial considerations that we presented for private key and public key in our proposed scheme, if
 $F''=f''_0+ f''_1 i+f''_2 j+f''_3 k$ and $ G''=g''_0+ g''_1 i+g''_2 j+g''_3 k$ in $ \mathbb{A}$ and $\tau'' \in J_q$ are  a solution of the public key equation (\ref{public key}) for given public key $\tilde{H}$, then
\begin{eqnarray}\label{eq8}
\tilde{H}=F''^{-1}\circ G''+\tau''\;(\bmod q), \quad F''\circ F''^{-1}=F''^{-1}\circ F''=1+\alpha''\circ\tau''\;(\bmod q),
\end{eqnarray}
where,
\begin{eqnarray*}
&&supp(\tau'')=\bigcap\nolimits_{i=0}^3\{(a, b) \in E ~|~  g''_i(a, b)=0 \},\\
&&\{(a, b) \in E ~|~ N(F'')(a, b)=0 \}\subset supp(\tau'').
\end{eqnarray*}
Therefore,
\begin{eqnarray*}
\rho(\tilde{H})&=&\rho(F''^{-1})\bullet \rho(G'')+\rho(\tau'')\\
 &=&(0, \ldots, 0,  F''^{-1}(a_{|T|}, b_{|T|}) \circ G''(a_{|T|}, b_{|T|}), \ldots, F''^{-1}(a_{n^2}, b_{n^2})\circ G''(a_{n^2}, b_{n^2})\Big)\\
 && +\Big(\tau''(a_1, b_1), \ldots,\tau''(a_{|T|-1}, b_{|T|-1}), 0, \ldots, 0 \Big)\\
 &=&\Big(\tau''(a_1, b_1), \ldots,\tau''(a_{|T|-1}, b_{|T|-1}) ,  F''^{-1}(a_{|T|}, b_{|T|}) \circ G''(a_{|T|}, b_{|T|}), \ldots\\
 && \ldots, F''^{-1}(a_{n^2}, b_{n^2})\circ G''(a_{n^2}, b_{n^2})\Big).
\end{eqnarray*}
On the other hand, Equation (\ref{rho(H)}) gives
\begin{eqnarray*}
\rho(\tilde{H})&=& \Big(q_1W, \ldots, q_{|T|}W,  F^{-1}(a_{|T|+1}, b_{|T|+1})\circ G(a_{|T|+1}, b_{|T|+1}),\ldots \nonumber\\
&& \> \ldots, F^{-1}(a_{n^2}, b_{n^2})\circ G(a_{n^2}, b_{n^2})\Big).
\end{eqnarray*}
Thus, we obtain
\begin{eqnarray}
&&\tau''(a_i, b_i)=q_i W=\vartheta(a_i, b_i), \qquad i=1, \ldots, |T|-1;\label{eq7}\\
&&F''^{-1}(a_{|T|}, b_{|T|}) \circ G''(a_{|T|}, b_{|T|})=q_{|T|} W, \qquad i=|T|;\label{eq5}\\
&&F''^{-1}(a_{i}, b_{i})\circ G''(a_{i}, b_{i})=F^{-1}(a_{i}, b_{i})\circ G(a_{i}, b_{i}), \qquad i=|T|+1, \ldots, n^2. \label{eq6}
\end{eqnarray}
Therefore, $\vartheta=\tau''+(q_{|T|} W)\circ\lambda_{a_{|T|}, b_{|T|}}$ and $\tilde{H}=F^{-1}\circ G+\tau''+(q_{|T|} W)\circ\lambda_{a_{|T|}, b_{|T|}}\pmod q$. As a result
\begin{eqnarray}\label{eq4}
F''^{-1}\circ G''=F^{-1}\circ G+(q_{|T|} W)\circ\lambda_{a_{|T|}, b_{|T|}}.
\end{eqnarray}
We can consider
\begin{eqnarray*}
F''^{-1}&=& F^{-1}+\sum\nolimits_{i= 1}^{n^2} \Big(F''^{-1}(a_{i}, b_{i})-F^{-1}(a_{i}, b_{i})\Big)\circ\lambda_{a_{i}, b_{i}},\\
G''&=&G+\sum\nolimits_{i= 1}^{n^2} \Big(G''(a_{i}, b_{i})-G(a_{i}, b_{i})\Big)\circ\lambda_{a_{i}, b_{i}},
\end{eqnarray*}
since by  Lemma \ref{vandermonde} they are equal over the set $E$. Next,
\begin{eqnarray}
F''^{-1}\circ G''&=&F^{-1}\circ G+ \sum\nolimits_{i= 1}^{n^2} \Big(F''^{-1}(a_{i}, b_{i})\circ G''(a_{i}, b_{i})-F^{-1}(a_{i}, b_{i})\circ G(a_{i}, b_{i})\Big)\circ\lambda_{a_{i}, b_{i}} \nonumber\\
&=&F^{-1}\circ G+ \sum\nolimits_{i= |T|+1}^{n^2} \Big(F''^{-1}(a_{i}, b_{i})\circ G''(a_{i}, b_{i})-F^{-1}(a_{i}, b_{i})\circ G(a_{i}, b_{i})\Big)\circ\lambda_{a_{i}, b_{i}}\nonumber\\
&&+\Big(F''^{-1}(a_{|T|}, b_{|T|})\circ G''(a_{|T|}, b_{|T|})\Big)\circ\lambda_{a_{|T|}, b_{|T|}}\nonumber\\
&=&F^{-1}\circ G+ \sum\nolimits_{i= |T|+1}^{n^2} \Big(F''^{-1}(a_{i}, b_{i})\circ G''(a_{i}, b_{i})-F^{-1}(a_{i}, b_{i})\circ G(a_{i}, b_{i})\Big)\circ\lambda_{a_{i}, b_{i}}\nonumber\\
&&+\Big(q_{|T|} W\Big)\circ\lambda_{a_{|T|}, b_{|T|}}.\nonumber
\end{eqnarray}
From the last equation and Equation (\ref{eq4}) we have
$$\sum\nolimits_{i= |T|+1}^{n^2} \Big(F''^{-1}(a_{i}, b_{i})\circ G''(a_{i}, b_{i})-F^{-1}(a_{i}, b_{i})\circ G(a_{i}, b_{i})\Big)\circ\lambda_{a_{i}, b_{i}}=0,$$
that is, $F''^{-1}(a_{i}, b_{i})$ and $G''(a_{i}, b_{i})$ satisfy Equation (\ref{eq6}). Without loss of generality, we take $F''^{-1}(a_{i}, b_{i})=F^{-1}(a_{i}, b_{i})$ and $G''(a_{i}, b_{i})= G(a_{i}, b_{i})$, for $i=|T|+1, \ldots, n^2 $, then
\begin{eqnarray}
F''^{-1}&=& F^{-1}+\sum\nolimits_{i= 1}^{|T|} \big(F''^{-1}(a_{i}, b_{i})-F^{-1}(a_{i}, b_{i})\big)\circ\lambda_{a_{i}, b_{i}},\\
G''&=&G+\sum\nolimits_{i= 1}^{|T|} \big(G''(a_{i}, b_{i})-G(a_{i}, b_{i})\big)\circ\lambda_{a_{i}, b_{i}}.\label{G''}
\end{eqnarray}
Next, we prove that
\begin{eqnarray}\label{F}
F''= F+\sum\nolimits_{i= 1}^{|T|} \big(F''(a_{i}, b_{i})-F(a_{i}, b_{i})\big)\circ\lambda_{a_{i}, b_{i}}.\label{F''}
\end{eqnarray}
Indeed, we show that $F''$ as above satisfies the equation $F''\circ F''^{-1}=F''^{-1}\circ F''=1+\theta\circ\tau''\pmod q$, where $\theta \in\mathbb{A}$. We know  $F\circ F^{-1}=F^{-1}\circ F= 1+\beta\circ\vartheta\pmod q$, for some $\beta\in \mathbb{A}$. Therefore, in accordance to Equation (\ref{eq8}) and (\ref{eq7}), we have
\begin{eqnarray*}
\lefteqn{F''\circ F''^{-1}}\\
&&= F\circ F^{-1}+\sum\nolimits_{i= 1}^{|T|} \Big(F''(a_{i}, b_{i})\circ F''^{-1}(a_{i}, b_{i})-F(a_{i}, b_{i})\circ F^{-1}(a_{i}, b_{i})\Big)\circ\lambda_{a_{i}, b_{i}}\\
&&= 1+ \beta\circ\vartheta+  \Big(F''(a_{|T|}, b_{|T|})\circ F''^{-1}(a_{|T|}, b_{|T|})-F(a_{|T|}, b_{|T|})\circ F^{-1}(a_{|T|}, b_{|T|})\Big)\circ\lambda_{a_{|T|}, b_{|T|}}\\
&&\quad  + \sum\nolimits_{i= 1}^{|T|-1}  \Big(F''(a_{i}, b_{i})\circ F''^{-1}(a_{i}, b_{i})-F(a_{i}, b_{i})\circ F^{-1}(a_{i}, b_{i})\Big)\circ\lambda_{a_{i}, b_{i}}\\
&&= 1+ \beta\circ\vartheta+ \Big(1-\big(1+\beta(a_{|T|}, b_{|T|})\circ\vartheta(a_{|T|}, b_{|T|})\big)\circ\lambda_{a_{|T|}, b_{|T|}} \Big)\\
&&\quad  + \sum\nolimits_{i= 1}^{|T|-1} \Big(\big(1+\alpha''(a_{i}, b_{i})\circ \tau''(a_{i}, b_{i})\big)-\big(1+ \beta(a_{i}, b_{i})\circ \vartheta(a_{i}, b_{i})\big)\Big)\circ\lambda_{a_{i}, b_{i}}\\
&&= 1+ \beta\circ\vartheta-\beta(a_{|T|}, b_{|T|})\circ\vartheta(a_{|T|}, b_{|T|})\circ\lambda_{a_{|T|}, b_{|T|}}
 + \sum\nolimits_{i= 1}^{|T|-1} \xi(a_{i}, b_{i})\circ \tau''(a_{i}, b_{i})\circ\lambda_{a_{i}, b_{i}}\\
&&= 1+ \sum\nolimits_{i= 1}^{|T|} \beta(a_{i}, b_{i})\circ\vartheta(a_{i}, b_{i})\circ\lambda_{a_{i}, b_{i}}-\beta(a_{|T|}, b_{|T|})\circ\vartheta(a_{|T|}, b_{|T|})\circ\lambda_{a_{|T|}, b_{|T|}}\\
&& \quad + \sum\nolimits_{i= 1}^{|T|-1} \xi(a_{i}, b_{i})\circ \tau''(a_{i}, b_{i})\circ\lambda_{a_{i}, b_{i}}\\
&&= 1+ \sum\nolimits_{i= 1}^{|T|-1} \beta(a_{i}, b_{i})\circ\vartheta(a_{i}, b_{i})\circ\lambda_{a_{i}, b_{i}}+ \sum\nolimits_{i= 1}^{|T|-1} \xi(a_{i}, b_{i})\circ \tau''(a_{i}, b_{i})\circ\lambda_{a_{i}, b_{i}}\\
&&= 1+ \sum\nolimits_{i= 1}^{|T|-1} \beta(a_{i}, b_{i})\circ\tau''(a_{i}, b_{i})\circ\lambda_{a_{i}, b_{i}}+ \sum\nolimits_{i= 1}^{|T|-1} \xi(a_{i}, b_{i})\circ \tau''(a_{i}, b_{i})\circ\lambda_{a_{i}, b_{i}}\\
&&= 1+ \tau''\circ \Big(\sum\nolimits_{i= 1}^{|T|-1} \beta(a_{i}, b_{i})\circ\lambda_{a_{i}, b_{i}}\Big)+ \tau''\circ \Big(\sum\nolimits_{i= 1}^{|T|-1} \xi(a_{i}, b_{i})\circ\lambda_{a_{i}, b_{i}}\Big)\\
&&= 1+\beta \circ \tau''+\xi \circ \tau''=1+\theta \circ \tau''.
\end{eqnarray*}
Thus,
$F''= F+\sum\nolimits_{i= 1}^{|T|} \big(F''(a_{i}, b_{i})-F(a_{i}, b_{i})\big)\circ\lambda_{a_{i}, b_{i}}$ and
\begin{eqnarray*}
\|\overrightarrow{F''}\|_2^2&=&\|\overrightarrow{F}\|_2^2+\Big\|\sum\nolimits_{i= 1}^{|T|} \overrightarrow{\big(F''(a_{i}, b_{i})-F(a_{i}, b_{i})\big)\circ\lambda_{a_{i}, b_{i}}}\Big\|_2^2\\
&&+2\Big(\overrightarrow{F}\cdot \big(\sum\nolimits_{i= 1}^{|T|} \overrightarrow{\big(F''(a_{i}, b_{i})-F(a_{i}, b_{i})\big)\circ\lambda_{a_{i}, b_{i}}(x, y)\big)}\Big),
\end{eqnarray*}
where $`` \cdot "$ is the inner product.
\textcolor{mycolor}{Since $F$ is a small ternary quaternion polynomial, $\overrightarrow{F}$  is a vector with most elements equal to zero and the rest are $1$ or $-1$. As a consequence,   many products in  $\Big(\overrightarrow{F}\cdot \big(\sum\nolimits_{i= 1}^{|T|} \overrightarrow{\big(F''(a_{i}, b_{i})-F(a_{i}, b_{i})\big)\circ\lambda_{a_{i}, b_{i}}\big)}\Big)$ vanish. On the other hand, the polynomials $\lambda_{a_{i}, b_{i}}$, for $i=0, \ldots, |T|$, have many monomials with large coefficients and then the vectors $\overrightarrow{\lambda_{a_{i}, b_{i}}}$'s have many nonzero coefficients with large values belonging to $\mathbb{Z}_q$.  Therefore, the Euclidian norm of $\sum\nolimits_{i= 1}^{|T|} \overrightarrow{\big(F''(a_{i}, b_{i})-F(a_{i}, b_{i})\big)\circ\lambda_{a_{i}, b_{i}}}$ is very larger with respect to  $\Big(\overrightarrow{F}\cdot \big(\sum\nolimits_{i= 1}^{|T|} \overrightarrow{\big(F''(a_{i}, b_{i})-F(a_{i}, b_{i})\big)\circ\lambda_{a_{i}, b_{i}}\big)}\Big)$.} Hence,
$$\Big\|\sum\nolimits_{i= 1}^{|T|} \overrightarrow{\big(F''(a_{i}, b_{i})-F(a_{i}, b_{i})\big)\circ\lambda_{a_{i}, b_{i}}}\Big\|_2^2  > 2\Big(\overrightarrow{F}\cdot \big(\sum\nolimits_{i= 1}^{|T|} \overrightarrow{\big(F''(a_{i}, b_{i})-F(a_{i}, b_{i})\big)\circ\lambda_{a_{i}, b_{i}}\big)}\Big),$$
and the immediate result is
$$\|\overrightarrow{F''}\|_2 > \|\overrightarrow{F}\|_2.$$
Under the same computations and according to Equation (\ref{G''}), we have $\|\overrightarrow{G''}\|_2 > \|\overrightarrow{G}\|_2.$

Putting all pieces together,  a small reduction in the Hamming norm of $\overrightarrow{\rho(\tau'')}$, leads to a small reduction in the Hamming norm of $\overrightarrow{\rho(F''\circ \tau'')}$,  \textcolor{mycolor}{entailing a high  Euclidean norm for $\overrightarrow{F''}$ and $\overrightarrow{G''}$. This gives a considerable  enlargement for the hybrid norm of $(\overrightarrow{G''}, \overrightarrow{F''}, \overrightarrow{\rho(F''\circ \tau'')})$.} One concludes that  the private key  $(\overrightarrow{G}, \overrightarrow{F}, \overrightarrow{\rho(\gamma)})=(\overrightarrow{G}, \overrightarrow{F}, \overrightarrow{\rho( F\circ \vartheta)}) $ is mostly likely one of the nonzero shortest vectors in $\Lambda_{BQTRU}$.
 $\hfill \square $
\end{proof}
\textcolor{mycolor}{We remark that a consequence of the previous theorem is that recovering the private key in our cryptosystem can be reduced to solving an SVP in a lattice with hybrid norm.}
\subsection{Lattice based attack}
The lattice based attacks, presented by Coppersmith and Shamir \cite{Coppersmith}, are the most important attacks on the NTRU cryptosystem. The purposes of this type of attacks can be classified in two subclasses: an attempt at finding the private key and an attempt at obtaining the message. In this section, we present a lattice based attack  that attempts at obtaining the private key. We show that our cryptosystem is secure against lattice based attacks.

In Theorem \ref{original teorem} it is shown that the private key $(\overrightarrow{G}, \overrightarrow{F}, \overrightarrow{\rho(\gamma)}) $ is with high probability one of the nonzero shortest vectors, measured by the hybrid metric, in $\Lambda_{BQTRU}$. Solving SVP or CVP with the hybrid metric is a challenging problem and as far as we know no algorithm has been developed to handle them.
In the following we expand our BQTRU lattice to a lattice for which the SVP and CVP are measured using the Euclidean norm.

We select a small subset $\{c_0,c_1,\ldots, c_l\}$ of elements of $\mathbb{Z}_q$  such that each element $z \in \mathbb{Z}_q$ can be represented as $\sum\nolimits_{i = 0}^l  z_i\cdot c_i$, where $(z_0, z_1,\ldots, z_l)$  is a vector with small Euclidean norm. In this work, we consider $\{c_0,c_1,\ldots, c_l\}=\{1, 2,\ldots, 2^{l}\}$, where $ 2^{l}\leq q < 2^{l+1}$. Then, we expand the BQTRU lattice to the lattice which is generated by the rows of the following matrix
\begin{eqnarray*}
M_{\textrm{expanded}}=\left[\begin{array}{ccc}
                    qI_{4n^2} &  0 &   0 \\
                     \mathcal{\tilde{H}} &  I_{4n^2} &  0\\
                     \mathcal{D} &  0& I_{4(l+1)n^2}
                     \end{array}
                  \right],
\end{eqnarray*}
where
\begin{eqnarray*}
 \mathcal{D} =\left[\begin{array}{cccc}
                      \mathbf{c}\otimes D  & 0    & 0   & 0\\
                      0   & \mathbf{c}\otimes D  & 0   & 0\\
                      0 & 0  & \mathbf{c}\otimes D   & 0\\
                      0   & 0   & 0  & \mathbf{c}\otimes D
                       \end{array}\right], \qquad \mathbf{c}=\left[\begin{array}{cccc} c_0, c_1, \ldots, c_l\end{array}\right]^{Tr}.
\end{eqnarray*}
Then, we solve SVP in this lattice, $\Lambda_{\textrm{expanded}}$, that is measured under the Euclidean metric.
Finally, the points of the expanded lattice are mapped to the points of the original lattice as follows
\begin{eqnarray*}
\psi: \mathbb{Z}^{4n^{2}}\times  \mathbb{Z}^{4n^{2}}\times \overbrace{\mathbb{Z}_2^{n^{2}} \times \cdots\times\mathbb{Z}_2^{n^{2}}}^{4(l+1) ~\textrm{times}} & \rightarrow & \mathbb{Z}^{4n^{2}}\times  \mathbb{Z}^{4n^{2}}\times \overbrace{\mathbb{Z}_q^{n^{2}} \times\cdots\times \mathbb{Z}_q^{n^{2}}}^{4 ~\textrm{times}}\\
(r, s, B_{0,0}, \ldots, B_{0,l}, B_{1,0}, \ldots, B_{1,l}, \ldots, B_{3,0}, \ldots, B_{3,l}) &\mapsto & (r, s, \sum\limits_{i = 0}^l B_{0,i} c_i, \sum\limits_{i = 0}^l B_{1,i} c_i, \ldots, \sum\limits_{i = 0}^l B_{3,i} c_i).
\end{eqnarray*}
The binary representation  of any $z \in \mathbb{Z}_q$ is unique and all $z_i$ in the linear combination $z=\sum\nolimits_{i = 0}^l  z_i\cdot c_i$ are one or zero. Therefore, the Euclidean norm of $(z_0, z_1,\ldots, z_l)$ is small.  Hence, if we consider $\{c_0,c_1,\ldots, c_l\}=\{1, 2,\ldots, 2^{l}\}$, where $l=\lfloor \log_2 q \rfloor$, then any $z \in \mathbb{Z}_q$ can be written as a linear combination of them.  Since the binary representation of any $z \in \mathbb{Z}_q$ is unique, then the mapping $\psi$ is one to one and this mapping returns a unique solution.

We show that the equivalent vector for the private key $(\overrightarrow{G}, \overrightarrow{F}, \overrightarrow{\rho(\gamma)}) $ in the expanded lattice is its shortest vector under the Euclidean norm.
If we consider  the binary representation  of any point in $\rho(\gamma_i)$, for $i=0, \ldots, 3$, we have
\begin{eqnarray*}
\rho(\gamma_i)&=& \big(\gamma_i(a_1, b_1), \gamma_i(a_2, b_2), \ldots, \gamma_i(a_{n^2}, b_{n^2})\big)\\
&=& \Big(\sum\limits_{j = 0}^l {z_{i,1j} c_j}, \sum\limits_{j = 0}^l z_{i, 2j} c_j, \ldots, \sum\limits_{j= 0}^l {z_{i, n^{2}j} c_j}\Big).
\end{eqnarray*}
Also
\begin{eqnarray*}
\rho(\gamma_i) D&=&\Big(\sum\limits_{j = 0}^l {z_{i,1j} c_j}, \sum\limits_{j = 0}^l z_{i, 2j} c_j, \ldots, \sum\limits_{j= 0}^l {z_{i, n^{2}j} c_j}\Big) D\\
&=&\Big(\sum\limits_{j= 0}^l ( {z_{i,1j}},{z_{i,2j}}, \ldots ,{z_{i,{n^2}j}})\Big){c_j}D\\
&=&\big((z_{i,10}, z_{i,20}, \ldots, z_{i,{n^2}0}), (z_{i,11}, z_{i,21}, \ldots, z_{i,{n^2}1}), \ldots, (z_{i,1l}, z_{i,2l}, \ldots, z_{i,{n^2}l})\big) \left(\begin{array}{c}
c_0 D\\
c_1 D \\
\vdots\\
c_l D
 \end{array}
 \right)
\\
&=& (A_{i,0}, A_{i,1}, \ldots, A_{i,l}) \left(\begin{array}{c}
c_0 D\\
c_1 D \\
\vdots\\
c_l D
 \end{array}
 \right)\\
 &=&\sum\limits_{j= 0}^l {A_{i,j} c_j D}.
\end{eqnarray*}
Therefore,  $(\overrightarrow{G}, \overrightarrow{F}, A_{0,0}, \ldots, A_{0,l}, \ldots, A_{3,0}, \ldots, A_{3,l}) $ is the equivalent vector of the private key and
\begin{eqnarray*}
\lefteqn{\Big(\overrightarrow{-U} , \overrightarrow{F},A_{0,0}, \ldots, A_{0,l}, \ldots, A_{3,0}, \ldots, A_{3,l}\Big)M_{expand}}\\
&& = \Big(-q \overrightarrow{U}+\overrightarrow{F}\mathcal{\tilde{H}}+\big(\sum\limits_{i = 0}^l {A_{0,i} c_i D}, \ldots , \sum\limits_{i = 0}^l {A_{3,i} c_i D}\big), \overrightarrow{F}, A_{0,0}, \ldots, A_{0,l}, \ldots, A_{3,0}, \ldots, A_{3,l}\Big)\\
&& = \Big(-q \overrightarrow{U}+\overrightarrow{F}\mathcal{\tilde{H}}+\big(\rho(\gamma_0) D, \ldots, \rho(\gamma_3) D\big), \overrightarrow{F}, A_{0,0}, \ldots, A_{0,l}, \ldots, A_{3,0}, \ldots, A_{3,l}\Big)\\
&& = \Big(-q\overrightarrow{U}+\overrightarrow{F}\mathcal{\tilde{H}}+\overrightarrow{\rho(\gamma)} \mathfrak{D}, \overrightarrow{F},A_{0,0}, \ldots, A_{0,l},\ldots, A_{3,0}, \ldots, A_{3,l}\Big)\\
&&  =\Big(\overrightarrow{-q U+F\circ \tilde{H} +\gamma}, \overrightarrow{F},A_{0,0}, \ldots, A_{0,l}, \ldots, A_{3,0}, \ldots, A_{3,l}\Big)\\
&&  =\Big(\overrightarrow{G} , \overrightarrow{F}, A^0_0, \ldots, A_{0,0}, \ldots, A_{0,l}, \ldots, A_{3,0}, \ldots, A_{3,l}\Big).
\end{eqnarray*}
We conclude that, $(\overrightarrow{G}, \overrightarrow{F}, A_{0,0}, \ldots, A_{0,l}, \ldots, A_{3,0}, \ldots, A_{3,l}) \in \Lambda_{\textrm{expanded}}$. Next we show that it is one of the shortest vector in the expanded lattice.

In the following proposition, we consider the recommended parameters in Section \ref{Parameter Selection}  in such a way that the possibility of decryption failure be negligible. Choosing the prime number $q$ such that $n |(q-1)$ and  $q>24dp$ in the BQTRU cryptosystem ensures that the decryption never fails. Following the method of selecting parameters in NTRU (see Proposition $6.61$ of \cite{book}), for simplicity, we can remove $p$ in the last equation and assume  that $q\approx 24d$. Moreover,  $q=\alpha n+1$, for some $\alpha \in \mathbb{N}$. Hence, since $d\approx n^2/7$ in BQTRU, we consider a prime number $q$, where $q\approx 24n^2/7+1$ in the following proposition.
\begin{proposition}
Let $(n,p,q,d)$ be BQTRU parameters such that
$$d\approx n^2/7, \quad q\approx 24n^2/7+1, \quad |T|\leq n.$$
Then the Gaussian heuristic predicts that the equivalent vector of the private key in $\Lambda_{\textrm{expanded}}$, that is $(\overrightarrow{G} , \overrightarrow{F}, A_{0,0}, \ldots, A_{0,l}, \ldots, A_{3,0}, \ldots, A_{3,l})$, is the shortest nonzero vector in the $\Lambda_{expand}$ with maximum likelihood.
\end{proposition}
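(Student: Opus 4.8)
The plan is to use the Gaussian heuristic: estimate the length of the shortest nonzero vector of $\Lambda_{\textrm{expanded}}$ from its dimension and volume, then exhibit the image of the private key as a vector of strictly smaller Euclidean length, so that the heuristic identifies it as the (generically unique) shortest vector. First I would read off the two lattice invariants from the generator matrix $M_{\textrm{expanded}}$. Every block strictly above the block diagonal of $M_{\textrm{expanded}}$ vanishes (the middle column block is zero above $I_{4n^2}$ and the last column block is zero above $I_{4(l+1)n^2}$), so $M_{\textrm{expanded}}$ is block lower triangular with diagonal blocks $qI_{4n^2}$, $I_{4n^2}$, $I_{4(l+1)n^2}$; hence $\dim\Lambda_{\textrm{expanded}}=N:=(12+4l)n^2$ and $\mathrm{vol}\,\Lambda_{\textrm{expanded}}=|\det M_{\textrm{expanded}}|=q^{4n^2}$. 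Since $4n^2/N=1/(l+3)$, the Gaussian heuristic predicts the shortest vector length
\[
\sigma(\Lambda_{\textrm{expanded}})\;\approx\;\sqrt{\tfrac{N}{2\pi e}}\,\bigl(\mathrm{vol}\,\Lambda_{\textrm{expanded}}\bigr)^{1/N}\;=\;n\sqrt{\tfrac{12+4l}{2\pi e}}\;q^{1/(l+3)}.
\]

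Next I would bound the Euclidean length of $v:=(\overrightarrow{G},\overrightarrow{F},A_{0,0},\dots,A_{3,l})$, the image of the private key in $\Lambda_{\textrm{expanded}}$ that was exhibited just above. The components $f_i\in L_f$ and $g_i\in L_g$ are ternary with $2d$ nonzero coordinates each, so $\|\overrightarrow{F}\|_2^2=\|\overrightarrow{G}\|_2^2=8d$. For the gadget part, each $\gamma_i$ lies in $Q_q$ and therefore vanishes on $E\setminus T$, so $\rho(\gamma_i)$ is supported on the first $|T|$ coordinates; writing every nonzero entry of $\rho(\gamma_i)\in\mathbb{Z}_q$ in its at most $l+1$ binary digits shows that the $0/1$ blocks $A_{i,0},\dots,A_{i,l}$ together have Hamming weight at most $|T|(l+1)$, whence $\sum_{i=0}^3\|(A_{i,0},\dots,A_{i,l})\|_2^2\le 4|T|(l+1)$. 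Consequently $\|v\|_2^2\le 16d+4|T|(l+1)$.

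Finally I would substitute the stated parameters $d\approx n^2/7$, $|T|\le n$, $q\approx 24n^2/7+1$ and $l=\lfloor\log_2 q\rfloor$. From $2^l\le q<2^{l+1}$ we get $q^{1/(l+3)}<2$, so the expansion does not inflate the heuristic length: $\sigma(\Lambda_{\textrm{expanded}})=\Theta\!\bigl(n\sqrt{\log n}\bigr)$, whereas $\|v\|_2^2\le \tfrac{16}{7}n^2+4n(l+1)$ gives $\|v\|_2=\Theta(n)$. Thus $\|v\|_2<\sigma(\Lambda_{\textrm{expanded}})$, and the gap can be made explicit for the recommended range $n\ge 11$. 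By the standard reasoning used for NTRU-type lattices — a lattice that otherwise behaves like a random one of the same volume and dimension, but which contains a planted vector shorter than the generic value $\sigma$, has that vector as its shortest nonzero vector with overwhelming probability — one concludes that $v$ is, with maximum likelihood, the shortest nonzero vector of $\Lambda_{\textrm{expanded}}$.

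The main delicate points are reading the volume $q^{4n^2}$ correctly off $M_{\textrm{expanded}}$ (the binary-gadget block $\mathcal{D}$ and the public-key block $\mathcal{\tilde{H}}$ contribute nothing to the determinant because they sit below identity blocks) and verifying that $q^{1/(l+3)}$ stays bounded even though the expansion adds $4(l+1)n^2$ coordinates and thereby enlarges both $N$ and the heuristic length; this is exactly where the assumption $|T|\le n\ll n^2$ is used, since it keeps the gadget contribution $4|T|(l+1)$ below the $\Theta(n^2)$ term $16d$. Beyond those, the conclusion is inherently a heuristic, resting on the assumption that $\Lambda_{\textrm{expanded}}$ carries no unexpectedly short vectors other than those arising from the private-key structure.
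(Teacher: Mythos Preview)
Your proposal is correct and follows essentially the same approach as the paper: compute $\dim\Lambda_{\textrm{expanded}}=(4l+12)n^2$ and $\mathrm{vol}\,\Lambda_{\textrm{expanded}}=q^{4n^2}$ from the block lower-triangular form of $M_{\textrm{expanded}}$, bound the private-key vector by $\|v\|_2^2\le 16d+4|T|(l+1)$, and compare against the Gaussian-heuristic length $\sqrt{N/(2\pi e)}\,q^{1/(l+3)}$ after substituting the stated parameters. The paper carries out the same computation with explicit inequalities (and records that the comparison already holds for $n\ge 7$), whereas you phrase the final step asymptotically via $\sigma=\Theta(n\sqrt{\log n})$ versus $\|v\|_2=\Theta(n)$; your added justification of the determinant from the block structure and of the gadget weight from the support of $\rho(\gamma_i)$ on $T$ are welcome clarifications absent from the paper's argument.
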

\begin{proof}
The average length of the shortest nonzero vector in a lattice $\Lambda$ with dimension $n$  is  predicted by the Gaussian heuristic \cite{book} (see page 377) as
$$\lambda_1(\Lambda)=\sqrt{\frac{n}{2\pi e}}(\det(\Lambda))^{\frac{1}{n}}.$$
This prediction in the extended lattice with dimension $(4l+12)n^2$ and $\det(\Lambda_{\textrm{expand}})=q^{4n^2}$ is
\begin{eqnarray*}
\lambda_1(\Lambda_{\textrm{expand}})&=&\sqrt{\frac{(4l+12)n^2}{2\pi e}}q^{\frac{4}{(4l+12)}}.
\end{eqnarray*}
Since $ 2^{l}\leq q < 2^{l+1}$, we have
$$\lambda_1(\Lambda_{\textrm{expand}})\geq \sqrt{\frac{(2l+6)n^2}{\pi e}}2^{\frac{l}{(l+3)}},$$
where $l=\lfloor \log_2 q \rfloor$ and we assume $q\approx 24n^2/7+1\approx3.4n^2+1$. Therefore, we can paste $l\approx \log_2 (3.4n^2+1)$ and have
\begin{eqnarray*}
\lambda_1(\Lambda_{\textrm{expand}})\geq\sqrt{\frac{\big((2\log_2(3.4n^2+1)+6)n^2\big)\big((3.4n^2+1)^2\big)^{1/(\log_2(27.4n^2+8))}}{\pi e}}.
\end{eqnarray*}
On the other hand, if $\|\overrightarrow{\rho(\gamma)}\|_H =t'$, then
$$\|g , f, A_{0,0}, \ldots, A_{0,l}, \ldots, A_{3,0}, \ldots, A_{3,l}\|_2\leq \sqrt{16d+ t'(l+1)}.$$
Since $d\approx n^2/7$ and  $t'< 4|T|$, then
$$\|g , f, A_{0,0}, \ldots, A_{0,l}, \ldots, A_{3,0}, \ldots, A_{3,l}\|_2 \leq \sqrt{\frac{16n^2}{7}+ 4|T|(\log_2(3.4n^2+1)+1)}.$$
In general,  we consider $|T|\leq n$ and hence for $n\geq 7$,
 \begin{eqnarray*}
 && \sqrt{\frac{16n^2}{7}+ 4n(\log_2(3.4n^2+1)+1)}\\
 \leq && \sqrt{\frac{\big((2\log_2(3.4n^2+1)+6)n^2\big)\big((3.4n^2+1)^2\big)^{1/(\log_2(27.4n^2+8))}}{\pi e}}.
 \end{eqnarray*}
This proves that
$$\|g , f, A_{0,0}, \ldots, A_{0,l}, \ldots, A_{3,0}, \ldots, A_{3,l}\|_2 \leq \lambda_1(\Lambda_{\textrm{expand}}),$$
and if $n$ is  large, with high probability, $(\overrightarrow{G} , \overrightarrow{F}, A_{0,0}, \ldots, A_{0,l}, \ldots, A_{3,0}, \ldots, A_{3,l})$ is the shortest nonzero vector in the expanded lattice using Euclidean norm.
 $\hfill \square $
\end{proof}

The image of the vector in the proof of the previous proposition is the private key $(\overrightarrow{G} , \overrightarrow{F}, \overrightarrow{\rho(\gamma)})$  under the mapping $\psi$. Indeed, finding the approximate solution of SVP in the hybrid metric leads to expanding this lattice to a lattice with a higher dimension and finding SVP in the expanded lattice. Therefore, finding the private key in our cryptosystem with parameters $(n, p, q)$ is equivalent to finding a shortest vector in the expanded lattice with dimension $(4l+12)n^2=(4\log_2(3.4n^2+1)+12)n^2$.

In this case, since  $\dim(\Lambda_{\textrm{NTRU}})=8n^2$ and  $\dim(\Lambda_{\textrm{expand}})=12n^2+4n^2\log_2(3.4n^2+1)$, the dimension of $\Lambda_{\textrm{expand}}$ is at least  $(2+\log_2 n)$ times larger than  the dimension of NTRU lattice with parameters $(4n^2, q', p)$:
\begin{eqnarray*}
\frac{\dim(\Lambda_{\textrm{expand}})}{\dim(\Lambda_{\textrm{NTRU}})}&=&\frac{12n^2+4n^2\log_2(3.4n^2+1)}{8n^2}
=\frac{3}{2}+\frac{\log_2(3.4n^2+1)}{2}\\
&\geq&\frac{3}{2}+\frac{\log_2 3.4}{2}+\frac{\log_2 n^2}{2}\geq \frac{3}{2}+\frac{1.7}{2}+\log_2 n\geq 2+\log_2 n.
\end{eqnarray*}
For example when $n=7$, the dimension of the expanded lattice is approximately $2036$ while the dimension of $(4n^2, q', p)$-NTRU lattice is $392$. This increase in the dimension of the lattice forces high cost  to the attacker of the cryptosystem. We note that only the attacker,  for key recovery, faces with this higher dimensions caused  by the expansion. Since the computational complexity increases as a result of this expansion, this cryptosystem achieves a good security against lattice attacks.

We observe that we can reduce the size of the private keys $F$ and $G$ without violating the system security. This allows the use of larger messages. As a consequence, the security of the message is also increased.
\subsection{Brute force attack}
In our cryptosystem, the public parameters are $n$,  $ p$, $q$, $d_f$, $d_g$, $d_\phi$ and the public key is equal to  $ \tilde{H}=F^{-1} \circ G \pmod{J}$, where $J=\langle\sigma, q\rangle_{\mathbb{A}}$. For a brute-force search, Eve searches  exhaustively every possible key and tests whether it is the correct one or not. In order to break our cryptosystem, she searches all quaternions $F$, $G$ and polynomials $\sigma$ which satisfy the equation $F\circ\tilde{H}= G \pmod{\langle\sigma, q\rangle_{\mathbb{A}}}$.
Therefore, she may first enumerate all candidates for the ideal $Q=\langle\sigma, q\rangle$ that can be used for decryption. Then she tries each quaternion $F=f_0+f_1 i+f_2 j+f_3 k \in \mathbb{A}$ where $f_i\in L_f$, for $i=0, \ldots, 3$, until she finds a decryption key. That is, she finds the private key $F$ by verifying that $F\circ \tilde{H} \pmod J$ is a small ternary quaternion polynomial. Eve could also  find the private key by trying all possible quaternions   $G=g_0+g_1 i+g_2 j+g_3 k \in \mathbb{A}$ where $g_i\in L_g$, for $i=0, \ldots, 3$,  and $ G\circ H^{-1} \pmod J$ is a small  ternary quaternion.

The number of possible ideals $Q$ generated by  $\sigma(x,y)=\sum\nolimits_{i=1}^{|T|} q_i\lambda_{a_i, b_i}(x,y)$ is equal to
 $$\sum\limits_{i= 1}^n (q-1)^{i}{n^2\choose i}$$
and the number of possible small  ternary quaternions $F$ and $G$ is
\begin{eqnarray*}
\# L_F&=& {n^2\choose d_f}^4 {n^2-d_f\choose d_f}^4,\\
\# L_G&=&{n^2\choose d_g}^4 {n^2-d_g\choose d_g}^4.
\end{eqnarray*}
 On the other hand, $\rho(\gamma)=\rho(F\circ \vartheta)$ where $\vartheta=W\circ\sigma $ and $W \in \mathbb{A}_q^{\star}$. Therefore,  she needs to check $|\mathbb{A}_q^{\star}|$ quaternion $W$ to find $\rho(\gamma)$. Since $d_g$ is generally smaller than $d_f$, $L_G$ is smaller than $L_F$.
Hence, using a Meet-In-The-Middle  attack approach \cite{MITM}, the search space for recovering the private key $(\overrightarrow{G}, \overrightarrow{F}, -\overrightarrow{\rho(\gamma)}) $ is approximately
\begin{eqnarray*}
 {\textrm{Key} \choose \textrm{Security}}&=&|\mathbb{A}_q^{\star}| {n^2\choose d_g}^2 {n^2-d_g\choose d_g}^2\sum\limits_{i= 1}^n (q-1)^{i}{n^2\choose i},
\end{eqnarray*}
where $\mathbb{A}_q^{\star}$ is the set of invertible elements in $\mathbb{A}_q$.

Using another point of view, Eve can just try to recover the original message $M$ from the corresponding ciphertext $C$, instead of making a total break by finding the private key  $(\overrightarrow{G}, \overrightarrow{F}, -\overrightarrow{\rho(\gamma)}) $.
She simply tries  all  random quaternion $\Phi=\phi_0+\phi_1 i+\phi_2 j+\phi_3 k \in \mathbb{A} $ such that $\phi_i \in L_\phi$ for $i=0,1,2,3$ and  test  whether $C-p\tilde{H}\circ \Phi\pmod q$ is a small ternary quaternion or not.
If the obtained quaternion has small coefficients it might be the original  message $M$. Hence the search space for recovering the message (using a Meet-In-The-Middle  attack approach) is
$$ {\textrm{Message} \choose \textrm{Security}}= {n^2\choose d_\phi}^2 {n^2-d_\phi\choose d_\phi}^2= \dfrac{(n^2!)^2}{(d_\phi!)^4 \, (n^2-2d_\phi)!^2}.$$
\begin{table}[ht]
\caption{\textcolor{mycolor2}{Public key size, }parameter sets and their specified security levels with comparisons to  NTRU \cite{NTRU} and QTRU \cite{QTRU} cryptosystems (in each case $p=3$).}
\centering
\tiny
\renewcommand{\arraystretch}{1.5}
\begin{tabular}{|c||c|c|c|c|c|c|c|c|c|c|}
 \hhline{-||----------}
Sec. level & Cryptosystem & $n$ & $q$ & $d_f$ & $d_g$  & $d_\phi$ & Key Sec. & Msg. Sec. & Successful dec. & \textcolor{mycolor2}{Public key size (bit)} \\
  \hhline{=::==========}
Moderate & NTRU & $167$ & $128$  & $60$ & $20$ & $18$ & $2^{82}$ & $2^{77}$ & $0.9999292925$ & $1169$\\
  \hhline{-||----------}
Highest & NTRU & $503$ & $256$ & $215$ & $72$ & $55$ & $2^{285}$ & $2^{170}$ & $0.9999532447$ & $4024$ \\
 \hhline{-||----------}
Moderate & QTRU & $47$ & $59$ & $7$ & $6$ & $5$ & $2^{90}$ & $2^{80}$ & $0.9986364660$ &  $1128$ \\
  \hhline{-||----------}
Highest & QTRU & $149$ & $191$ & $22$ & $15$ & $12$ & $2^{263}$ & $2^{225}$ & $0.9999998808$ & $4768$ \\
  \hhline{-||----------}
Moderate & BQTRU & $7$ & $113$ & $7$ & $6$ & $6$ & $>2^{166}$ & $2^{92}$ & $0.9985784846$ & $1372$ \\
  \hhline{-||----------}
Highest & BQTRU & $11$ & $199$ & $17$ & $17$ & $13$ & $>2^{396}$ & $2^{212}$ & $0.9999995349$ & $3872$ \\
  \hhline{-||----------}
 \end{tabular}
 \label{table1}
 \end{table}
 The key and message security estimates for the presented parameter sets in  NTRU, QTRU and BQTRU cryptosystems are given in Table \ref{table1}. The provided lower bounds on the key security for BQTRU are given by assuming $|\mathbb{A}_q^{\star}|=1$.
According to this table, the brute-force attack on BQTRU with values greater than $n=11$ appears to be practically impossible.
\textcolor{mycolor2}{That is a necessary condition but not really a strong argument for security.}

\textcolor{mycolor2}{Moreover, the public key size of these cryptosystems is presented in Table \ref{table1}.  The public key in NTRU cryptosystem is a polynomial in $R_{q}=\mathbb{Z}_q[x]/\langle x^n -1 \rangle$, therefore its key size is equal to  $n\lceil\log_2 q\rceil$ bits. In QTRU cryptosystem, the public key includes four polynomials  in $R_{q}$ and then the public key size is equal to  $4n\lceil\log_2 q\rceil$ bits. Furthermore, the public key size of the proposed scheme is  $4n^2\lceil\log_2 q\rceil$ bits due to the four public polynomials $ h_i\in R'_{q}=\mathbb{Z}_q[x, y]/\langle x^n-1, y^n-1\rangle$, for $i=0, \ldots, 3$. According to the result of Table \ref{table1}, the public key size of BQTRU is smaller than NTRU and QTRU with higher security level.}

See Section \ref{Parameter Selection} for more details about the parameter's selection. These parameter sets,  which are obtained from our experiments, are chosen in such a way that the parameter $q$ be a prime number, $n |(q-1)$ and $q\approx\frac{3}{7}q'$, where $q'$ is equivalent parameter in $(4n^2, q', p)$-NTRU. Then, $(n, q, p)$-BQTRU has an equivalent level of security  as $(4n^2, q', p)$-NTRU.
We compare the combinatorial security of NTRU, QTRU and BQTRU in light of the meet-in-the-middle attack, with the prescribed parameter sets for two different security levels.

\subsection{The Gentry attack}
The parameter $n$ of the ring $R=\mathbb{Z}[x]/<x^n-1>$ in the original version of  NTRU cryptosystem, is chosen to be a prime number, because having $n$ prime maximizes the probability that the private key has an inverse with respect to a specified modulus. In 1999, Silverman has proposed taking $n$ to be a power of $2$ to allow the use of Fast Fourier Transforms when computing the convolution product of elements in the ring \cite{Silverman}. In the Gentry's attack \cite{Message Attack}, it is shown that choosing a composite number $n=cd$, especially one  with a small factor, significantly reduces the security of the NTRU cryptosystem.
%
Indeed, when $n=cd$ is a composite number,  a ring homomorphism  is used  to construct much smaller (and more easily reduceable) lattices whose shortest vectors contain at least some useful cryptanalytic information about the secret key.

In general, when $n$ is composite and $d$ is a nontrivial divisor, there exist a ring homomorphism
\begin{eqnarray*}
\mathbb{Z}[x]/(x^n-1)&\longrightarrow&  \mathbb{Z}[x]/(x^d- 1) \\
 f &\mapsto& f(d).
 \end{eqnarray*}
Then a $2d$-dimensional lattice $\Lambda(d)$ analog to the NTRU lattice can be  constructed. This lattice contains the vector $(f(d), g(d))$.
Therefore, at least when $d >\sqrt{n}$, $(f(d), g(d))$ is almost certainly the shortest vector in $\Lambda(d)$ for the same reasons that $(f, g)$ is almost certainly the shortest vector for the NTRU lattice.
Thus, significant partial information about the private key is recovered by reducing to a $ 2d$-dimensional lattice.
Alternatively, we can use $(f(d), g(d))$ to recover $(f, g)$.
We thereby obtain the private key without ever having to reduce a $2n$-dimensional
lattice in NTRU. Indeed, we can decrease the dimension of the NTRU lattice by about $2d$ in a similar fashion. This $2d$ reduction in lattice dimension should reduce LLL's running time by a factor exponential in $2d$.
To avoid the presented attack, $n$ should be chosen to be prime, or to have only large nontrivial factors \cite{Message Attack}.

We now consider this attack on BQTRU in the same context.
Indeed, for the bivariate case, the ring dimension  is $n^2$ which is composite ($n^2=n\cdot n$) and there are many suitable homomorphisms from the bivariate NTRU ring $\mathbb{Z}[x, y]/(x^n-1,  y^n-1)$ to a univariate NTRU ring $\mathbb{Z}[t]/(t^n -1)$;  suitable maps are all the maps $\phi_{r, s}: x\rightarrow t^r,  y\rightarrow t^s$ \cite{Orsini}. Therefore, the Gentry attack can be applied against BQTRU.
For considering this attack, we should map the  BQTRU lattice, from bivariate to univariate.
However, the problem is more difficult than in the NTRU case since the Euclidean part of the BQTRU lattice can pass to the quotient but this is not the case  for the Lagrange part $\mathfrak{D}$.
When we use a homomorphism to map $\lambda_{a, b}(x, y)$ to the univariate ring, then its image is not an univariate interpolator. In fact it is a product of two univariate interpolators \cite{Orsini}. Hence, while the the Euclidean part can be reduced from dimension $n^2$ to dimension $n$, the Lagrange part $\mathfrak{D}$ is reduced from $n^2$ to approximately $n^2/2$ dimension. As a consequence, solving the reduced lattice remains too hard and we  conjecture  that  BQTRU is practically secure against this attack. This problem is left for future study.

\section{Conclusions}~\label{concolusion}
In this paper, we present a NTRU-like  public key cryptosystem based on the quaternion algebras and bivariate polynomials.
We show that the multiplication operation in our cryptosystem can be done by means of matrix multiplication. Using Strassen's efficient method for $(2\times2)$ matrix multiplication, we obtain  $16/7$ times  faster operations in the  key  generation  and encryption  process of  our  cryptosystem compared to  NTRU  within comparable  parameters.

For designing a cryptosystem which is resistant against known lattice based attacks for low dimensions, we use bivariate polynomials ring and an ideal of this ring. This ideal is a lattice and the decryption process entails solving a CVP in this private lattice. Moreover, we show that the variance of the additive noise in this closest vector problem is small for an optimal decoder, like sphere decoder. Therefore, in the decryption process of our cryptosystem with the presented parameter sets,  a legitimate receiver using the private generator matrix, solves CVP  with affordable complexity and negligible error probability.

Due to the hybrid structure of the BQTRU lattice, recovering the private  key in our cryptosystem leads to solving SVP in the expanded lattice which is at least  $(2+\log_2 n)$ times larger than  the dimension of the NTRU lattice with equivalent parameters. This increasing in the dimension of the lattice force a high cost  to the attacker using a lattice attack in this cryptosystem. As a result of this expansion, this cryptosystem achieves high security against lattice attacks. Also we can reduce the size of the private keys without violating the system security and use larger message. Hence, message protection is feasible through larger polynomials increasing message security.
%

%

\end{document}